\newtheorem{theorem}{Theorem}[section]
\newtheorem{maintheorem}{Main Theorem}
\newtheorem*{theorem*}{Theorem}
\newtheorem{claim}[theorem]{Claim}
\newtheorem{sclaim}[theorem]{Subclaim}
\newtheorem{fact}[theorem]{Fact}
\newtheorem{lemma}[theorem]{Lemma}
\newtheorem{proposition}[theorem]{Proposition}
\newtheorem{corollary}[theorem]{Corollary}
\theoremstyle{convention}
\theoremstyle{definition}
\newtheorem{setup}{Setup}
\newtheorem{definition}[theorem]{Definition}
\newtheorem{conv}[theorem]{Convention}
\newtheorem{question}[theorem]{Question}
\newtheorem*{question*}{Question}
\theoremstyle{remark}
\newtheorem{remark}[theorem]{Remark}
\newtheorem{notation}[theorem]{Notation}
\newcommand{\lusim}[1]{\smash{\underset{\raisebox{1.2pt}[0cm][0cm]{$\sim$}}
{{#1}}}}
\def\l{{\langle}}
\def\r{{\rangle}}
\def\forces{\Vdash}
\def\l{\langle}
\def\r{\rangle}
\def\Succ{{\rm Succ}}
\newcommand{\s}{\subseteq}
\newcommand\diagonal{\bigtriangleup}
\newcommand{\dom}{\mathrm{dom}}
\newcommand{\cf}{{\rm cf}}
\newcommand{\crit}{\mathrm{crit}}
\newcommand\cat{{}^\curvearrowright}
 \newcommand{\one}{\mathop{1\hskip-3pt {\rm l}}}
\def\mathunderaccent#1#2 {\let\theaccent#1\skewfactor#2
\mathpalette\putaccentunder}
\def\putaccentunder#1#2{\oalign{$#1#2$\crcr\hidewidth
\vbox to.2ex{\hbox{$#1\skew\skewfactor\theaccent{}$}\vss}\hidewidth}}
\def\smallbox#1{\leavevmode\thinspace\hbox{\vrule\vtop{\vbox
   {\hrule\kern1pt\hbox{\vphantom{\tt/}\thinspace{\tt#1}\thinspace}}
   \kern1pt\hrule}\vrule}\thinspace}
\newcommand{\Add}{\mathrm{Add}}
\newcommand\ale[1]{\marginpar{Alejandro: #1}}
\newcommand\tom[1]{\marginpar{Tom: #1}}
\let\labeloriginal\label
\let\reforiginal\ref
\def\ref#1{\reforiginal{#1}}
\def\label#1{\labeloriginal{#1}}
\title[Non-normal]{Non-Normal Magidor-Radin types of forcings}
\author[Benhamou]{Tom Benhamou}
\address[Benhamou]{Department of Mathematics, Rutgers University, Piscataway (NJ) 08854-
8019, USA.}
\email{tom.benhamou@rutgers.edu}
\thanks{The research of the first author was supported by the National Science Foundation under Grant
No. DMS-2346680. The research of the second author was supported by the Department of Mathematics and the Center of Mathematical Sciences and Applications at Harvard University.}
\author[Poveda]{Alejandro Poveda}
\address[Poveda]{Harvard University, Department of Mathematics and Center of Mathematical Sciences and Applications, Cambridge (MA), 02138, USA}
\email{alejandro@cmsa.fas.harvard.edu}
\begin{document}

\begin{abstract}
    We develop the non-normal variations of two classical Pri\-kry-type forcings; namely,  Magidor and Radin forcings. We generalize the fact that the non-normal Prikry forcing is a projection of the extender-based to a coordinate of the extender to our forcing and the Radin/Magidor-Radin-extender-based forcing from \cite{CarmiMagidorRadin,CarmiRadin}. \\
 Then, we 
    show that both the non-normal variation of Magidor and Radin forcings can add a Cohen generic function to every limit point of cofinality $\omega$ of the generic club. Second,  we show that these phenomenon is limited to the cases where the forcings are not designed to change the  cofinality of a measurable $\kappa$ to $\omega_1$. Specifically, in the above-mentioned circumstances these forcings do not project onto any $\kappa$-distributive forcing. We use that to conclude that the extender-based Radin/Magidor-Radin forcing does not add fresh subsets to $\kappa$ as well. In the second part of the paper we focus on the natural non-normal variation of Gitik's forcing from \cite[\S3]{GitikNonStationary}. Our main result shows that this poset can be employed to change the cofinality of a measurable cardinal $\kappa$  to $\omega_1$ while introducing a Cohen subset of $\kappa$.
\end{abstract}
\maketitle
\section{Introduction}
Singular Cardinal Combinatorics is a prominent area of research in mo\-dern set theory. The field is primarily concerned with the properties of singular cardinals and its small successors (such as $\aleph_\omega$ and $\aleph_{\omega+1}$) and how these change across the set-theoretic multiverse. During the last fifty years, research in this field have yielded some of the most sophisticated technologies ever invented in set theory. A paradigmatic example are the so-called \emph{Prikry-type forcings}. The field was pioneered by Prikry \cite{Prikry} who provided the first example of a forcing poset changing the cofinality of a measurable cardinal to $\aleph_0$ without collapsing cardinals. However, it was  Magidor who  through a series of groundbreaking discoveries \cite{MagIdentity, MagSing, MagSingII} placed Prikry-type posets in the spotlight. Other major results employing these forcings were obtained by Cummings and Woodin \cite{CumGCH},  Gitik \cite{Gitikstrength, GitikNonStationary} and Foreman and Woodin \cite{ForWoo}.

\smallskip

By nowadays Prikry-type forcings count with a beautiful and extensive theory, mostly developed and accounted by Gitik in \cite{Gitik-handbook}. During the last few decades the abstract study of Prikry-type forcings has became a topic of central interest in set theory. Like any other important mathematical structure $\mathcal{M}$, the problem of classifying  the substructures of $\mathcal{M}$ is of high importance. In that direction, a problems which has elicited a major interest concerns the possible intermediate models of a generic extension by a Prikry-type forcing. This amounts to asking whether a given Prikry-type forcing $\mathbb{P}$ projects onto other Prikry-type posets or even onto other classical forcings -- the epitome of these latter being Cohen forcing.

The following is a succinct  account of what is known for Prikry forcing and its tree-like variations where these posets are defined using normal ultrafilters. First,  Gitik, Koepke, and Kanovei \cite{PrikryCaseGitikKanKoe} proved that any intermediate model of a generic extension by Prikry forcing with a normal ultrafilter must be a generic extension by Prikry forcing (with the same normal ultrafilter). In contrast, Koepke, Rasch and Schlicht \cite{MinimalPrikry}  construted a Tree Prikry forcing   yielding a minimal forcing extension -- this  phenomenon is akin to the classical \emph{Sacks property} of Sacks forcing.  
More recently, Benhamou and Gitik \cite{TomMaster}, and afterwards Benhamou, Gitik, and Hayut \cite{TomYairMoti}, proved  that the classical Tree Prikry forcing with non-normal ultrafilters can project onto a wide variety of $\kappa$-distributive forcings of cardinality $\kappa$ -- including $\Add(\kappa,1)$. In addition, that paper provides a non-trivial large-cardinal lower bound for this forcing to project onto every $\kappa$-distributive (even ${<}\kappa$-strategically closed) poset of cardinality $\kappa$. The results in \cite{TomYairMoti} are a sequel of a classical theorem of  Gitik saying that  Supercompact Prikry forcing can be arranged to project onto every $\kappa$-distributive forcing \cite[\S6.4]{Gitik-handbook} of cardinality $\kappa$. Finally, Benhamou and Gitik \cite{onCohenandPrikry} constructed an ultrafilter $U$ such that Prikry forcing with $U$ projects onto $\Add(\kappa,\kappa^+)$, showing that the class of distributive forcings onto which the Tree Prikry forcing projects exceeds those of cofinality $\kappa$.

In the context of Magidor/Radin-like forcings -- again, relative to normal ultrafilters -- our knowledge is way more narrow. Fuchs \cite{fuchs} proved that if $c,d$ are generic sequences for  Magidor forcing of \cite{ChangeCofinality} and $c\in V[d]$ then $c$ is almost contained in $d$. Benhamou and Gitik \cite{TomMaster,partOne,TomMotiII} generalized Gitik-Koepke-Kanovei's result and provided a full characterization of the intermediate models of a generic extension by the Mitchell version from \cite{MITCHELLHowWeak} of Magidor forcing  relative to a coherent sequence of measures  with $o(\kappa)<\kappa^+$. Namely, if  $G$ is generic for the Magidor/Radin forcing then  every intermediate model $V\subseteq M\subseteq V[G]$ is of the form $V[C]$ where $C$ is a subset of the generic club added by $G$. In the case where $o(\kappa)<\kappa$, models of the form $V[C]$ are generic for a finite iteration of Magidor-like forcings. 

\smallskip

The above results indicate that in the \emph{normal context} one should not expect a rich variety of intermediate extensions for a given Prikry-type forcing, while in the \emph{non-normal context} special constructions can provide a richer variety. This reflection invites to developing variations of the aforementioned forcings when the ultrafilters involved are non-normal.

\smallskip

In this paper we develop two new Prikry-type technologies -- the Mitchell-style non-normal Magidor forcing and the non-normal Radin forcing, respectively. Versions of these forcings appeared somewhat implicitly in Merimovich's works on Extender-Based Magidor/Radin forcing \cite{CarmiRadin,CarmiMagidorRadin}.

Let $\mathbb{M}[\vec{U}]$ (resp. $\mathbb{R}_u$) denote our non-normal version of the Mitchel-style Magidor (resp. Radin forcing) with respect to a generalized coherent sequence of ultrafilters\footnote{See Definition~\ref{GeneralizedCoherent}.} $\vec{U}$ (resp. a measure sequence $u$) of length $\omega_1$. These two posets will be respectively developed in \S\ref{Sec: non-normalMagidor} and \S\ref{SectionNonNormalRadin} of this paper. Later in  \S\ref{sec: AddingCohends} we shall employ them to demonstrate that they yield  garden-variety of intermediate generic extensions. The  mathematical meaning of this assertion is make precise by our first main theorem:

\begin{maintheorem}\label{thm: main 1}
It it consistent for both $\mathbb{M}[\vec{U}]$ and $\mathbb{R}_u$ to yield a club $C\s \kappa$ of cardinals with $\mathrm{otp}(C)=\omega_1$ such that every limit point of $\alpha\in C$ carries a Cohen generic function for $\Add(\alpha,1)$.
\end{maintheorem}
Therefore, in the above model, both $\mathbb{M}[\vec{U}]$ and $\mathbb{R}_u$ project onto $\Add(\alpha,1)$ for every limit point $\alpha\in C.$ 
This fact is optimal in the sense that one cannot hope for these forcings to 
project onto Cohen forcing $\Add(\alpha,1)$ for a singular cardinal $\alpha$ of uncountable cofinality in the eventual Prikry-type extension.  This conclusion will be inferred as a consequence of these posets  not adding fresh subsets to $\kappa$ (Corollary~\ref{cor: no fresh subsets}). Moreover, we show that the same conclusion is applicable Merimovich's Extender-Based Radin and  Magidor/Radin forcings from \cite{CarmiRadin,CarmiMagidorRadin} (see Corollary~\ref{cor: no fresh subsets carmi}).

\smallskip

So, is it possible for a Prikry-type forcing $\mathbb{P}$ to project onto $\Add(\kappa,1)$ when $\kappa$ is a measurable cardinal that changes its cofinality to $\omega_1$ after forcing with $\mathbb{P}$? We show that the answer  is affirmative but these requires fairly different methods to be established. Specifically, we show that the  non-normal variation of Gitik's forcing $\mathbb{P}(\kappa,\omega_1)$ from \cite{GitikNonStationary} does the job. Unlike the previously mentioned posets, $\mathbb{P}(\kappa,\omega_1)$    changes the cofinality of a measurable cardinal $\kappa$ with $o(\kappa)=\omega_1$ without introducing bounded subsets to $\kappa$. This poset is defined over a generic extension of $V$ by an Easton-supported (a.k.a., \emph{Gitik iteration}) of Prikry-type forcings (see \cite{GitikNonStationary}). 

Our main result in regards to $\mathbb{P}(\kappa,\omega_1)$ reads as follows:

\begin{maintheorem}\label{thm: main2}
    It is consistent for $\mathbb{P}(\kappa,\omega_1)$ to project onto $\Add(\kappa,1).$
\end{maintheorem}

In a recent paper \cite{KaplanGitik}, Gitik and Kaplan have proved that certain iteration of Prikry-type forcings of length $\kappa$ do not add fresh subsets to $\kappa$. In particular these results apply to the preparatory iteration in Theorem \ref{thm: main2}, which therefore do not add fresh sets to $\kappa$. 
\smallskip

The structure of the paper is as follows. We begin with \S\ref{sec: non-normal coherent} discussing two non-normal variations of the classical notion of coherent sequence of normal ultrafilters. This analysis is used later in \S\ref{Sec: non-normalMagidor} where we present  the Mitchel-styled Magidor forcing $\mathbb{M}[\vec{U}]$ with respect to a generalized coherent sequence $\vec{U}$. In this section we also show that this forcing can be recasted as a projection of Merimovich Extender-Based posets  \cite{CarmiRadin,CarmiMagidorRadin}. In \S\ref{SectionNonNormalRadin} we present the non-normal Radin forcing and in \S\ref{sec: AddingCohends} we prove \textbf{Main Theorem}~\ref{thm: main 1}. In \S\ref{sec: Gitik} we discuss the non-normal version of Gitik's forcing following \cite{GitikNonStationary} and prove \textbf{Main Theorem}~\ref{thm: main2}. The manuscript is concluded with \S\ref{sec: open questions} by drawing possible future directions and proposing a few open questions.

\begin{conv}
    Given $U$ a $\kappa$-complete ultrafilter over $\kappa$ we will tend to denote either by $M_U$ or $\mathrm{Ult}(V,U)$ the transitive collapse of the ultrapower of $V$ by $U$. Similarly, the induced elementary embedding from $V$ to $M_U$ will be denoted by $j_U.$ 
    When it comes to a forcing posets  we shall stick to the \emph{Israeli convention}; namely, when we write $p\leq q$ we will be meaning that $q$ is \emph{stronger} (i.e., more informative) than $p$. Given a regular cardinal $\kappa$ we shall denote by $\Add(\kappa,1)$ the \emph{Cohen forcing} at $\kappa$; namely, conditions in $\Add(\kappa,1)$ are partial functions $p\colon \kappa\rightarrow 2$ with $|p|<\kappa$ ordered by $\s$-extension. Whenever $U$ is a non-normal $\kappa$-complete over $\kappa$ we will denote by $\mathbb{T}_U$ the \emph{Tree-Prikry forcing} relative to $U$ (see \cite[\S1]{Gitik-handbook}).
\end{conv}

\section{Non-normal coherent sequences}\label{sec: non-normal coherent}
Let us fix $\kappa$ a measurable cardinal. Given two $\kappa$-complete (non-trivial) ultrafilters $U, W$ over $\kappa$ we shall say that \emph{$U$ is Mitchell below $W$} and write $U\triangleleft W$ whenever $U\in \mathrm{Ult}(V,W).$ Certainly, this is the natural generalization of the classical \emph{Mitchell order} $\triangleleft$ between normal measures \cite{MitchHandbook}.


We define two types of coherent sequences of ultrafilters; namely, \emph{generalized coherent sequences} (Definition~\ref{GeneralizedCoherent}) and \textit{almost coherent sequences} (Definition~\ref{Def:almostcoherentesequence}). 
\begin{definition}\label{GeneralizedCoherent}
    A sequence $$\vec{U}=\l U(\alpha,i)\mid \alpha<\kappa,i<o^{\vec{U}}(\alpha)\r^{\smallfrown}\l U(\kappa,i)\mid i<\gamma\r$$ is a \emph{generalized coherent sequence of length $\gamma$ with a top cardinal $\kappa$} if:
    \begin{enumerate}[(i)]
        \item  There is a function $\pi:\kappa\rightarrow\kappa$ such that for every $i<\gamma$ $[\pi]_{U(\kappa,i)}=\kappa$.
        \item For each $\alpha<\kappa$ and $\beta<o^{\vec{U}}(\alpha)$, $U(\alpha,\beta)$ is a $\pi(\alpha)$-complete ultrafilter over $\pi(\alpha)$. Also for  $i<\gamma$, $U(\kappa,i)$ is a $\kappa$-complete ultrafilter over $\kappa$.
        \item For each $\alpha<\kappa$ and  $i<o^{\vec{U}}(\alpha)$, $[\pi\restriction \pi(\alpha)]_{U(\alpha,i)}=\pi(\alpha)$.
        \item For every $\alpha\leq\kappa$ and $\beta<o^{\vec{U}}(\alpha)$, $$j_{U(\alpha,\beta)}(\vec{U})([\mathrm{id}]_{U(\alpha,\beta)})=\l U(\alpha,i)\mid i<\beta\r,$$ where $\vec{W}(\gamma)$ denotes the values of the sequence $\vec{W}$ at $\gamma$; i.e., $$\l W(\gamma,i)\mid i<o^{\vec{W}}(\gamma)\r.$$
        \end{enumerate}
        We say that $\vec{U}$ is \textit{special}, if \begin{enumerate}
            \item [(v)] whenever $U(\alpha,i)$ is non-normal,  $j_{U(\alpha,i)}(\vec{U})(\pi(\alpha))=\l\r$.
    
        \end{enumerate}
\end{definition}
\begin{remark}
   If all the measures in $\vec{U}$ are normal then one recovers the standard notion of a coherent sequence of measures \cite{MitchHandbook}.
\end{remark}

\begin{theorem}
    Assume the $\mathrm{GCH}$ holds. Suppose that $\l U_i\mid i<\gamma\r$ (with $\gamma<\kappa$) is a $\triangleleft$-increasing sequence of $\kappa$-complete ultrafilters over $\kappa$ (which are not necessarily normal). Then there is a generalized coherent sequence $\vec{U}$ of length $\gamma$ with a top cardinal $\kappa$ such that $U(\kappa,i)=\gamma$ for every $i<\gamma$. Moreover if
    $$\{U_i\mid i<\gamma\}\cup\{U_i^{\text{nor}}\mid U_i\text{ is not normal}\}$$
    are distinct ultrafilters, then we can ensure that the sequence is special.
\end{theorem}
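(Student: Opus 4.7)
The plan is to set $U(\kappa,i):=U_i$ for every $i<\gamma$ and then construct $\vec{U}\restriction\kappa$ together with the function $\pi\colon\kappa\to\kappa$ by gluing representing functions from the various $U_i$'s. For each $i<\gamma$ the Mitchell hypothesis $U_j\triangleleft U_i$ (for all $j<i$) yields $\vec{U}^{<i}:=\langle U_j:j<i\rangle\in\mathrm{Ult}(V,U_i)$, so I can pick a representing function $G_i\colon\kappa\to V_\kappa$ with $[G_i]_{U_i}=\vec{U}^{<i}$ and also pick $\pi_i\colon\kappa\to\kappa$ with $[\pi_i]_{U_i}=\kappa$ (such $\pi_i$ exists since $\kappa<j_{U_i}(\kappa)$). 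By \L{}o\'{s}, on a $U_i$-large set $A^0_i$ the pair $(G_i(\alpha),\pi_i(\alpha))$ forms an honest generalized coherent sequence of length $i$ with top cardinal $\pi_i(\alpha)$.

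Next, since the $U_i$'s are pairwise distinct $\kappa$-complete ultrafilters on $\kappa$ and $\gamma<\kappa$, a standard disjointification argument (for each ordered pair $i\neq j$ pick $T_{ij}\in U_i$ with $\kappa\setminus T_{ij}\in U_j$, then set $A_i:=A^0_i\cap\bigcap_{j\neq i}T_{ij}\in U_i$ using $\kappa$-completeness) yields a pairwise disjoint family $\{A_i:i<\gamma\}$ with $A_i\in U_i$. Glue by declaring $\pi(\alpha):=\pi_i(\alpha)$ and $F(\alpha):=G_i(\alpha)$ for $\alpha\in A_i$, and $\pi(\alpha):=0$, $F(\alpha):=\langle\rangle$ otherwise; then set $\vec{U}(\alpha):=F(\alpha)$ for $\alpha<\kappa$. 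Clause (i) is immediate since $[\pi]_{U_i}=[\pi_i]_{U_i}=\kappa$, and coherence (iv) at $\alpha=\kappa$ follows from $[F]_{U_i}=[G_i]_{U_i}=\vec{U}^{<i}$. Clauses (ii), (iii), and (iv) for $\alpha<\kappa$ all transfer down from $\mathrm{Ult}(V,U_i)$ via \L{}o\'{s}, since inside $M_{U_i}$ each $U_j$ ($j<i$) is a $\kappa$-complete ultrafilter on $\kappa$ already satisfying the corresponding clauses; here absoluteness of the ultrapower by $U_j$ between $V$ and the $\kappa$-closed $M_{U_i}$ (which uses GCH) is invoked.

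For the \emph{moreover} clause, unravelling specialness via (iii) and \L{}o\'{s} reduces it to ensuring, for each $i<\gamma$ with $U_i$ non-normal, that the set $N:=\{\gamma<\kappa : F(\gamma)=\langle\rangle\}$ lies in the normal projection $U_i^{\mathrm{nor}}$; the propagation to every $U(\alpha,k)$ in the sequence is then by downward \L{}o\'{s} through the top. Under the additional distinctness hypothesis, the disjointification applies to the enlarged family $\{U_i:i<\gamma\}\cup\{U_i^{\mathrm{nor}}: U_i\text{ non-normal}\}$ to produce pairwise disjoint sets $A_i\in U_i$ \emph{and} $B_i\in U_i^{\mathrm{nor}}$. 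Redefining $F$ to equal $G_i$ on $A_i$ and $\langle\rangle$ everywhere else (in particular on each $B_i$) retains $[F]_{U_i}=\vec{U}^{<i}$ while forcing $B_i\subseteq N$, whence $N\in U_i^{\mathrm{nor}}$ as desired.

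The main technical obstacle is producing a single $F$ whose ultrapower class is $\vec{U}^{<i}$ under every $U_i$ simultaneously, and (for the moreover clause) which further vanishes on a $U_i^{\mathrm{nor}}$-large set for each non-normal $U_i$; the distinctness hypothesis together with $\gamma<\kappa$ and $\kappa$-completeness is exactly what localizes these competing demands on disjoint pieces of a refined partition and prevents them from interfering.
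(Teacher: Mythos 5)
Your proposal is correct, and it takes a genuinely different route from the paper's argument. The paper proceeds by recursion on $\alpha<\gamma$, building a chain $\vec{V}^{(\alpha)}$ of coherent sequences of increasing length, with explicit bookkeeping requirements (conditions $(1)$--$(7)$) that ensure each new stage extends the previous ones on measure-one sets while reflecting coherency out of $M_{U_\alpha}$. You instead do a one-shot construction: pick a representing function $G_i$ for $\langle U_j:j<i\rangle$ and a $\pi_i$ for $\kappa$ under each $U_i$, disjointify along $A_i\in U_i$, glue into a single $F$ and $\pi$, and then verify all the clauses simultaneously by reflecting each $U_i$-level statement downward via \L{}o\'{s}. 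The crucial observation that makes this work --- and which you state a bit loosely --- is that the coherence clause $(iv)$ at $(\kappa,j)$ reduces to $[F]_{U_j}=\langle U_{j'}:j'<j\rangle$, which holds in $V$ by your choice of $F$ ($F=G_j$ on $A_j\in U_j$), and is absolute to $M_{U_i}$ (since $F$ is $V_\kappa$-valued, $\mathcal P(\kappa)^{M_{U_i}}=\mathcal P(\kappa)^V$, and $M_{U_i}$ is closed under $\kappa$-sequences); reflecting the universal statement ``for all $j<i$, $[F]_{U_j}=\langle U_{j'}:j'<j\rangle$'' through $j_{U_i}$ then yields $[F\restriction\pi(\alpha)]_{U(\alpha,j)}=\langle U(\alpha,j'):j'<j\rangle$ on a $U_i$-large set, which is exactly clause $(iv)$ at $(\alpha,j)$; clauses $(ii)$--$(iii)$ and the set-containment requirements that the paper records as its item $(6)$ fall out of the same reflection for free. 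Be a little more careful with the phrase ``$(G_i(\alpha),\pi_i(\alpha))$ forms an honest generalized coherent sequence'': $G_i(\alpha)$ is only the top fiber of ultrafilters at $\pi_i(\alpha)$, not a full coherent sequence (the lower entries come from $F$, not from $G_i$); what actually reflects are the individual clauses, not a blanket ``is coherent'' statement. Also, GCH does not seem to be doing any work in either proof --- $\kappa$-closure of $M_{U_i}$ and $\kappa$-completeness are all that is used. Your treatment of the ``moreover'' clause is right: specialness at $(\kappa,i)$ unpacks, via $\kappa=[\pi]_{U_i}$, to $\{\eta:F(\eta)=\langle\rangle\}\in U_i^{\mathrm{nor}}$, achieved by reserving disjoint $B_i\in U_i^{\mathrm{nor}}$ on which $F$ vanishes, and the statement propagates to lower levels by the same \L{}o\'{s} reflection.
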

\begin{proof}
Start by finding a sequence of sets $$\mathcal{A}=\{ A_\alpha\mid \alpha<\gamma\}\cup\{ A_\alpha'\mid U_\alpha\text{ is non-normal}\}$$ such that:
\begin{enumerate}
    \item For all $\alpha<\gamma$, $A_\alpha\in U_\alpha$ and $\min(A_\alpha)>\gamma$.
    \item If $A,B\in\mathcal{A}$ and $A\neq B$ then $A\cap B=\emptyset$.
    \item If $U_\alpha$ is non-normal then  $A_\alpha'\in U_{\alpha}^{\text{nor}}$.
\end{enumerate}
Such a sequence exists if $$\{U_i\mid i<\gamma\}\cup\{U_i^{\text{nor}}\mid U_i\text{ is not normal}\}$$
is a set of less than $\kappa$-many distinct $\kappa$-complete ultrafilter.
Otherwise, we just require that 
$\mathcal{A}=\{ A_\alpha\mid \alpha<\gamma\}$ and ignore $(3)$.
Next, find $\pi:\kappa\rightarrow\kappa$ such that for every $i<\gamma$, $[\pi]_{U_i}=\kappa$, and define by induction on $\alpha<\gamma$, $\vec{V}^{(\alpha)}$, such that:
\begin{enumerate}
    \item $\dom(V^{(0)})=\{\l\kappa,0\r\}$ and $V^{(0)}(\kappa,0)=U_0$.
    \item $\vec{V}^{(\alpha)}$ is a generalized coherent sequence of length $\alpha+1$ with a top cardinal $\kappa$.
    \item $\alpha<\beta<\omega_1\Rightarrow\vec{V}^{(\alpha)}\subseteq \vec{V}^{(\vec{\beta})}$ (as partial functions).
    \item For $\alpha>0$, $\dom(\vec{V}^{(\alpha)})\setminus \bigcup_{\beta<\alpha}\dom(V^{(\vec{\beta})})=B_{\alpha}\times\alpha\cup \{\l\kappa ,\alpha\r\}$, where $B_{\alpha}\subseteq A_{\alpha}$ and $B_{\alpha}\in U_{\alpha}$.
    \item $V^{(\alpha)}(\kappa,\alpha)=U_\alpha$.
    \item For every $(\eta,i)\in \dom(\vec{V}^{(\alpha)})$, $ B_i\cap\pi(\eta)\in V^{(\alpha)}(\eta,i)$
\end{enumerate}
In the moreover case we also require that:
\begin{enumerate}
    
    \item [(7)]  $A'_i\cap \pi(\eta)\in \pi^\eta_*(V^{(\alpha)}(\eta,i))$ for all $i$ such that $V^{(\alpha)}(\eta,i)$ is non-normal and where $\pi^\eta=\pi\restriction\eta$.
\end{enumerate}
The following claim says that it suffices to construct the sequence above.
\begin{claim}
    Let $(\vec{V}^{(\alpha)})_{\alpha<\gamma}$ be a sequence satisfying $(1)-(6)$ as above and let $\beta\leq\gamma$, then $\vec{V}=\bigcup_{\alpha<\beta}\vec{V}^{(\alpha)}$ is a coherent sequence of length $\beta$ with a top cardinal $\kappa$. Moreover, if $(7)$ holds than the sequence is special.
\end{claim}
\begin{proof}[\textit{Proof of claim.}] By (1),(3),(4), $\dom(\vec{V})=(\bigcup_{0<\beta<\alpha}B_\beta\times\beta)\cup \{\kappa\}\times \alpha$ and $\vec{V}\restriction \dom(\vec{V}^{(\alpha)})=\vec{V}^{(\alpha)}$. Hence (i)-(iii) are trivial.

    To see (iv), let $(\eta,\beta)\in \dom(\vec{V})$ and there is $\alpha<\gamma$ such that $(\eta,\beta)\in \dom(V^{(\alpha)})$, then $\beta\leq \alpha$ and $V(\eta',i)=V^{(\alpha)}(\eta',i)$ for every $\eta'\in B_{\beta'}$ and $i<\beta'$, for some $\beta'\leq\beta$. Hence for every $\rho\in B_\beta\cap\pi(\eta)$, $\vec{V}(\rho)=\vec{V}^{(\alpha)}(\rho)$. By $(6)$, $B_\beta\cap \pi(\eta)\in V(\eta,\beta)$ and therefore $j_{V(\eta,\beta)}(\vec{V})([id]_{V(\eta,\beta)})=j_{V(\eta,\beta)}(\vec{V}^{(\alpha)})([id]_{V(\eta,\beta)})$. Using the coherency of $\vec{V}^{(\alpha)}$, $$j_{V(\eta,\beta)}(\vec{V})([id]_{V(\eta,\beta)})=j_{V^{(\alpha)}(\eta,\beta)}(\vec{V}^{(\alpha)})([id]_{V^{(\alpha)}(\eta,\beta)})=$$
    $$=\l V^{(\alpha)}(\eta,i)\mid i<\beta\r=\l V(\eta,i)\mid i<\beta\r.$$
    Finally, to see (v), we note that if $V(\eta,\beta)$ is non-normal, then by $(7)$ $A'_\beta\cap \pi(\eta)\in \pi^\eta_*(V^{(\alpha)}(\eta,\beta))$ and since this set is disjoint from the domain of $\vec{V}^{(\alpha')}$ for every $\alpha'<\gamma$, it is disjoint from $\dom(\vec{V})$ and therefore $\pi(\eta)\notin\dom( j_{V(\eta,\beta)}(\vec{V}))$. We conclude that $j_{V(\eta,\beta)}(\vec{V})(\pi(\eta))=\l\r$.
\end{proof} 

Let us turn to the inductive definition of $
\vec{V}^{(\alpha)}$, let $\dom(\vec{V}^{(0)})=\{\l\kappa,0\r\}$ and $\vec{V}^{(0)}(\kappa,0)=U_0$. Now suppose that $V^{(\beta)}$ has been defined for $\beta<\alpha$. By the previous claim, letting $\vec{V}=\bigcup_{\beta<\alpha}\vec{V}^{(\beta)}$, we have that $\vec{V}$ is a generalized coherent sequence of length $\alpha$ with a top cardinal $\kappa$ and $\dom(\vec{V})=(\bigcup_{0<\beta<\alpha}B_\beta\times\beta)\cup \{\kappa\}\times \alpha$. By $(2)$, we let $\vec{V}^{(\alpha)}\restriction\dom(\vec{V})=\vec{V}$, and by $(5)$, we have to define $V^{(\alpha)}(\kappa,\alpha)=U_\alpha$. By $(4)$, it remains to define $B_\alpha\subseteq A_\alpha$ and $V^{(\alpha)}\restriction B_\alpha\times\alpha$. Towards this, since $\alpha<\kappa$ and since we started with a Mitchell increasing sequence of ultrafilters, we have $\l U_i\mid i<\alpha\r\in M_{U_{\alpha}}$, hence we can find a function such that $\l U_i\mid i<\alpha\r=[\eta\mapsto \l V^\eta_i\mid i<\alpha\r]_{V_{\alpha}}$. Also, 
\begin{enumerate}
    \item [(a)] $M_{U_{\alpha}}\models\vec{V}=(j_{U_{\alpha}}(\vec{V})\restriction\kappa)^{\smallfrown}\l U_i\mid i<\alpha\r\text{ is coherent}$.
    \item [(b)] For $i<\alpha$,  $M_{U_{\alpha}}\models j_{U_{\alpha}}(A_i)\cap \kappa=A_i\in U_i.$
    \item [(c)]
    $M_{U_\alpha}\models$ if $U_i$ is non-normal then  $j_{U_\alpha}(A'_i)\cap\kappa= A'_i\in \pi_*(U_i)=(j_{U_\alpha}(\pi)\restriction\kappa)_*(U_i)$.
    \item [(d)] For $i<\alpha$,  $M_{U_{\alpha}}\models [j_{U_\alpha}(\pi)\restriction\kappa]_{U_i}=[\pi]_{U_i}=\kappa=j_{U_\alpha}(\pi)(\kappa)$.
\end{enumerate}
  Reflecting this, we can find a set $B_{\alpha}\in V_\alpha$ such that for every $\eta\in B_\alpha$,
  \begin{enumerate}
      \item [(a)] $\vec{V}\restriction\pi(\eta)^{\smallfrown}\l V^\eta_0,...,V^\eta_\alpha\r\text{ is coherent with a top cardinal }\pi(\eta)$.
      \item [(b)] For $i<\alpha$, $ A_i\cap\pi(\eta)\in V^{\eta}_i$.
      \item [(c)] $A_i\cap\pi(\eta)\in \pi^\eta_*(V^\eta_i)$ if $V^{\eta}_i$ is non-normal.
      \item [(d)] For $i<\alpha$, $[\pi\restriction\pi(\eta)]_{V^\eta_i}=\pi(\eta)$. 
  \end{enumerate} 

For $\eta\in B_{\alpha}$ and $i<\alpha$, let $V^{(\alpha)}(\eta,i)=V^\eta_{i}$.

Let us check $(1)-(7)$. First $(1),(3),(4),(5)$ are trivial. Condition $(6),(7)$ follows from the induction hypothesis and conditions $(b),(c)$ above. It remains to check $(2)$, i.e. that $V^{(\alpha)}$ is a generalized coherent sequence: (i)-(iii) follows directly from the construction. 

To see (iv), let $(\eta,\beta)\in\dom(\vec{V}^{(\alpha)})$. If $(\eta,\beta)\in \dom(\vec{V})$, then $V^{(\alpha)}(\eta,\beta)=V(\eta,\beta)$. Note that $B_{\alpha}\cap \pi(\eta)\notin V^{(\alpha)}(\eta,\beta)$ (which are the only cardinals where we made changes below $\pi(\eta)$ in  $\vec{V}^{(\alpha)}$) and thus 
$$j_{V(\eta,\beta)}(\vec{V}^{(\alpha)})([id]_{V(\eta,\beta)})=j_{V(\eta,\beta)}(\vec{V})([id]_{V(\eta,\beta)}).$$
 By the induction hypothesis, we have that $$j_{V(\eta,\beta)}(\vec{V})([id]_{V(\eta,\beta)})=\l V(\eta,i)\mid i<\beta\r=\l V^{(\alpha)}(\eta,i)\mid i<\beta\r,$$ and so we are done.
If $(\eta,\beta)\in \dom(\vec{V}^{(\alpha)})\setminus \dom(\vec{V})$, then either $\eta\ni B_\alpha$, in which case,  by $(a)$, $\vec{V}\restriction \eta^{\smallfrown}\l V^\eta_i\mid i<\alpha\r$ is coherent. Again, $\vec{V}^{(\alpha)}\restriction\eta$ defers from $\vec{V}^{(\alpha)}\restriction\eta$ only on $B_{\alpha}\cap \eta$ which is measure $0$ with respect to $V(\eta,\beta)$, for every $i<\alpha$. Hence the ultrapower by $V(\eta,\beta)$ will still satisfy the coherency requirement in (iv). The case $\eta=\kappa$ is similar. 

Finally to see (v), use $(c)$ and note that for every $(\eta,\beta)$ for which $V^{(\alpha)}(\eta,\beta)$ is non-normal, $A'_\beta\cap \pi(\eta)\in \pi^\eta_*(V^{(\alpha)}(\eta,\beta))$. This set is disjoint from the domain of $\vec{V}^{(\alpha)}$ and therefore \begin{equation*}
    j_{V^{(\alpha)}(\eta,\beta)}(\vec{V}^{(\alpha)})(\pi(\alpha))=\emptyset.\qedhere
\end{equation*}
\end{proof}
We will need also a particular case of a generalized coherent sequence which we call \textit{almost coherent sequence}:
\begin{definition}\label{Def:almostcoherentesequence}
     An \emph{almost coherent sequence} is a sequence $$\vec{U}=\l U(\alpha,\beta)\mid \alpha\leq \kappa, \beta<o^{\vec{U}}(\alpha)\r$$
     such that:
     \begin{enumerate}
         \item $U(\alpha,0)$ is an $\alpha$-complete (non-necessarily normal) ultrafilter over $\alpha$.
         \item for $\alpha\leq\kappa$ and $0<\beta<o^{\vec{U}}(\alpha)$, $U(\alpha,\beta)$ is a normal measure on $\alpha$
     \item for every $\l \alpha,\beta\r\in\dom(\vec{U})$, $$j_{U(\alpha,\beta)}(\vec{U})\restriction \alpha+1=\vec{U}\restriction (\alpha,\beta)$$
     where $$\vec{W}\restriction\alpha+1=\l W(\gamma,\beta)\mid \gamma\leq \alpha, \ \beta<o^{\vec{W}}(\gamma)\r$$
     and $$\vec{W}\restriction (\alpha,\beta)=\vec{W}\restriction\alpha^{\smallfrown}\l W(\alpha,\gamma)\mid \gamma<\beta\r$$
     whenever $\vec{W}$ is an almost coherent sequence
      \end{enumerate} 
 \end{definition}
 In the above definition if $\beta=0$ then $j_{U(\alpha,0)}(\vec{U})\restriction\alpha+1=\vec{U}\restriction\alpha$. That is, we require that there are no measures on $\alpha$ in $j_{U(\alpha,0)}(\vec{U})$.
 

\begin{corollary}\label{Cor:almostCohereSequence}
    Let $\langle V_\alpha\mid \alpha<\omega_1\rangle$ a $\triangleleft$-increasing sequence be such that $V_\alpha$ is normal for all $\alpha>0.$ There is an almost coherent sequence $\vec{U}$ such that  $\vec{U}(\kappa,\alpha)=V_\alpha$ for all $\alpha<\omega_1$.  
\end{corollary}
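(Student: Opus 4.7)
The plan is to reprise the inductive construction in the preceding theorem, now for the Mitchell-increasing sequence $\langle V_\alpha : \alpha<\omega_1\rangle$. Since $\omega_1<\kappa$ and each $V_\alpha$ with $\alpha\geq 1$ is normal, $M_{V_\alpha}$ is closed under $\kappa$-sequences of $V$, so $\langle V_\beta : \beta<\alpha\rangle\in M_{V_\alpha}$ and can serve as top-level datum for reflection. I would build partial almost coherent sequences $\vec{V}^{(\alpha)}$ of length $\alpha+1$ with top cardinal $\kappa$ and $\vec{V}^{(\alpha)}(\kappa,\beta)=V_\beta$ for $\beta\leq\alpha$, then take $\vec{U}=\bigcup_{\alpha<\omega_1}\vec{V}^{(\alpha)}$.

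At stage $\alpha$ of the recursion, writing $\vec{V}^{(<\alpha)}=\bigcup_{\beta<\alpha}\vec{V}^{(\beta)}$, I would pick a representing function $\eta\mapsto\langle W_\eta^\beta : \beta<\alpha\rangle$ with $[\eta\mapsto W_\eta^\beta]_{V_\alpha}=V_\beta$ for each $\beta<\alpha$, together with a fresh set $A_\alpha\in V_\alpha$ disjoint from every previous $A_{\alpha'}$. Reflection through $V_\alpha$ then produces $B_\alpha\subseteq A_\alpha$ in $V_\alpha$ such that for each $\eta\in B_\alpha$: (i) $\vec{V}^{(<\alpha)}\restriction\eta^{\smallfrown}\langle W_\eta^\beta : \beta<\alpha\rangle$ is an almost coherent sequence with top cardinal $\eta$; (ii) $W_\eta^\beta$ is a normal measure on $\eta$ for every $\beta\geq 1$ (mirroring the normality of $V_\beta$); and (iii) $W_\eta^0$ is an $\eta$-complete ultrafilter on $\eta$. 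One then sets $\vec{V}^{(\alpha)}(\eta,\beta):=W_\eta^\beta$ for $\eta\in B_\alpha$ and $\beta<\alpha$, $\vec{V}^{(\alpha)}(\kappa,\alpha):=V_\alpha$, and inherits everything else from $\vec{V}^{(<\alpha)}$.

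The coherency verifications will follow the template of the theorem. At old points $(\eta,\beta)\in\dom(\vec{V}^{(<\alpha)})$ nothing is disturbed, because disjointness of the $A_{\alpha'}$'s forces $B_\alpha$ to have $V^{(<\alpha)}(\eta,\beta)$-measure zero. At new points $(\eta,\beta)$ with $\eta\in B_\alpha$ and $\beta<\alpha$, coherency is precisely what reflection delivers in (i). For $(\kappa,\beta)$ with $1\leq\beta\leq\alpha$, normality of $V_\beta$ makes $V_\beta$-almost every $\eta$ lie in $B_\beta$ with $o^{\vec{U}}(\eta)=\beta$, and the stage-$\beta$ representing functions yield $j_{V_\beta}(\vec{U})(\kappa,\gamma)=V_\gamma$ for $\gamma<\beta$.

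The hard part is coherency at $(\kappa,0)$ when $V_0$ is non-normal: one must ensure that $j_{V_0}(\vec{U})$ assigns no measure to $\kappa$, equivalently that $E:=\bigcup_{1\leq\alpha<\omega_1}B_\alpha\notin V_0^{\mathrm{nor}}$. The key input is that $V_0^{\mathrm{nor}}\neq V_\alpha$ for every $\alpha\geq 1$: indeed, $V_0\triangleleft V_\alpha$ places $V_0$ in $M_{V_\alpha}$, and for any function $g\in V_{\kappa+1}\subseteq M_{V_\alpha}$ with $[g]_{V_0}^V=\kappa$, the set $V_0^{\mathrm{nor}}=\{X\subseteq\kappa : g^{-1}(X)\in V_0\}$ is definable absolutely between $V$ and $M_{V_\alpha}$; so $V_0^{\mathrm{nor}}\in M_{V_\alpha}$, and the equality $V_0^{\mathrm{nor}}=V_\alpha$ would put $V_\alpha$ into its own ultrapower. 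Consequently each $A_\alpha$ (with $\alpha\geq 1$) can be shrunk to lie in $V_\alpha\setminus V_0^{\mathrm{nor}}$, forcing $B_\alpha\notin V_0^{\mathrm{nor}}$; $\kappa$-completeness of $V_0^{\mathrm{nor}}$ together with $\omega_1<\kappa$ then gives $E\notin V_0^{\mathrm{nor}}$, completing the construction.
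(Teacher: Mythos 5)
Your proposal follows the same template as the paper, which derives this corollary by invoking the preceding theorem's inductive reflection construction (with the one-line remark that $\{V_0^{\mathrm{nor}}\}\cup\{V_\alpha:\alpha<\omega_1\}$ are distinct, so specialness can be ensured). Your explicit argument that $V_0^{\mathrm{nor}}\neq V_\alpha$ for $\alpha\geq 1$ — compute $V_0^{\mathrm{nor}}$ inside $M_{V_\alpha}$ from $V_0$ and a representing $g\in V_{\kappa+1}\subseteq M_{V_\alpha}$, and observe that equality would put $V_\alpha$ in its own ultrapower — is correct and fills in a fact the paper only asserts.

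However, there is a gap in the handling of coherency at $(\eta,0)$ for $\eta\in B_\alpha$ with $\eta<\kappa$; your final paragraph addresses only $(\kappa,0)$. The requirement $j_{W_\eta^0}(\vec U)\restriction\eta+1=\vec U\restriction\eta$ unfolds to $\dom_1(\vec U)\cap\eta\notin(W_\eta^0)^{\mathrm{nor}}$, i.e.\ a statement about the \emph{normal derivative} of $W_\eta^0$, not about $W_\eta^0$ itself. Your ``old points are undisturbed'' argument only gives that $B_{\alpha'}\cap\eta$ is $W_\eta^\beta$-null (via the disjointness of the $A_{\alpha'}$'s), which is a different measure. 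Moreover, since you choose $A_\alpha$ only at stage $\alpha$, the reflection at stage $\alpha$ that produces $W_\eta^0$ cannot anticipate the sets $B_{\alpha'}$ chosen at stages $\alpha'>\alpha$, so there is nothing forcing $B_{\alpha'}\cap\eta$ to be $(W_\eta^0)^{\mathrm{nor}}$-null. The fix is the one the paper's theorem uses via its auxiliary sets $A_i'$: fix \emph{at the outset} a pairwise-disjoint family $\{A_\alpha:1\leq\alpha<\omega_1\}\cup\{A_0'\}$ with $A_\alpha\in V_\alpha$ and $A_0'\in V_0^{\mathrm{nor}}$, carry the single statement ``$A_0'\in V_0^{\mathrm{nor}}$'' (together with the $\kappa$-sequence $\langle A_\beta:\beta<\omega_1\rangle$, which lies in $M_{V_\alpha}$ by $\kappa$-closure since $\omega_1<\kappa$) through the reflection at every stage $\alpha$, obtaining $A_0'\cap\eta\in(W_\eta^0)^{\mathrm{nor}}$ for $\eta\in B_\alpha$. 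Then $\dom_1(\vec U)\cap\eta\subseteq\bigcup_{\alpha'}A_{\alpha'}\cap\eta$ is disjoint from $A_0'\cap\eta$, so it is $(W_\eta^0)^{\mathrm{nor}}$-null, as required. With this adjustment the argument goes through.
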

Note that $\{V^{\text{nor}}_0\}\cup \{V_i\mid i<\gamma\}$ are all distinct ultrafilters, and therefore we can make the coherent sequence special. The proof of the following proposition is a straightforward verification:
\begin{proposition}
    Suppose that $\vec{U}$ is a generalized coherent sequence of length $\gamma$ with a top cardinal $\kappa$, then for each $\alpha\leq \kappa$ and $i\leq o^{\vec{U}}(\alpha)$, $\l U(\beta,r)\mid \beta<\alpha, r<\min(o^{\vec{U}}(\beta),i)\r^{\smallfrown}\l U(\alpha,j)\mid j<i\r$ is a generalized coherent sequence of length $i$ with a top cardinal $\alpha$.
\end{proposition}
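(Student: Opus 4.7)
The plan is to verify the four defining clauses of Definition \ref{GeneralizedCoherent} for the restricted sequence
\[
\vec{W} := \l U(\beta,r)\mid \beta<\alpha,\ r<\min(o^{\vec{U}}(\beta),i)\r^{\smallfrown}\l U(\alpha,j)\mid j<i\r,
\]
using an appropriate restriction of the witnessing projection $\pi$ of $\vec{U}$.

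The case $\alpha = \kappa$ is immediate: $\vec{W}$ is then a truncation of the top block of $\vec{U}$ at height $i$, and $\pi$ itself continues to witness all of (i)--(iv). For $\alpha < \kappa$ I take the witnessing projection for $\vec{W}$ to be the restriction of $\pi$ to the relevant domain. With this choice, clauses (ii) and (iii) for $\vec{W}$ are inherited verbatim from those of $\vec{U}$: each measure $W(\beta, r) = U(\beta, r)$ retains its underlying cardinal and completeness degree, and clause (iii) is a statement about individual measures. Clause (i) for the top block of $\vec{W}$ is precisely clause (iii) of $\vec{U}$ applied at the pairs $(\alpha, j)$ with $j < i$.

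The crux is clause (iv). Fix $(\beta, r) \in \dom(\vec{W})$, so in particular $r < i$. Applying (iv) to $\vec{U}$ yields
\[
j_{U(\beta, r)}(\vec{U})([\mathrm{id}]_{U(\beta, r)}) = \l U(\beta, s) \mid s < r\r.
\]
By construction, $\vec{W}$ is obtained from $\vec{U}$ by truncating each column at height $\min(o^{\vec{U}}(\cdot), i)$; since $r < i$, the initial segment of length $r$ of the $\beta$-th column is unaffected by this truncation. A direct application of \L o\'s's theorem then transfers the identity to $\vec{W}$:
\[
j_{W(\beta, r)}(\vec{W})([\mathrm{id}]_{W(\beta, r)}) = \l W(\beta, s) \mid s < r\r,
\]
which is clause (iv) for $\vec{W}$.

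The only (mild) obstacle is the bookkeeping required to check that the column-truncation at height $i$ commutes with each ultrapower embedding on the initial segment relevant to clause (iv). Because $r < i$ is enforced by the definition of $\dom(\vec{W})$, the truncation is invisible to the relevant initial segment and coherence transfers without difficulty.
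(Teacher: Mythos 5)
The paper gives no proof and simply labels the proposition ``a straightforward verification''; your argument supplies exactly the verification that is implicitly intended. You correctly isolate the two easy blocks --- clauses (ii)--(iii) transfer measure by measure, and clause (i) for the new top block at $\alpha<\kappa$ is recovered from clause (iii) of $\vec{U}$ at the pairs $(\alpha,j)$ via the projection $\pi\restriction\pi(\alpha)$ --- and you put the weight where it belongs, on clause (iv). The mechanism you describe is right: since $\vec{W}=\vec{U}\restriction(\alpha,i)$ is definable from $\vec{U}$, $\alpha$, $i$, elementarity gives $j_{U(\beta,r)}(\vec{W})=j_{U(\beta,r)}(\vec{U})\restriction\bigl(j(\alpha),j(i)\bigr)$, and because the column of $j_{U(\beta,r)}(\vec{U})$ at $[\mathrm{id}]$ has length exactly $r<i\leq j(i)$, the truncation does not reach it.

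Two small things you leave implicit and could state for completeness: first, that $[\mathrm{id}]_{U(\beta,r)}$ actually lies in the domain of the restricted image, i.e.\ $[\mathrm{id}]_{U(\beta,r)}<j_{U(\beta,r)}(\alpha)$ --- this follows since $[\mathrm{id}]<j(\pi(\beta))\leq j(\alpha)$ when $\beta<\alpha$, and from $[\mathrm{id}]<j(\pi(\alpha))\leq j(\alpha)$ when $\beta=\alpha$; second, the phrase ``direct application of Łoś's theorem'' is better replaced by ``elementarity of $j_{U(\beta,r)}$ applied to the definable restriction map,'' which is the actual step. Neither is a gap, only an elision. The argument is correct and matches the route the authors evidently had in mind.
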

We denote the above generalized coherent sequence by $\vec{U}\restriction (\alpha,i)$.

\section{Non-normal Magidor forcing with a coherent sequence}\label{Sec: non-normalMagidor} 
In this section we  generalize the presentation of Magidor forcing due to Mitchell \cite{MITCHELLHowWeak} (see also \cite{Gitik-handbook}) which has been also studied by the first author and Gitik in a series of papers \cite{TomMaster,partOne,TomMotiII}.
\begin{proposition}
    Let $\vec{U}$ be a generalized coherent sequence with a top cardinal $\kappa$ and let $\l A_i\mid i<o^{\vec{U}}(\kappa)\r$ be a sequence of sets such that $A_i\in U(\kappa,i)$. Then for every $i<\kappa$,  
    $$\{\nu\in A_i\mid o^{\vec{U}}(\nu)=i, \ \forall j<i\   A_j\cap \pi(\nu)\in U(\nu,j)\}\in U(\kappa,i).$$
\end{proposition}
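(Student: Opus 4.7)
The plan is to prove the proposition by an application of {\L}o\'s' theorem to $j_{U(\kappa,i)}$, using the coherence of $\vec U$.

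Fix $i < o^{\vec U}(\kappa)$, set $U := U(\kappa,i)$, $j := j_U$, and let $M := \mathrm{Ult}(V,U)$. Denote by $B$ the set
$$B = \{\nu \in A_i \mid o^{\vec U}(\nu) = i \text{ and } \forall j' < i\ (A_{j'} \cap \pi(\nu) \in U(\nu, j'))\}.$$
By {\L}o\'s' theorem, to show $B \in U$ it suffices to verify that $[\mathrm{id}]_U \in j(B)$, i.e., that the element $\delta := [\mathrm{id}]_U \in M$ satisfies (in $M$) the three conditions defining $j(B)$: membership in $j(A_i)$, that $o^{j(\vec U)}(\delta) = i$, and that $j(A_{j'}) \cap j(\pi)(\delta) \in j(\vec U)(\delta, j')$ for each $j' < i$.

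First, $\delta \in j(A_i)$ holds by the very definition of $A_i \in U$. Second, by the coherence clause (iv) of Definition~\ref{GeneralizedCoherent},
$$j(\vec U)(\delta) = \langle U(\kappa, j') \mid j' < i\rangle,$$
which is a sequence of length $i$; hence $o^{j(\vec U)}(\delta) = i$. Third, note that $j(\pi)(\delta) = [\pi]_U = \kappa$ by condition (i), and for each $j' < i$ the same coherence equation gives $j(\vec U)(\delta, j') = U(\kappa, j')$. Since $A_{j'} \subseteq \kappa$ and $\kappa = \crit(j)$ in the sense that $j$ fixes $A_{j'}$ pointwise (so $j(A_{j'}) \cap \kappa = A_{j'}$), the required membership reduces to $A_{j'} \in U(\kappa, j')$, which is our hypothesis.

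Thus all three conditions hold in $M$ of the element $\delta$, so $\delta \in j(B)$ and consequently $B \in U$, as claimed. There is no serious obstacle: the argument is a direct unfolding of the defining clauses of a generalized coherent sequence. The only point to watch is the computation $j(\pi)([\mathrm{id}]_U) = \kappa$, which is precisely what clause (i) of Definition~\ref{GeneralizedCoherent} was designed to give, and the identification $j(A_{j'}) \cap \kappa = A_{j'}$, which uses $A_{j'} \subseteq \kappa$.
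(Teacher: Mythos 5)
Your proof is correct and follows the same strategy as the paper: verify $[\mathrm{id}]_{U(\kappa,i)}\in j_{U(\kappa,i)}(B)$ by unfolding clauses (i) and (iv) of the definition of a generalized coherent sequence, using $j(\pi)([\mathrm{id}])=\kappa$ and $j(A_{j'})\cap\kappa=A_{j'}$. If anything, your write-up is slightly more explicit than the paper's, which only spells out the third condition and leaves the checks $[\mathrm{id}]\in j(A_i)$ and $o^{j(\vec U)}([\mathrm{id}])=i$ implicit.
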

\begin{proof}
    For every $i<o^{\vec{U}}(\kappa)$, and $j<i$, $$j_{U(\kappa,i)}(A_j)\cap j_{U(\kappa,i)}(\pi)([\mathrm{id}]_{U(\kappa,i)})=A_j$$ and by coherency, $ U(\kappa,j)=j_{U(\kappa,i)}(\vec{U})([\mathrm{id}]_{U(\kappa,i)},j).$ It follows that 
    $$M_{U(\kappa,i)}\models \forall j<i, \ j_{U(\kappa,i)}(A_j)\cap j_{U(\kappa,i)}(\pi)([\mathrm{id}]_{U(\kappa,i)})\in j_{U(\kappa,i)}(\vec{U})([\mathrm{id}]_{U(\kappa,i)},j)$$
\end{proof}
\begin{notation}
    A \emph{basic pair} is a pair $(\alpha,A)$ where $$\textstyle A\in \bigcap_{i<o^{\vec{U}}(\alpha)}U(\alpha,i)=:\bigcap\vec{U}(\alpha).$$ By convention, if $o^{\vec{U}}(\alpha)=0$, then the sequence is empty, so universal statements about it will be vacuously true. For a basic pair $t=(\alpha, A)$, we denote $\kappa(t):=\alpha$ and $A(t):=A$.
\end{notation}
\begin{definition}
    Let $\vec{U}$ be a generalized coherent sequence of length $\gamma$ with a top cardinal $\kappa$. We define the condition of the \emph{non-normal-Magidor forcing} $\mathbb{M}[\vec{U}]$ as the poset consisting of conditions $\l t_1,...,t_n,\l \kappa,A\r\r$ such that:
    \begin{enumerate}
        \item each $t_i$ is a basic pair.
        \item  For each $\alpha\in A(t_i)\cup\{\kappa(t_i)\}, \  \pi(\alpha)>\kappa(t_{i-1})$.  
    \end{enumerate}
\end{definition}

\begin{definition}
    The order for $\mathbb{M}[\vec{U}]$ is defined by $$\l t_1,...,t_n,\l \kappa,A\r\r\leq \l s_1,...,s_m,\l \kappa,B\r\r$$ whenever there are indices $i_{0}:=0<1\leq i_1<...<i_n\leq m=:i_{n+1}$ such that  for each $1\leq r\leq n+1$
    \begin{enumerate}
        \item  $\kappa(s_{i_r})=\kappa(t_r)$,  $A(s_{i_r})\subseteq A(t_r)\setminus \pi^{-1}[\kappa(s_{i_r-1})+1]$.
        
        \item If $i_{r-1}<j<i_r$, then 
        \begin{enumerate}
            \item $\kappa(s_j)\in A(t_r)$.
            \item $o^{\vec{U}}(\kappa(s_j))<o^{\vec{U}}(\kappa(t_r))$.
            \item $A(s_j)\subseteq (A(t_r)\cap \pi(\kappa(s_j)))\setminus \pi^{-1}[\kappa(s_{j-1})+1]$. 
        \end{enumerate}
    \end{enumerate}
    In case $n=m$ (and therefore $i_r=r$) we write $p\leq^* q$.
\end{definition}
\begin{proposition}
    The order $\leq$ on $\mathbb{M}[\vec{U}]$ is transitive
\end{proposition}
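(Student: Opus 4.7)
The plan is to prove transitivity by composing the witnessing indices of the two given inequalities. Assume $p \leq q \leq r$ where $p = \l t_1, \ldots, t_n, \l\kappa, A\r\r$, $q = \l s_1, \ldots, s_m, \l\kappa, B\r\r$, and $r = \l u_1, \ldots, u_\ell, \l\kappa, C\r\r$, and let $(i_r)_{r\leq n+1}$ witness $p \leq q$ and $(j_s)_{s\leq m+1}$ witness $q \leq r$. I would propose $k_r := j_{i_r}$ as the witnessing sequence for $p \leq r$, and first check the formal requirements (strict monotonicity and the correct endpoints $k_0 = 0$, $k_{n+1} = \ell$) before verifying clauses (1) and (2) of the order.

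Clause (1) at each $k_r$ follows by chaining: $\kappa(u_{k_r}) = \kappa(s_{i_r}) = \kappa(t_r)$ from clause (1) of each given inequality, and the inclusion $A(u_{k_r}) \s A(t_r)\setminus \pi^{-1}[\kappa(u_{k_r-1})+1]$ then flows from chaining the corresponding set inclusions, noting that $k_{r-1} \leq k_r - 1$ so the exclusion required for $p \leq r$ is implied by the (already stronger) exclusion furnished by $q \leq r$.

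The substantive part is clause (2), verified at each middle index $k_{r-1} < j < k_r$. I would split into two cases. \textbf{Case A:} $j = j_s$ for some $s$. Strict monotonicity of $(j_s)$ forces $i_{r-1} < s < i_r$, so $s$ sits in a middle block for $p \leq q$; the three properties of $s_s$ relative to $t_r$ ($\kappa(s_s) \in A(t_r)$, $o^{\vec{U}}(\kappa(s_s)) < o^{\vec{U}}(\kappa(t_r))$, and $A(s_s) \s A(t_r) \cap \pi(\kappa(s_s))$) combine with $A(u_{j_s}) \s A(s_s)$ from clause (1) of $q \leq r$ to yield the three required facts for $u_j$ relative to $t_r$. \textbf{Case B:} $j \neq j_s$ for any $s$. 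Choose the unique $s$ with $j_{s-1} < j < j_s$; a quick check gives $i_{r-1} < s \leq i_r$. Clause (2) of $q \leq r$ at index $j$ places $\kappa(u_j)$ in $A(s_s)$ with Mitchell order below $o^{\vec{U}}(\kappa(s_s))$ and $A(u_j) \s A(s_s) \cap \pi(\kappa(u_j))$; then either clause (2) of $p \leq q$ at index $s$ (if $s < i_r$) or clause (1) of $p \leq q$ at $s = i_r$ lifts this data into $A(t_r)$. The exclusion $\pi^{-1}[\kappa(u_{j-1})+1]$ is inherited directly from $q \leq r$.

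The principal obstacle is purely combinatorial: the interleaving of the two index sequences produces several subcases, and one must track carefully that every $\pi^{-1}$-exclusion required for $p \leq r$ is implied by exclusions already available from $p \leq q$ or $q \leq r$. Once the case analysis is set up, each individual verification is a one-line application of the definitions together with transitivity of $\s$ and of the strict order on Mitchell ranks.
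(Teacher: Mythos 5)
Your proposal is correct and follows essentially the same route as the paper: you define the composite witnessing indices $k_r := j_{i_r}$ (the paper calls them $l_r$), and you verify clause (2) by the same two-case split on whether a middle index $j$ equals some $j_s$ or falls strictly between $j_{s-1}$ and $j_s$. The only cosmetic difference is that you record the explicit bookkeeping about where the $\pi^{-1}$-exclusions come from, which the paper leaves implicit, but the argument is the same.
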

\begin{proof}
    Suppose $$\l t_1,...,t_n,\l \kappa,A\r\r\leq \l s_1,...,s_m,\l \kappa,B\r\r\leq \l z_1,...,z_k,\l \kappa,C\r\r.$$
    By definition there are $$1\leq i_1<...\leq i_n\leq m\text{ and }1\leq j_1<j_2<...<j_m\leq k$$ witnessing the left and right inequalities (resp.). Define $l_r=j_{i_r}$. then $1\leq l_1<...<l_n\leq k$. Let us prove that $(1),(2a)-(2c)$ hold:
    \begin{enumerate}
        \item First, $\kappa(z_{l_r})=\kappa(z_{j_{i_r}})=\kappa(s_{i_r
        })=\kappa(t_r)$. Moreover, $$A(z_{l_r})\subseteq A(s_{i_r})\cap \pi^{-1}[\kappa(z_{l_r-1})+1]\subseteq A(t_r)\setminus \pi^{-1}[\kappa(z_{l_r-1})+1].$$
        \item Suppose that $l_{r-1}<j<l_r$. and let us split into cases:
        \begin{itemize}
            \item [Case 1:] There is $1\leq w\leq m$ such that $j=j_w$, in which case, $i_{r-1}<w<i_{r}$ and therefore
            \begin{enumerate}
                \item $\kappa(z_j)=\kappa(s_w)\in A(t_r)$.
                \item $o^{\vec{U}}(\kappa(z_j))=o^{\vec{U}}(\kappa(s_w))<o^{\vec{U}}(\kappa(t_r))$.
                \item $A(z_j)\subseteq A(s_w)\setminus \pi^{-1}[\kappa(z_{j-1})+1]\subseteq$\\ $\subseteq A(t_r)\cap \pi(\kappa(z_j))\setminus \pi^{-1}[\kappa(z_{j-1})+1]$.
            \end{enumerate}
            
                \item [Case 2.]  $j_{w-1}<j<j_w$, in which case, $i_{r-1}<w\leq i_{r}$ and therefore
             \begin{enumerate}
                \item $\kappa(z_j)\in A(s_w)\subseteq A(t_r)$.
                \item $o^{\vec{U}}(\kappa(z_j))<o^{\vec{U}}(\kappa(s_w))\leq o^{\vec{U}}(\kappa(t_r))$.
                \item $A(z_j)\subseteq A(s_w)\cap \pi(\kappa(z_j))\setminus \pi^{-1}[\kappa(z_{j-1})+1]\subseteq$\\ $\subseteq A(t_r)\cap \pi(\kappa(z_j))\setminus \pi^{-1}[\kappa(z_{j-1})+1]$.
            \end{enumerate}
        \end{itemize}
\end{enumerate}
\end{proof}
\begin{remark}
By condition $(2b)$ of the order on $\mathbb{M}[\vec{U}]$, given a set $A\in \cap \vec{U}(\alpha)$, we may assume always that $A=\biguplus_{j<o^{\vec{U}}(\alpha)} A^{(j)}$ , where $$A^{(j)}=\{\nu\in A\mid o^{\vec{U}}(\nu)=j\}.$$
\end{remark}
\begin{notation}
    Given $p=\l t_1,...,t_n,\l\kappa,A\r\r\in \mathbb{M}[\vec{U}]$, let \begin{enumerate}
        \item $l(p)=n$.
        \item $t_i(p)=t_i$, 
        \item $t_{n+1}(p)=\l \kappa,A\r$ and in particular $A(t_{n+1}(p))=A$.
        \item $p\restriction i+1=\l t_1,...,t_i\r$.
        \item $p\restriction [i+1,n+1]=\l t_{i+1},...,t_n,t_{n+1}\r$.
    \end{enumerate}
\end{notation}
The following are straightforward:
\begin{proposition}
    $\mathbb{M}[\vec{U}]$ is $\kappa$-centered and therefore $\kappa^+$-cc.
\end{proposition}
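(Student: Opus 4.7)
The plan is to show $\mathbb{M}[\vec{U}]$ is $\kappa$-centered by partitioning conditions according to their stems and showing that conditions with the same stem have a common lower bound. The $\kappa^+$-cc conclusion then follows by the standard pigeonhole argument.

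First I would define the stem of a condition $p=\l t_1,\ldots,t_n,\l\kappa,A\r\r\in \mathbb{M}[\vec{U}]$ to be the finite sequence $\mathrm{st}(p):=\l \kappa(t_1),\ldots,\kappa(t_n)\r\in \kappa^{<\omega}$. Since $\kappa$ is inaccessible (being measurable), the number of possible stems is $|\kappa^{<\omega}|=\kappa$. Let $C_s:=\{p\in\mathbb{M}[\vec{U}]\mid \mathrm{st}(p)=s\}$ for each $s\in\kappa^{<\omega}$, so that $\mathbb{M}[\vec{U}]=\bigcup_{s\in\kappa^{<\omega}}C_s$.

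Next I would prove that each $C_s$ is centered. Given finitely many conditions $p_1,\ldots,p_m\in C_s$ with common stem $s=\l \alpha_1,\ldots,\alpha_n\r$, write $p_k=\l (\alpha_1,A_1^k),\ldots,(\alpha_n,A_n^k),\l \kappa,A^k\r\r$ and define
\[
r:=\l (\alpha_1,B_1),\ldots,(\alpha_n,B_n),\l\kappa,B\r\r
\]
where $B_i:=\bigcap_{k=1}^m A_i^k$ for $1\leq i\leq n$ and $B:=\bigcap_{k=1}^m A^k$. Since each $U(\alpha_i,j)$ is $\alpha_i$-complete (in particular closed under finite intersection) we have $B_i\in\bigcap\vec{U}(\alpha_i)$, and similarly $B\in\bigcap\vec{U}(\kappa)$, so each $(\alpha_i,B_i)$ is a basic pair and $(\kappa,B)$ is a valid top block. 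Condition $(2)$ of the definition of $\mathbb{M}[\vec{U}]$ (the $\pi$-growth condition) is preserved under shrinking each set $A_i^k$ to $B_i\subseteq A_i^k$, so $r\in\mathbb{M}[\vec{U}]$. Finally, $r\leq^* p_k$ for every $k\leq m$ via the trivial index assignment $i_r:=r$: indeed, $\kappa(t_r(r))=\alpha_r=\kappa(t_r(p_k))$, and $A(t_r(r))=B_r\subseteq A_r^k=A(t_r(p_k))$, and this inclusion automatically gives the further inclusion $B_r\subseteq A_r^k\setminus \pi^{-1}[\alpha_{r-1}+1]$ because the latter set equals $A_r^k$ by condition~$(2)$ applied to~$p_k$. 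Hence $C_s$ is centered.

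The main step is verifying that the shrunk sets still satisfy the $\pi$-condition of a valid condition; but this is immediate since the condition only restricts each $A(t_i)\cup\{\kappa(t_i)\}$ from below and is therefore downward-absolute under subset shrinking, so there is no real obstacle here. Having established $\mathbb{M}[\vec{U}]=\bigcup_{s\in \kappa^{<\omega}}C_s$ with each $C_s$ centered, $\mathbb{M}[\vec{U}]$ is $\kappa$-centered. For $\kappa^+$-cc, given any family $\{p_\xi\mid \xi<\kappa^+\}\subseteq \mathbb{M}[\vec{U}]$, the map $\xi\mapsto \mathrm{st}(p_\xi)$ sends $\kappa^+$ into a set of size $\kappa$, so by the pigeonhole principle two distinct $p_\xi,p_\eta$ share a stem and hence belong to a common $C_s$, whence they are compatible. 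This completes the proof.
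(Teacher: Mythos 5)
Your proof is correct and is exactly the standard argument that the paper implicitly has in mind — the paper simply labels this proposition (together with the ones surrounding it) as ``straightforward'' and gives no proof. Partitioning conditions by the sequence of their $\kappa(t_i)$-values, noting that $|\kappa^{<\omega}|=\kappa$ by inaccessibility, and taking finite intersections of the measure-one sets (using $\kappa$-completeness of each $U(\alpha,j)$ and noting that the $\pi$-growth condition is downward-absolute under shrinking) is precisely the right approach, and your observation that $A_r^k\setminus\pi^{-1}[\kappa(t_{r-1})+1]=A_r^k$ for a well-formed condition is the key detail in verifying $\leq^*$.
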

\begin{proposition}
    For $p\in\mathbb{M}[\vec{U}]$, $(\mathbb{M}[\vec{U}]/p,\leq^*)$ is $\kappa(t_1(p))$-directed-closed.
\end{proposition}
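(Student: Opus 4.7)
The plan is to verify the proposition by the standard Prikry-type argument: take the coordinate-wise intersection of a directed family and check that each ingredient remains in the appropriate filter.

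Fix $p=\langle t_1,\ldots,t_n,\langle\kappa,A\rangle\rangle$, let $\delta<\kappa(t_1(p))$, and let $\{p_\beta:\beta<\delta\}$ be a $\leq^*$-directed family of conditions with each $p_\beta\geq^* p$. Because $\leq^*$ preserves length, each $p_\beta$ has the form $\langle s_1^\beta,\ldots,s_n^\beta,\langle\kappa,B^\beta\rangle\rangle$ with $\kappa(s_i^\beta)=\kappa(t_i)$, $A(s_i^\beta)\subseteq A(t_i)$, and $B^\beta\subseteq A$. The candidate common upper bound is
\[
q=\langle u_1,\ldots,u_n,\langle\kappa,C\rangle\rangle,\quad \kappa(u_i)=\kappa(t_i),\quad A(u_i)=\bigcap_{\beta<\delta}A(s_i^\beta),\quad C=\bigcap_{\beta<\delta}B^\beta.
\]
Once $q$ is shown to be a condition, the inequalities $q\geq^* p_\beta$ are immediate from the construction using the trivial witnessing indices $i_r=r$, and by transitivity (already established) we have $q\geq^* p$.

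The content is therefore to verify that $q\in\mathbb{M}[\vec{U}]$. For the top coordinate, each ultrafilter $U(\kappa,j)$ is $\kappa$-complete and $\delta<\kappa(t_1)<\kappa$, so $C\in\bigcap_{j<\gamma}U(\kappa,j)$. For the $i$-th basic pair $u_i$ with $i\geq 2$, each $U(\kappa(t_i),j)$ is $\pi(\kappa(t_i))$-complete, and the second clause in the definition of $\mathbb{M}[\vec{U}]$ applied to the conditions $p_\beta$ already forces $\pi(\kappa(t_i))>\kappa(t_{i-1})\geq\kappa(t_1)>\delta$, so the intersection $A(u_i)$ lies in $\bigcap\vec{U}(\kappa(t_i))$. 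The one remaining constraint on basic pairs, namely $\pi(\alpha)>\kappa(u_{i-1})$ for every $\alpha\in A(u_i)\cup\{\kappa(u_i)\}$, passes from each $p_\beta$ to $q$ since $A(u_i)\subseteq A(s_i^\beta)$.

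The only step needing slightly more care is the same completeness check at $i=1$: one needs $\delta<\pi(\kappa(t_1))$, which under the natural reading of the proposition amounts to using that $\pi(\kappa(t_1))\geq\kappa(t_1)$ at every index where basic pairs sit (and is otherwise vacuous since for $o^{\vec{U}}(\kappa(t_1))=0$ the filter $\bigcap\vec{U}(\kappa(t_1))$ imposes no completeness demand at all). This is the only place where one uses something beyond the combinatorics of the order, and it is the mild obstacle one must dispose of; once it is handled, $q$ is a legitimate condition and hence a $\leq^*$-upper bound of the family, establishing the claimed directed closure.
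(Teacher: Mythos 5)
Your coordinate-wise intersection is the right strategy, and the verifications that the candidate $q$ obeys the structural constraints of $\mathbb{M}[\vec{U}]$ and direct-extends each $p_\beta$ via the trivial witnessing indices are fine. The completeness bookkeeping is the only real content, and while you have correctly isolated where it is delicate, your proposed way of disposing of it is not correct. By Definition~\ref{GeneralizedCoherent}(ii) the ultrafilters $U(\kappa(t_i),j)$ are $\pi(\kappa(t_i))$-complete, not $\kappa(t_i)$-complete, so at coordinate $i$ the $\delta$-fold intersection needs $\delta<\pi(\kappa(t_i))$. For $i\geq 2$ the forcing constraints give $\pi(\kappa(t_i))>\kappa(t_{i-1})$; the further step $\kappa(t_{i-1})\geq\kappa(t_1)$, which you assert in passing, requires knowing that the stem cardinals increase, and that in turn uses $\pi(\alpha)\leq\alpha$ on the support of $\vec{U}$ (so $\kappa(t_j)\geq\pi(\kappa(t_j))>\kappa(t_{j-1})$). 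That deserves a sentence when $\pi$ is genuinely non-trivial.

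The crux is $i=1$, exactly as you flag: one needs $\delta<\pi(\kappa(t_1))$, and if $\delta$ is allowed to range up to $\kappa(t_1)$ this forces $\pi(\kappa(t_1))\geq\kappa(t_1)$. But that inequality is precisely what non-normality rules out: for a non-normal $U(\kappa,j)$ one has $[\pi]_{U(\kappa,j)}=\kappa<[\mathrm{id}]_{U(\kappa,j)}$, so $\pi$ is strictly decreasing on a measure-one set, and the cardinals one reflects down to inherit this, so $\pi(\kappa(t_1))<\kappa(t_1)$ is the expected situation. Your escape route is thus blocked rather than a mild obstacle. The fix is not to strengthen $\pi$ but to read the closure bound as $\pi(\kappa(t_1(p)))$-directed-closedness: that is the completeness level of the first filter, and it is exactly the analogue of Lemma~\ref{Factoring}(3), which asserts $\pi(\sigma_{u^p_0})$-directed-closedness (not $\sigma_{u^p_0}$-directed-closedness) for $\mathbb{R}_u$. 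With $\delta<\pi(\kappa(t_1(p)))$ your argument closes as written.
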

\begin{proposition}
    Given $p\in\mathbb{M}[\vec{U}]$ and $1\leq i\leq l(p)$
    $$\mathbb{M}[\vec{U}]/p\simeq \big(\mathbb{M}[\vec{U}\restriction \pi(t_i(p))]/p\restriction i+1\big)\times \big(\mathbb{M}[\vec{U}]/p\restriction[ i+1,n+1]\big)$$
\end{proposition}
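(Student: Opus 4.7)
The plan is to exhibit an explicit order-isomorphism via a split-and-concatenate construction at the position where $t_i(p)$ is matched. Given $q=\l s_1,\ldots,s_m,\l\kappa,B\r\r\leq p$ with witnessing indices $1\leq i_1<\cdots<i_n\leq m$ (so $\kappa(s_{i_r})=\kappa(t_r)$ for $1\leq r\leq n$), set
$$\phi(q):=(q_1,q_2),\quad q_1:=\l s_1,\ldots,s_{i_i}\r,\quad q_2:=\l s_{i_i+1},\ldots,s_m,\l\kappa,B\r\r,$$
with candidate inverse $\phi^{-1}(q_1,q_2):=q_1{}^{\smallfrown}q_2$.

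First I would verify well-definedness. Clauses $(2a)$-$(2c)$ of the order on $\mathbb{M}[\vec{U}]$ force each $s_j$ with $j\leq i_i$ to satisfy $\kappa(s_j)\leq\kappa(t_i)$ (with equality only for $j=i_i$), and $A(s_j)$ to be a subset of some $A(t_r)$ with $r\leq i$, hence to lie inside $\pi(\kappa(t_r))$. Consequently $q_1$ is a condition in $\mathbb{M}[\vec{U}\restriction\pi(t_i(p))]$ extending $p\restriction i+1$ with witnessing indices $i_1<\cdots<i_i$. For $q_2$, the shifted indices $r\mapsto i_{i+r}-i_i$ provide valid witnesses of $q_2\leq p\restriction[i+1,n+1]$ in $\mathbb{M}[\vec{U}]$. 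Order-preservation in both directions is then almost immediate, because clauses $(1),(2a)$-$(2c)$ are purely local to each segment between two consecutive matched indices: any refinement $q'\leq q$ factors as independent refinements $q'_1\leq q_1$ and $q'_2\leq q_2$, and conversely any pair of independent refinements recombines by concatenation.

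The main obstacle I anticipate is the compatibility at the interface when we concatenate. Concretely, one must verify that clause $(2)$ of the definition of $\mathbb{M}[\vec{U}]$ (the constraint $\pi(\alpha)>\kappa$ of the previous basic pair) holds across the join $s_{i_i}{}^{\smallfrown}s_{i_i+1}$, and, more generally, for any further refinement of $q_2$. But this is automatic: every basic pair $z$ occurring in $q_2$ (or in any refinement thereof) must have $\kappa(z)\in A(t_r)\cup\{\kappa(t_r)\}$ for some $r>i$, and $p\in\mathbb{M}[\vec{U}]$ already demands $\pi(\alpha)>\kappa(t_i)$ for every $\alpha\in A(t_r)\cup\{\kappa(t_r)\}$ with $r>i$. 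Thus the ``boundary'' datum $\kappa(t_i)$ nominally lost when we discard the top of $q_1$ from the split is implicitly retained in the sets carried by $q_2$, so $\phi$ is an order-isomorphism and the factorization holds.
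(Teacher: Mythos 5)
The paper does not supply an argument for this proposition (it is bundled under ``The following are straightforward''), so there is no in-text proof to compare against. Your proof is correct, and you have correctly isolated the only nontrivial content: that when one recombines $q_1{}^{\smallfrown}q_2$, the interface constraint ``$\pi(\alpha)>\kappa(t_i)$ for all $\alpha\in A(z_1)\cup\{\kappa(z_1)\}$'' costs nothing, because any basic pair $z$ occurring in a condition below $p\restriction[i+1,n+1]$ satisfies $A(z)\cup\{\kappa(z)\}\s A(t_r)\cup\{\kappa(t_r)\}$ for some $r>i$, and $p$ itself already guarantees $\pi(\alpha)>\kappa(t_{r-1})\geq\kappa(t_i)$ for such $\alpha$. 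The converse direction (that $\phi$ is well defined and that both $\phi$ and $\phi^{-1}$ are order preserving) reduces, as you say, to the locality of clauses $(1)$, $(2a)$--$(2c)$; one small fact worth stating explicitly is that the witnessing indices $i_1<\cdots<i_n$ for $q\leq p$ are \emph{unique}, since the $\kappa$-values along a condition are strictly increasing, which is what makes $\phi(q)$ well defined without any choice. Two cosmetic points: $q_1$ should be displayed with $s_{i_i}$ reinterpreted as the top part $\l\kappa(t_i),A(s_{i_i})\r$ of a condition in $\mathbb{M}[\vec{U}\restriction\pi(t_i(p))]$ (it is not a bare tail-less sequence), and the double use of the letter $i$ — once for the fixed coordinate in the proposition and once for the witnessing indices $i_1,\dots,i_n$ — makes $i_i$ needlessly hard to parse; a renaming would help. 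Neither affects correctness.
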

\begin{definition}
    Let $p=\l t_1,..,t_n,t_{n+1}\r\in\mathbb{M}[\vec{U}]$ and $\alpha\in A(t_i)$, for some $1\leq i\leq n+1$, define
    $$p^{\smallfrown}\l\alpha\r=\l t_1,...,t_{i-1},\l \alpha,A(t_i)\cap \pi(\alpha)\r, \l \kappa(t_i),A(t_i)\setminus \pi^{-1}[\alpha+1]\r,t_{i+1},...,t_{n+1}\r.$$
    We define recursively $p^{\smallfrown}\l\alpha_1,..,\alpha_n\r=(p^{\smallfrown}\l\alpha_1,...,\alpha_{n-1}\r)^{\smallfrown}\l\alpha_n\r$.
\end{definition}
The proof for the Prikry property and the strong Prikry property will be given in the next section for the non-normal Radin forcing, but the proof is completely analogous and therefore is omitted.
\begin{definition}
    A tree $T\subseteq [\kappa]^{<\omega}$ of height $n$, consisting of $\pi$-increasing sequence is called $\vec{U}$-fat if for every $t\in T$, such that $|t|<n$, there is $i<o^{\vec{U}}(\kappa)$ such that $\Succ_T(t)=\{\alpha\mid t^{\smallfrown}\alpha\in T\}\in U(\kappa,i)$. Suppose that $\vec{V}=\l\vec{v}_1,...,\vec{v}_n\r$ is a sequence of generalized coherent sequences, a sequence of trees $\vec{T}=\l T_1,...,T_n\r$ is called $\vec{v}$-fat if for each $1\leq i\leq n$, $T_i$ is $\vec{v}_i$-fat. 
\end{definition}
If for every $1\leq i\leq n$, the coherent sequences $\vec{v}_i$ above happens to be the coherent sequence $\vec{U}\restriction \pi(\kappa(t_i(p)))$ for some given condition $p$ of length $n$, then, we say that $T$ is fat below $p$.
For a tree $T$ of height $n$, we denote the set of maximal branches in $T$ by $mb(T)=\{t\in T\mid |t|=n\}$.
\begin{theorem}[The strong Prikry property]
    Let $p\in \mathbb{M}[\vec{U}]$ be any condition and $D\subseteq \mathbb{M}[\vec{U}]$ be dense open. Then there $p\leq^* p^*$ and and a sequence $\vec{T}=\l T_1,...,T_{l(p)+1}\r$ fat below $p^*$ such that for every sequence of branches $\l b_1,...,b_{l(p)+1}\r\in \prod_{1\leq i\leq l(p)+1}mb(T_i)$, $$p^{*\smallfrown}b_1^{\smallfrown}b_2...^\smallfrown b_n\in D$$   
\end{theorem}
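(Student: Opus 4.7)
The plan is to follow the standard strong Prikry argument for Magidor/Radin-type forcings, with care taken to accommodate the possibly non-normal ultrafilters $U(\kappa,i)$ in $\vec{U}$. By the factoring proposition above, $\mathbb{M}[\vec{U}]/p$ decomposes as a product of $l(p)+1$ factors, one per ``gap'' of $p$. A dense-open $D$ pulled back to such a product is dense-open coordinatewise, so the problem reduces to the case $l(p)=0$, in which $p=\l\kappa,A\r$ and we must construct a single $\vec{U}$-fat tree $T$ below a direct extension $p^*$ witnessing the conclusion for this factor; the general case then follows by assembling the per-gap trees.

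The main auxiliary ingredient is the Prikry property: for every forcing-language sentence $\sigma$ and every condition $q=\l\kappa,B\r$ there is $q\leq^* q^*$ deciding $\sigma$. I would prove this by induction on $o^{\vec{U}}(\kappa)$, colouring each $\alpha\in B$ by whether some direct extension of $q^{\smallfrown}\l\alpha\r$ (supplied by the inductive hypothesis applied to $\vec{U}\restriction \pi(\alpha)$) forces $\sigma$, forces $\neg\sigma$, or neither. Partitioning $B=\biguplus_{i<o^{\vec{U}}(\kappa)} B^{(i)}$ by $o^{\vec{U}}$-value and using $\kappa$-completeness of each $U(\kappa,i)$, one shrinks each $B^{(i)}$ to a monochromatic set $C_i\in U(\kappa,i)$; since $o^{\vec{U}}(\kappa)<\kappa$, the union $\bigcup_i C_i$ belongs to every $U(\kappa,j)$, and serves as the top set of $q^*$. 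Coherency (clause (iv) of Definition~\ref{GeneralizedCoherent}) is what guarantees that the inductive data attached to each fibre below $\alpha$ is pulled down consistently from the ultrapowers $M_{U(\kappa,i)}$.

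For the strong Prikry property, for each $n<\omega$ let $\sigma_n$ be the sentence ``some length-$n$ extension of the generic lies in $D$''. By applying the Prikry property $\omega$-many times and intersecting the resulting top sets (legal by $\kappa$-completeness of each $U(\kappa,i)$), one obtains a single $p\leq^* p^*$ deciding all $\sigma_n$ simultaneously; density forces some $n$ to be decided positively. Then build $T$ of height $n$ by recursion on depth: at a node $\tau$ of length $k<n$ below $p^*$, apply the Prikry property to the condition $p^{*\smallfrown}\tau$ and the statement ``every length-$(n-k)$ extension of the current condition lies in $D$'', partitioned by the $o^{\vec{U}}$-order of the next point added. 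This yields an index $i$ and a measure-one set $B_i\in U(\kappa,i)$ on which the statement is forced to hold; declaring $\Succ_T(\tau):=B_i$ witnesses $\vec{U}$-fatness at $\tau$, and iterating down to depth $n$ produces the required tree.

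The main obstacle is the simultaneous homogenization over all depths $k\leq n$ and all orders $i<o^{\vec{U}}(\kappa)$; this works because $o^{\vec{U}}(\kappa)<\kappa$ and each $U(\kappa,i)$ is $\kappa$-complete, so every intersection of fewer than $\kappa$ many measure-one sets arising in the recursion stays in $\bigcap \vec{U}(\kappa)$. Non-normality of the $U(\kappa,i)$ is harmless for the tree combinatorics: a $\vec{U}$-fat tree requires only that each $\Succ_T(\tau)$ lie in some $U(\kappa,i)$, not in a normal filter, and this is exactly what the $\kappa$-complete colouring argument produces. The only subtlety worth highlighting is that the colouring below $\alpha$ must be measurable in $M_{U(\kappa,i)}$ and reflect back to $V$, which again is ensured by coherency clause (iv).
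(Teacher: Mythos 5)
Your reduction to the one-block case via the factoring proposition matches the paper, but what follows is a genuinely different route, and as written it has two real gaps. The paper itself defers this proof to the fusion argument of \S\ref{SectionNonNormalRadin} for $\mathbb{R}_u$ (declared ``completely analogous'' for $\mathbb{M}[\vec{U}]$): build a $\leq^*$-decreasing sequence $\l p^n\r_{n<\omega}$ absorbing dependence on lower parts, take a fusion, and then uniformize the witnessing direct extensions by a diagonal intersection taken with respect to the appropriate below-$\pi$ ordering.

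The more serious gap is that no diagonal intersection appears anywhere in your sketch. In both your Prikry-property step and your tree-building step you produce, for each $\alpha$ in the top set, a direct extension $q_\alpha$ of $q^{\smallfrown}\l\alpha\r$; each $q_\alpha$ shrinks not only the block below $\pi(\alpha)$ --- that part is fine, $\kappa$-completeness of the $U(\kappa,i)$ absorbs the ${<}\kappa$ choices, as you say --- but also the \emph{top} set $A\setminus\pi^{-1}[\alpha+1]$. To amalgamate $\kappa$-many shrunken top sets into a single direct extension $q^*$ you must diagonally intersect, and since the $U(\kappa,i)$ are non-normal the ordinary $\diagonal_\alpha A_\alpha$ is useless. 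The correct replacement, used throughout the paper (see the $\mathbb{R}_u$ strong Prikry proof, and the remark after the Cummings--Woodin fact about replacing the usual diagonal intersection), is the $\pi$-diagonal $\{\beta\mid \forall\alpha<\pi(\beta)\ (\beta\in A_\alpha)\}$, which lies in $\bigcap\vec{U}(\kappa)$ precisely because $[\pi]_{U(\kappa,i)}=\kappa$ for every $i$ by Definition~\ref{GeneralizedCoherent}(i). Your remark that ``non-normality is harmless for the tree combinatorics'' misses the point: fatness of the tree is insensitive to normality, but amalgamation of direct extensions is exactly where normality would ordinarily be invoked, and its $\pi$-replacement is indispensable here. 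Separately, the ``deciding $\sigma_n$'' step does not close: the statement is not a usable forcing assertion, and ``density forces some $n$ to be decided positively'' does not follow --- density gives some $q\leq p^*$ in $D$ that is $\leq^*$-\emph{below} a one-point-extension tower $p^{*\smallfrown}\vec\alpha$, and openness of $D$ runs in the wrong direction to conclude $p^{*\smallfrown}\vec\alpha\in D$. What is missing is precisely the uniformization above: after the $\pi$-diagonal intersection, $p^{*\smallfrown}\vec b$ already \emph{refines} the witness $q_{\vec b}\in D$, and only then does openness of $D$ finish the job.
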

\begin{corollary}
    $\mathbb{M}[\vec{U}]$ preserves all cardinals.
\end{corollary}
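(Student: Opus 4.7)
My plan is to combine the $\kappa^+$-cc, the factoring of $\mathbb{M}[\vec U]/p$ in the proposition above, the $\leq^*$-directed closure, and the Prikry property (a formal consequence of the strong Prikry property just stated). The $\kappa^+$-cc handles preservation of $V$-cardinals $\geq\kappa^+$, so it remains to show that no regular $\mu\leq\kappa$ of $V$ is collapsed. It suffices to prove: for every $\alpha<\mu$, every $p\in\mathbb{M}[\vec U]$ and every name $\name f\colon\alpha\to\mu$, there is $q\leq p$ forcing $\mathrm{range}(\name f)$ into a set of cardinality $<\mu$ (as computed in some intermediate model where $\mu$ is still a cardinal). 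I would argue by induction on the top cardinal of $\vec U$, splitting on $\kappa(t_1(p))$.

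If $\kappa(t_1(p))>\mu$, then $(\mathbb{M}[\vec U]/p,\leq^*)$ is $\mu^+$-directed closed. For each $\beta<\alpha$ and each candidate $\eta<\mu$, the Prikry property produces a $\leq^*$-extension of $p$ deciding ``$\name f(\beta)=\check\eta$''; a $\leq^*$-meet over the $\mu$-many $\eta$'s yields $q_\beta\leq^* p$ with $q_\beta\Vdash\name f(\beta)=\check\gamma_\beta$ for some $\gamma_\beta<\mu$. A further $\leq^*$-meet over $\beta<\alpha$ gives $q\leq^* p$ with $q\Vdash\mathrm{range}(\name f)\subseteq\{\gamma_\beta\mid\beta<\alpha\}$, a $V$-set of size $\leq\alpha$; by regularity of $\mu$ in $V$, this set is bounded below $\mu$.

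If $\kappa(t_1(p))\leq\mu$, let $i$ be least with $\kappa(t_i(p))>\mu$ (such $i$ exists since $\kappa(t_{l(p)+1}(p))=\kappa\geq\mu$), and invoke the factoring proposition:
\[
\mathbb{M}[\vec U]/p\simeq Q_1\times Q_2,
\]
with $Q_1=\mathbb{M}[\vec U\restriction\pi(\kappa(t_{i-1}(p)))]/(p\restriction i)$ and $Q_2=\mathbb{M}[\vec U]/(p\restriction[i,l(p)+1])$. Since $Q_1$'s underlying coherent sequence has top cardinal $<\kappa$, the inductive hypothesis gives that $Q_1$ preserves every $V$-cardinal, in particular $\mu$. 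In $V^{Q_1}$, the forcing $Q_2$ still enjoys $\leq^*$-directed closure at $\kappa(t_i(p))>\mu$ and still satisfies the Prikry property (by standard downward absoluteness, using that $|Q_1|$ is small relative to $\kappa(t_i(p))$). Running the previous case inside $V^{Q_1}$ produces $q_2\in Q_2$ forcing $\mathrm{range}(\name f)$ into a $V^{Q_1}$-set of cardinality $<\mu$; since $\mu$ has $V^{Q_1}$-cardinality $\mu$, this means $\mathrm{range}(\name f)\neq\mu$ in the full extension, so $\name f$ is not surjective and $\mu$ is preserved.

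The main obstacle is the downward transfer of $Q_2$'s directed closure and Prikry property to $V^{Q_1}$; this rests on $|Q_1|$ being suitably small relative to $\kappa(t_i(p))$, which follows from GCH-style bounds on $|Q_1|=|\mathbb{M}[\vec U\restriction\pi(\kappa(t_{i-1}(p)))]|$ together with the structure of $\vec U$ and $\pi$. With these verifications in place, the induction closes and no $V$-cardinal is collapsed.
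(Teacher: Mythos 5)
Your overall strategy — $\kappa^+$-cc for cardinals above $\kappa$, then factoring together with $\leq^*$-closure and the Prikry property for cardinals $\mu\leq\kappa$, closed off by induction on the top cardinal of the coherent sequence — is the standard one and is precisely what the paper has in mind (the paper states the corollary without proof, as a package of the preceding propositions plus the strong Prikry property). Case~1 is fine, and the reduction in Case~2 to showing $Q_2$ adds no new subsets of $\mu$ over $V^{Q_1}$ is the right move.

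The one step that needs more care than you give it is the claim that $Q_2$ ``still satisfies the Prikry property (by standard downward absoluteness)'' in $V^{Q_1}$. The Prikry property is not an absoluteness phenomenon and does not transfer automatically from $V$ to a forcing extension: $V^{Q_1}$ has new dense open subsets of $Q_2$ that the $V$-Prikry property says nothing about. The correct mechanism is a L\'evy--Solovay argument: one must check that the relevant coordinates of $\vec U$ appearing in $Q_2$ have completeness strictly above $|Q_1|$ (this is where the structure of the factoring and the constraint $\pi(\alpha)>\kappa(t_{i-1}(p))$ on the permissible points in the stem of $Q_2$ enters), lift those measures to $V^{Q_1}$-ultrafilters, note that $Q_2$ sits $\leq^*$-densely inside the resulting $V^{Q_1}$-Magidor poset, and re-run the Prikry lemma there. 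Alternatively, one can avoid discussing the Prikry property of $Q_2$ in $V^{Q_1}$ altogether by integrating the strong Prikry property of the full $\mathbb{M}[\vec{U}]$ against the $\leq^*_{Q_2}$-closure: for a name $\tau$ for a subset of $\mu$, first take a $\leq^*_{Q_2}$-meet over $\beta<\mu$ of the $Q_2$-parts of the conditions produced by the strong Prikry property for the sets $D_\beta=\{q:q\text{ decides }\beta\in\tau\}$, then observe that the fat-tree decomposition already shows the decision factors through $G_1$. Either of these closes the gap; ``downward absoluteness'' does not.
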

Other properties of the classical Magidor forcing $\mathbb{M}[\vec{U}]$ can be generalized to our non-normal version -- this shall not be presented here. For more about these properties see \cite{TomMaster,partOne, TomMotiII}.

One important difference between this forcing and the usual normal Magidor forcing is that the generic sequence is not closed anymore:
\begin{definition}
    Let $G\subseteq \mathbb{M}[\vec{U}]$ be $V$-generic. The generic object added by $G$ is
    $$C_G:=\{\alpha\mid \exists p\in G \ \exists 1\leq i\leq l(p), \ \kappa(t_i(p))=\alpha\}.$$
\end{definition}
It is not hard to check that for every $A\in \bigcap\vec{U}(\kappa)$, $C_G\subseteq^* A$  and that $V[G]=V[C_G]$. However, this sequence is not normal:
\begin{proposition}
    Let $\vec{U}$ be a generalized coherent sequence coherent sequence of length $\gamma$ with a top cardinal $\kappa$, and let $G$ be generic for $\mathbb{M}[\vec{U}]$. For $0<\alpha<\gamma$,  
    $U(\kappa,\alpha)$ is normal if and only if there is $\xi<\kappa$ such that for every $\rho\in C_G\setminus\xi$ with $o^{\vec{U}}(\rho)=\alpha$, $\sup(C_G\cap \rho)=\rho$.
\end{proposition}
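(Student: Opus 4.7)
The plan is to first set up a clean dichotomy about the function $\pi$ introduced in Definition~\ref{GeneralizedCoherent}(i), then to isolate a single structural lemma about $\mathbb{M}[\vec{U}]$ that controls which ordinals can appear in $C_G$ below a given generic point, and finally to read off both directions of the equivalence.

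Since $U(\kappa,\alpha)$ is a $\kappa$-complete uniform ultrafilter on $\kappa$, one has $[\mathrm{id}]_{U(\kappa,\alpha)}\geq\kappa$ with equality precisely when $U(\kappa,\alpha)$ is normal. Combined with $[\pi]_{U(\kappa,\alpha)}=\kappa$, a standard ultrapower calculation yields that $U(\kappa,\alpha)$ is normal if and only if $\{\rho<\kappa:\pi(\rho)=\rho\}\in U(\kappa,\alpha)$, and that otherwise $\{\rho<\kappa:\pi(\rho)<\rho\}\in U(\kappa,\alpha)$. The structural lemma reads as follows: if $q\in\mathbb{M}[\vec{U}]$ has $\kappa(t_i(q))=\rho$, then condition~(2) in the definition of $\mathbb{M}[\vec{U}]$ forces $\kappa(t_{i-1}(q))<\pi(\rho)$, and $A(t_i(q))\subseteq\pi(\rho)$ because $A(t_i(q))\in\bigcap\vec{U}(\rho)$ and each $U(\rho,j)$ concentrates on $\pi(\rho)$. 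Passing to a common refinement of any two conditions in $G$, one then concludes that $\nu\in C_G\cap\rho$ entails $\nu<\pi(\rho)$, so that $\sup(C_G\cap\rho)\leq\pi(\rho)$ for every $\rho\in C_G$.

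Assuming the lemma, the implication ``non-normal $\Rightarrow$ no such $\xi$'' is immediate: for any $\xi<\kappa$ the set $\{\rho\in(\xi,\kappa):o^{\vec{U}}(\rho)=\alpha,\ \pi(\rho)<\rho\}$ belongs to $U(\kappa,\alpha)$ by the dichotomy, so a standard density argument -- shrinking the top large set of any given condition into this set and plugging in a point above $\xi$ -- produces cofinally many $\rho\in C_G$ of order $\alpha$ with $\pi(\rho)<\rho$; by the lemma $\sup(C_G\cap\rho)\leq\pi(\rho)<\rho$ for each such $\rho$. For the converse, if $U(\kappa,\alpha)$ is normal, density furnishes $q\in G$ whose top large set $A$ satisfies $A^{(\alpha)}\subseteq\{\rho<\kappa:\pi(\rho)=\rho\}$; put $\xi:=\kappa(t_{l(q)}(q))$. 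Every $\rho\in C_G\setminus\xi$ of order $\alpha$ is drawn from $A^{(\alpha)}$, hence $\pi(\rho)=\rho$; pick $q\leq q'\in G$ with $t_i(q')=(\rho,B)$. For each $\zeta<\rho$ the set $B\setminus(\zeta+1)$ remains in $\bigcap\vec{U}(\rho)$ (each $U(\rho,j)$ is $\rho$-complete and uniform on $\rho=\pi(\rho)$), and by the coherency proposition preceding this one the stratum $B^{(0)}$ is unbounded in $\rho$; picking $\nu\in B^{(0)}\cap(\zeta,\rho)$, the condition $(q')^{\smallfrown}\langle\nu\rangle$ forces $\nu\in C_G$, so genericity of $G$ gives $C_G\cap(\zeta,\rho)\neq\emptyset$ for every $\zeta<\rho$, i.e.\ $\sup(C_G\cap\rho)=\rho$.

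The main obstacle is the structural lemma: once the confinement $C_G\cap\rho\subseteq\pi(\rho)$ is available both directions collapse to short density computations. The lemma itself however is enforced directly by clause~(2) of the definition of $\mathbb{M}[\vec{U}]$ together with the observation that each measure in $\vec{U}(\rho)$ concentrates on $\pi(\rho)$; the only verification needed is that these two ingredients are preserved under arbitrary refinement, which is immediate from the definition of the order.
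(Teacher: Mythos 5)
Your proof is correct and follows essentially the same route as the paper: the dichotomy ``$U(\kappa,\alpha)$ normal iff $[\pi]_{U(\kappa,\alpha)}=[\mathrm{id}]_{U(\kappa,\alpha)}$ iff $\pi(\rho)=\rho$ on a $U(\kappa,\alpha)$-large set,'' the structural observation that clause~(2) of the order and the fact that each $U(\rho,j)$ lives on $\pi(\rho)$ together force $C_G\cap\rho\subseteq\pi(\rho)$ for every $\rho\in C_G$, and the two density arguments. The only cosmetic differences are that you state the structural bound $\sup(C_G\cap\rho)\leq\pi(\rho)$ as a standalone lemma and deliberately pick your one-point extensions from the order-zero stratum $B^{(0)}$ (which sidesteps the need to pre-shrink $B$ so that $B\cap\pi(\nu)\in\bigcap\vec{U}(\nu)$, a step the paper handles implicitly), whereas the paper argues inline and appeals to $C_G\subseteq^* A$ rather than phrasing the non-normal case as ``cofinally many $\rho$''; the mathematical content is the same.
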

\begin{proof}
    Suppose that $U(\kappa,\alpha)$ is normal, then $[\pi]_{U(\kappa,\alpha)}=[id]_{U(\kappa,\alpha)}$. Hence there is $A\in\cap\vec{U}(\kappa)$ such that for every $\rho\in A$, if $o^{\vec{U}}(\rho)=\alpha$ then $\pi(\rho)=\rho$. Hence there is $\xi<\kappa$ such that $C_G\setminus \xi\subseteq A$. Now suppose that $\rho\notin C_G\setminus \xi$ and $o^{\vec{U}}(\rho)=\alpha$ and let $p\in G$ be a condition such that $\kappa(t_i(p))=\rho$. The for every $\delta'<\pi(\rho)=\rho$, and for every $p\leq q$, there is $j$ such that $\kappa(t_j(q))=\rho$ and therefore there is  $\gamma\in A(t_j(q))\setminus \delta'$. Now $q^\smallfrown\l \delta'\r$ is a condition forcing that $\sup(C_G\cap \rho)\geq\delta'$. By density, there is such a condition in $G$ and since $\delta'<\rho$ was arbitrary, $\sup(C_G\cap\rho)=\rho$. If $U(\kappa,\alpha)$ is non-normal, then there is $\xi<\kappa$ such that for every $\rho\in C_G\setminus\xi$ with $o^{\vec{U}}(\rho)=\alpha$, $\pi(\rho)<\rho$. Now let $p\in C_G$ be any condition with $\rho=\kappa(t_i(p))$ for some $1\leq i\leq p$, then for every $j>i$, and every $\alpha\in A(t_j(p))$, $\pi(\alpha)>\kappa(t_{j-1})\geq\rho$, $p\Vdash C_G\cap \rho=C_G\cap \pi(\rho)$. Thus $\sup(C_G\cap\rho)\leq \pi(\rho)<\rho$.
\end{proof}
 \begin{lemma}
     Let $\alpha$ be a regular cardinal in $V$. If $p\Vdash \alpha\notin \mathrm{acc}(C_G)$, then $p\Vdash \cf(\alpha)=\alpha$. In particular all cofinalities below $\delta_0:=\min\{\nu\mid o^{\vec{U}}(\nu)>0\}$ are preserved.
 \end{lemma}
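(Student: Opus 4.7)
The plan is to argue by induction on the top cardinal $\kappa$ of the generalized coherent sequence $\vec{U}$. By a density argument I first strengthen $p$ to some $q\geq p$ deciding a bound $\xi<\alpha$ with $q\Vdash \sup(C_G\cap\alpha)\leq \xi$; it then suffices to show such $q$ forces $\cf(\alpha)=\alpha$. Let $i$ be maximal with $\kappa_i:=\kappa(t_i(q))\leq \xi$ (with $i=0$ if no such index exists). Since $q\Vdash \kappa_{i+1}\in C_G$ and $C_G\cap\alpha\subseteq \xi$, necessarily $\kappa_{i+1}\geq \alpha$. I then refine $A(t_{i+1}(q))$ to exclude $[\xi,\alpha)\cap\pi(\kappa_{i+1})$; this is legitimate because each measure in $\cap\vec{U}(\kappa_{i+1})$ is $\pi(\kappa_{i+1})$-complete, and the one configuration in which such shrinking would fail---namely $\alpha=\pi(\kappa_{i+1})$ with $o^{\vec{U}}(\kappa_{i+1})>0$---is itself incompatible with the hypothesis $q\Vdash C_G\cap\alpha\subseteq \xi$, since then measure-one-many Prikry extensions of $q$ would place new points into $[\xi,\alpha)$.

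Next I invoke the factorization $\mathbb{M}[\vec{U}]/q\simeq \mathbb{Q}_1\times\mathbb{Q}_2$ at position $i+1$, with $\mathbb{Q}_1=\mathbb{M}[\vec{U}\restriction \pi(\kappa_{i+1})]/q\restriction i+2$ and $\mathbb{Q}_2=\mathbb{M}[\vec{U}]/q\restriction[i+2,l(q)+1]$, and handle each factor separately. For $\mathbb{Q}_2$, the order $\leq^*$ is $\kappa_{i+2}$-directed-closed and the strong Prikry property then implies that $\mathbb{Q}_2$ adds no bounded subsets of $\kappa_{i+2}$; since $\alpha\leq \kappa_{i+1}<\kappa_{i+2}$, this factor preserves $\cf(\alpha)=\alpha$. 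For $\mathbb{Q}_1$ I separate two subcases: if $\alpha>\pi(\kappa_{i+1})$, then under $\mathrm{GCH}$ we have $|\mathbb{Q}_1|\leq \pi(\kappa_{i+1})^+\leq \alpha$, so $\mathbb{Q}_1$ is $\alpha$-cc and preserves the regularity of $\alpha$; if $\alpha<\pi(\kappa_{i+1})$, the shrinking of $A(t_{i+1}(q))$ guarantees that $q\restriction i+2$ forces (in $\mathbb{Q}_1$) that the generic club $C'$ added by $\mathbb{Q}_1$ satisfies $C'\cap\alpha\subseteq \xi$, so $\alpha\notin\mathrm{acc}(C')$, and the inductive hypothesis applied to the strictly smaller top cardinal $\pi(\kappa_{i+1})<\kappa$ yields $\cf(\alpha)=\alpha$ in the $\mathbb{Q}_1$-extension. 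Combining both factors yields $q\Vdash \cf(\alpha)=\alpha$.

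For the ``in particular'' assertion, for every regular $\alpha<\delta_0$ I intend to show that $\mathbf{1}\Vdash\alpha\notin\mathrm{acc}(C_G)$, after which the main clause applies. By $\kappa$-completeness of each $U(\kappa,j)$ and, more generally, by $\pi(\nu)$-completeness of $\cap\vec{U}(\nu)$ at any $\nu$ with $\pi(\nu)>\delta_0$, the interval $[0,\delta_0)$ is null for every relevant measure; shrinking the measure-one sets appearing in $\mathbf{1}$ (and in every later basic pair above $\delta_0$) yields a dense strengthening $\mathbf{1}'$ of $\mathbf{1}$ that admits only Prikry points $\geq\delta_0$, whence $\mathbf{1}'\Vdash C_G\cap\delta_0=\emptyset$. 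Density places $\mathbf{1}'$ into every generic filter, so $\alpha\notin\mathrm{acc}(C_G)$ is forced outright. The principal obstacle throughout is the inductive step with $\alpha<\pi(\kappa_{i+1})$: one must verify that the factorization faithfully transports $q\Vdash C_G\cap\alpha\subseteq \xi$ into the corresponding statement about the sub-generic of $\mathbb{Q}_1$, and that the recursion on the top cardinal is well-founded---which it is, since $\pi(\kappa_{i+1})<\kappa$ strictly.
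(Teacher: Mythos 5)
The paper states this lemma without supplying a proof, so I will assess your argument on its own terms. Your overall strategy---strengthen to a condition deciding a bound $\xi$, factor at the first stem coordinate $\kappa_{i+1}>\xi$, dispatch the tail via $\leq^{*}$-closure and the Prikry property, and handle the head via a chain condition or by induction on the top cardinal---is the natural one for this family of forcings, and most of the steps are sound.

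There is, however, a genuine gap at the very point where you invoke the factorisation. You set $i$ to be maximal with $\kappa_i\leq\xi$ and then factor at position $i+1$, but the factorisation proposition only applies when $1\leq i+1\leq l(q)$. If all stem coordinates of $q$ are $\leq\xi$ (equivalently $i=l(q)$, so $\kappa_{i+1}=\kappa$), then the decomposition $\mathbb{M}[\vec{U}]/q\simeq\mathbb{Q}_1\times\mathbb{Q}_2$ at position $i+1$ is simply undefined, and the whole inductive engine stalls at the top cardinal $\kappa$ without descending. This case does arise: nothing prevents $q\Vdash C_G\cap\alpha\subseteq\xi$ while $\kappa_{l(q)}\leq\xi<\alpha<\kappa$. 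The repair is to first extend $q$ by adding a new stem point $\nu\in A(t_{l(q)+1}(q))$ with $\pi(\nu)>\alpha$; such a $\nu$ exists since $\{\nu<\kappa:\pi(\nu)>\alpha\}\in U(\kappa,\beta)$ for every $\beta$ (because $j_{U(\kappa,\beta)}(\alpha)=\alpha<\kappa=[\pi]_{U(\kappa,\beta)}$), and the extended condition still forces $C_G\cap\alpha\subseteq\xi$. With this stem point in place one lands in the subcase $\alpha<\pi(\kappa_{i+1})$ and the recursion genuinely descends.

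Two smaller points. First, in the subcase $\alpha>\pi(\kappa_{i+1})$ you derive $\alpha$-cc from a cardinality estimate under $\mathrm{GCH}$; when $\alpha=\pi(\kappa_{i+1})^{+}$ this only yields $\alpha^{+}$-cc, which is insufficient. The correct appeal is to the $\pi(\kappa_{i+1})$-centredness of $\mathbb{Q}_1$, which gives $\pi(\kappa_{i+1})^{+}$-cc and hence $\alpha$-cc outright, with no $\mathrm{GCH}$. Second, your preliminary ``shrinking'' step is unnecessary: the hypothesis $q\Vdash C_G\cap\alpha\subseteq\xi$ together with the observation that every $\nu\in A(t_{i+1}(q))\cap(\xi,\alpha)$ would give a one-point extension $q\cat\nu$ forcing $\nu\in C_G\cap\alpha$ already implies $A(t_{i+1}(q))\cap(\xi,\alpha)=\emptyset$; the case you single out as problematic is only one of several impossible configurations, and in all remaining cases the ``shrinking'' removes at most the point $\xi$. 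Finally, the ``in particular'' sketch implicitly uses that $\pi(\nu)>\delta_0$ whenever $o^{\vec{U}}(\nu)>0$; this does follow from coherency (since $o^{j_{U(\nu,\beta)}(\vec{U})}([\mathrm{id}])=\beta>0$ forces $[\mathrm{id}]\geq j_{U(\nu,\beta)}(\delta_0)$, which fails if $\delta_0\geq\pi(\nu)$), but it should be stated.
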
 
The non-normal Magidor-Radin forcing appeared implicitly in the work of Merimovich \cite{CarmiMagidorRadin,CarmiRadin}. The analogy is the following: 
the tree-Prikry forcing appears as a projection of both the usual Gitik-Magidor Extender-Based Prikry forcing of \cite{Git-Mag} and its more modern presentation due to Merimovich \cite{Mer}. Next, we will show that the non-normal Magidor forcing $\mathbb{M}[\vec{U}]$ appears as a projection of Merimovich's Extender-Based Magidor/Radin forcing \cite{CarmiMagidorRadin}.

\smallskip

Given a Mitchell increasing sequence of (short) extenders $$\bar{E}=\l E_\xi\mid \xi<\gamma\r$$ a condition in the forcing $\mathbb{P}_{\bar{E}}$ has the form $$\l \l f_0,A_0,\bar{e}_0\r,...,\l f_n,A_n,\bar{e}_n\r,\l f,A,\bar{E}\r\r,$$ where
\begin{enumerate}
\item $\bar{e}^i=\l \bar{e}^i_j\mid j<o(\bar{e}^i)\r$ is an extender sequence with critical point $\kappa_i$.
    \item $\dom(f_i)\in P_{\kappa_i^+}(\mathfrak{D}_i)$,  $f_i:\dom(f_i)\rightarrow\mathfrak{R}_i^{<\omega}$, where $\mathfrak{D}_i$ is the set of all possible coordinates for the extender sequence $e^i$ and $\mathfrak{R}_i$ is the set of ranges for $f_i$ (which consists of extender sequences\footnote{An extender sequence is a sequence of the form $\xi=\l\rho\r^\smallfrown \l e_i\mid i<j\r$ where $e_i$ is a Mitchell increasing sequence of extenders. where $crit(e_0)\leq \rho<j_{e_0}(\crit(e_0))$.We denote by $\xi_0=\rho$ and $\bar{e}(\xi)=\l e_i\mid i<j\r$.} below $crit(e^i)$).
    \item $A_i$ is a $\dom(f_i)$-tree; namely, for every $\vec{\nu}\in A_i$, $$\Succ_{A_i}(\vec{\nu})\in\bigcap_{j<o(e^i)}e^i_j(\dom(f_i)).$$
    
\end{enumerate}
We refer the reader to Merimovich's paper \cite{CarmiMagidorRadin} for a complete account of the  
Magidor-Radin extender-based forcing. 

Recall that if $E$ is a $(\kappa,\lambda)$-extender and $\alpha<\lambda$, then $k_\alpha:M_{E_\alpha}\rightarrow M_E$ is an elementary embedding defined by $k_\alpha([f]_{E_\alpha})=j_E(f)(\alpha)$. The next proposition expresses that from a Mitchell-increasing sequence of extenders, one can derive many Mitchell-increasing sequences of ultrafilters.
\begin{proposition}
    Assume GCH. Suppose that $E$ is a $(\kappa,\lambda)$-extender on $\kappa$, and $U\in M_E$ is an ultrafilter on $\kappa$ such that $U=j_E(f)(\xi_1,...,\xi_n)$, then for any $\alpha$ with $\kappa,\xi_1,...,\xi_n\in \text{rng}(k_\alpha)$, $U\in M_{E_\alpha}$.
\end{proposition}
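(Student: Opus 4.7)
The plan is to exhibit an explicit element of $M_{E_\alpha}$ equal to $U$ by pulling the representation $U=j_E(f)(\xi_1,\ldots,\xi_n)$ back through the factor map $k_\alpha$. Using the hypothesis that $\xi_i\in\text{rng}(k_\alpha)$, set $\bar\xi_i:=k_\alpha^{-1}(\xi_i)\in M_{E_\alpha}$ and define
\[
\bar U:=j_{E_\alpha}(f)(\bar\xi_1,\ldots,\bar\xi_n)\in M_{E_\alpha}.
\]
The factorization $j_E=k_\alpha\circ j_{E_\alpha}$ together with the elementarity of $k_\alpha$ gives $k_\alpha(\bar U)=k_\alpha(j_{E_\alpha}(f))(\xi_1,\ldots,\xi_n)=j_E(f)(\xi_1,\ldots,\xi_n)=U$. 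The remaining task is to show that in fact $\bar U = U$ as objects.

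Next, I would analyse the behavior of $k_\alpha$ on $\kappa$ and on $P(\kappa)$. Since $\crit(j_{E_\alpha})=\kappa$, the factorization forces $k_\alpha\restriction\kappa=\mathrm{id}$. Combining this with $\kappa\in\text{rng}(k_\alpha)$ and the strict monotonicity of $k_\alpha$ on ordinals yields $k_\alpha(\kappa)=\kappa$, hence $\crit(k_\alpha)>\kappa$. A routine elementarity argument then shows that $k_\alpha(X)=X$ for every $X\in P(\kappa)^{M_{E_\alpha}}$: indeed $k_\alpha(X)\subseteq k_\alpha(\kappa)=\kappa$, and for each $\eta<\kappa$, $\eta\in X\iff k_\alpha(\eta)=\eta\in k_\alpha(X)$.

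The third key ingredient is the coincidence $P(\kappa)^{M_{E_\alpha}}=P(\kappa)^V$. Standard properties of $(\kappa,\lambda)$-extenders (together with GCH, which controls the size of $P(\kappa)$) guarantee $V_{\kappa+1}\subseteq M_E$, whence $P(\kappa)^V=P(\kappa)^{M_E}$. Since $k_\alpha$ restricts to an elementary bijection $P(\kappa)^{M_{E_\alpha}}\to P(\kappa)^{M_E}$ that was just shown to be the identity, all three power sets coincide. Finally, for every $X\in P(\kappa)^V=P(\kappa)^{M_{E_\alpha}}$ we have $X\in \bar U\iff k_\alpha(X)\in k_\alpha(\bar U)\iff X\in U$, so $\bar U=U$, and therefore $U\in M_{E_\alpha}$.

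The main obstacle I anticipate is the bookkeeping around the factor map: pinning down $\crit(k_\alpha)$ from the hypothesis $\kappa\in\text{rng}(k_\alpha)$, and justifying $P(\kappa)^{M_{E_\alpha}}=P(\kappa)^V$, which is precisely where GCH enters to keep $|P(\kappa)|$ small enough to be absorbed by the ultrapowers at play. Everything else follows from elementarity of $k_\alpha$ and the factorization $j_E=k_\alpha\circ j_{E_\alpha}$.
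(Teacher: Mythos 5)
Your argument is correct and follows essentially the same route as the paper: pull the representation back through $k_\alpha$ to define $\bar U=j_{E_\alpha}(f)(\bar\xi_1,\ldots,\bar\xi_n)$, use $\kappa\in\operatorname{rng}(k_\alpha)$ to get $\operatorname{crit}(k_\alpha)>\kappa$, observe that $k_\alpha$ therefore fixes every subset of $\kappa$, and conclude $\bar U=U$. One small inaccuracy: $P(\kappa)^{M_{E_\alpha}}=P(\kappa)^{M_E}=P(\kappa)^V$ does not require GCH or extender strength (for any ultrapower by a $\kappa$-complete measure/extender on $\kappa$, each $X\subseteq\kappa$ equals $j(X)\cap\kappa$ and so already lies in the target model), so that part of your justification is superfluous even though the conclusion is right.
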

\begin{proof}
    By the assumption, we have that $\crit(k_\alpha)\geq (\kappa^{++})^{M_{E_\alpha}}$. Let $\rho_1,...,\rho_n$ be preimages of $\kappa,\xi_1,..,\xi_n$ under $k_\alpha$ respectively. We have $$U'=j_{E_\alpha}(f)(\rho_1,...,\rho_n)\in M_{E_\alpha}.$$ Let us prove that $U'=U$. Indeed, for every $X\subseteq \kappa$ ($P(\kappa)$ is the same in all the models), we have that 
    $X\in U'$ if and only if $k_\alpha(X)\in U$. But we have that $k_\alpha(X)=X$ as the critical point of $k_\alpha$ is above $\kappa$.
\end{proof}
Suppose that we are given for every $i<o(\bar{E})$, $\alpha_i<j_{E_0}(\kappa)$ such that $\l E_i(\alpha_i)\mid \ell\leq i<o(\bar{E})\r$ is Mitchell increasing. Let $\vec{U}$ be the coherent sequence derived from $\l E_i(\alpha_i)\mid \ell\leq i<o(\bar{E})\r$, and let us prove that $\mathbb{M}[\vec{U}]$ is a projection of $\mathbb{P}_{\bar{E}}$. 
\begin{theorem}\label{Thm:Projection of extender-based Radin}
    Let $\bar{E}$ be a Mitchell increasing sequence of extenders with $o(\bar{E})<\kappa=\crit(E)$. For every $\ell<o(\bar{E})$, let $\kappa\leq \alpha_{\ell}<j_{E_0}(\kappa)$ be an ordinal such that $\l E_i(\alpha_i)\mid i<o(\bar{E})\r$ is $\triangleleft$-increasing and let $\vec{U}$ be the generalized coherent sequence derived from $\l E_i(\alpha_i)\mid i<o(\bar{E})\r$. Then $\mathbb{M}[\vec{U}]$ is a projection of $\mathbb{P}_{\bar{E}}$ above a certain condition.
\end{theorem}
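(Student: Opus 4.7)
The plan is to define, above a well-chosen condition $p_{0}\in \mathbb{P}_{\bar{E}}$, a projection map $\sigma\colon \mathbb{P}_{\bar{E}}/p_{0}\to \mathbb{M}[\vec{U}]$, in the spirit of the classical projection from extender-based Prikry onto Tree Prikry described in \cite{Gitik-handbook}. The condition $p_{0}$ will have the form $\langle\langle f,A,\bar{E}\rangle\rangle$ with $\{\alpha_{\ell}\mid \ell<o(\bar{E})\}\cup\{\kappa\}\s \dom(f)$, and with the tree $A$ shrunk in the following way: for every successor $\vec{\nu}$ of $\langle\rangle$ in $A$, denote $\mu_{\vec{\nu}}:=\vec{\nu}(\kappa)_{0}$; then the derived extender sequence at $\mu_{\vec\nu}$ is Mitchell-increasing, and, for each $\ell<o(\bar{e}(\vec{\nu}))$, the ordinal $\vec{\nu}(\alpha_{\ell})_{0}$ plays the role of $\alpha_{\ell}$ for the $\ell$-th extender of $\bar{e}(\vec{\nu})$. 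Analogous requirements are imposed recursively on deeper levels of $A$. This can be achieved because the hypothesis that $\langle E_{i}(\alpha_{i})\mid i<o(\bar{E})\rangle$ is Mitchell-increasing is a measure-one statement, which can be reflected via the proof pattern of Corollary~\ref{Cor:almostCohereSequence}.

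With $p_{0}$ in hand I define $\sigma$ as follows. Given $p=\langle\langle f_{i},A_{i},\bar{e}_{i}\rangle_{i\leq n},\langle f,A,\bar{E}\rangle\rangle$ above $p_{0}$, set $\sigma(p)=\langle t_{1},\ldots,t_{n},\langle \kappa,B\rangle\rangle$ where $\kappa(t_{i})=\crit(\bar{e}_{i})$, $A(t_{i})\s\pi(\crit(\bar{e}_{i}))$ is the image of the level-$1$ successors of $A_{i}$ under the coordinate projection $\vec{\nu}\mapsto \vec{\nu}(\alpha^{i}_{\ell})_{0}$ (where $\alpha^{i}_{\ell}$ is the appropriate factor image of $\alpha_{\ell}$), and $B$ is the analogous projection of the level-$1$ successors of $A$. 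The key verification is that $B\in \bigcap \vec{U}(\kappa)$: this follows because $U(\kappa,i)=E_{i}(\alpha_{i})$ and, by the choice of $p_{0}$, the ordinal $\alpha_{i}$ witnesses membership of $B$ in $E_{i}(\alpha_{i})$ after applying $j_{E_{i}}$. Parallel considerations at the lower pairs give $A(t_{i})\in \bigcap\vec{U}(\crit(\bar{e}_{i}))$, regarded as a coherent sequence with top $\pi(\crit(\bar{e}_{i}))$.

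To finish, one checks the two projection axioms. Order-preservation is straightforward: shrinking trees and refining the functions $f_{i},f$ in $\mathbb{P}_{\bar{E}}$ induces shrinkings of the $A(t_{i})$'s in $\mathbb{M}[\vec{U}]$, while the adjunction of a new lower part in $\mathbb{P}_{\bar{E}}$ arising from a successor $\vec{\nu}\in A_{i}$ corresponds exactly to a one-point extension $p^{\smallfrown}\langle \mu_{\vec{\nu}}\rangle$ on the Magidor side. The refining property---that whenever $\sigma(p)\leq q$ in $\mathbb{M}[\vec{U}]$ there is $p'\geq p$ with $\sigma(p')\geq q$---follows by pulling back each one-point extension in $\mathbb{M}[\vec{U}]$ to a one-point extension in $\mathbb{P}_{\bar{E}}$: for every new ordinal $\mu$ appearing in $q$, the tree $A_{i}$ (or $A$) contains, by the shrinking of $p_{0}$ and the surjectivity of the coordinate projections onto the corresponding measure-one set, a successor $\vec{\nu}$ with $\vec\nu(\alpha^{i}_{\ell})_{0}=\mu$, which we use to extend $p$ accordingly.

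The main obstacle is to verify, at every level of the recursive shrinking, that the projected measure-one sets land precisely in the intersections of ultrafilters prescribed by $\vec{U}$, and that the factor maps $k_{\alpha_{\ell}}$ commute with the projection in a way that is compatible with the coherency requirement (iv) of Definition~\ref{GeneralizedCoherent}. This is essentially the content of the inductive choice of $p_{0}$: the Mitchell-increasing hypothesis on $\langle E_{i}(\alpha_{i})\mid i<o(\bar{E})\rangle$ is exactly what allows one to derive the generalized coherent structure from $\bar{E}$ and thus to guarantee that the projection is consistent throughout the whole tree.
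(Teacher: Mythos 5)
Your overall plan — shrink to a base condition that reflects the coherent structure, then project by reading off the data at the $\alpha_\ell$-coordinates — is the same plan the paper uses, but the two central identifications are wrong, and a third component is missing entirely.

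First, the assignment $\kappa(t_i)=\crit(\bar{e}_i)$ is backwards. In $\mathbb{M}[\vec{U}]$, the coherent sequence $\vec{U}$ is indexed by \emph{non-normal seeds}: $U(\eta,j)$ lives on $\pi(\eta)$, and $\pi(\eta)<\eta$ on a measure-one set precisely because the $E_i(\alpha_i)$ are non-normal. In the projection, the ordinal $\kappa(t_i)$ must be the \emph{coordinate value} $\nu(\bar\alpha_i)_0$ attached to the $i$-th block, which lives in the interval $\bigl(\crit(\bar{e}^i),\,j_{\bar{e}^i_0}(\crit(\bar{e}^i))\bigr)$, and then $\pi(\kappa(t_i))=\crit(\bar{e}^i)$. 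With your identification $\kappa(t_i)=\crit(\bar{e}^i)$, the quantity $\pi(\crit(\bar{e}^i))$ has no meaning in the derived coherent sequence — $\vec{U}$ is simply not defined there — so $A(t_i)\in\bigcap\vec{U}(\kappa(t_i))$ cannot hold and your $\sigma(p)$ is not a well-formed condition of $\mathbb{M}[\vec{U}]$.

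Second, your $\sigma$ only reads off the block structure of a $\mathbb{P}_{\bar{E}}$-condition, so it produces Magidor conditions in which every stem element has $\vec{U}$-order $\geq 1$. But $\mathbb{M}[\vec{U}]$ does have conditions whose stems contain order-$0$ basic pairs, and in $\mathbb{P}_{\bar{E}}$ those points never create a new block: a one-point extension by an object $\nu$ of order $0$ is absorbed into the Cohen part $f^r$. The paper has to track these via the auxiliary sequences $\vec{\alpha}_i$, read off from the $f^i$'s, and it splits the one-point-extension verification into two cases ($o(\nu(\cdot))=0$ vs.\ $>0$) precisely because only the second case produces a new $t_j$. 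Your refinement claim ``for every new ordinal $\mu$ appearing in $q$ there is a successor $\vec\nu$ with $\vec\nu(\alpha^i_\ell)_0=\mu$'' silently assumes that every Magidor one-point extension comes from a tree successor, which is false for order-$0$ extensions: those would require extending the Cohen function $f^r$ instead, a move your $\sigma$ never sees. Without the $\vec\alpha_i$-bookkeeping, the axiom ``$\sigma(p)\leq q\Rightarrow\exists p'\geq p$ with $\sigma(p')\geq q$'' fails whenever $q$ adds an order-$0$ stem element.

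Third, your claim that the reflected coordinate projection lands in the prescribed ultrafilters is stated but not argued; the paper's proof establishes this via the explicit identity
$$(B\restriction\bar\gamma)^*\in\bigcap_{j<o(\bar\gamma)}e_j(\bar\gamma)$$
and the reflection condition $(\star)$ imposed on $A_*$, which is exactly what guarantees $\vec U(\nu(\bar\alpha_i)_0,j)=\bar e_j(\nu)(\nu(\bar\alpha_i)_0)$. You would need to carry out the analogue of this verification at every level of your recursively-shrunk tree.
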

\begin{proof} 
     Consider the condition $p_*=\l\l f_*,A_*,\bar{E}\r\r\in\mathbb{P}_{\bar{E}}$, where $A_*$ is a $d$-tree such that $\dom(f_*)=\{\bar{\kappa}\}\cup\{\bar{\alpha}_i\mid i<o(\bar{E})\}$ and for each $\dom(f_*)$-object $\nu$ in  $A_*$, and $i<o(\bar{E})$ with $o(\nu)=i$,   $\dom(\nu)=\dom(f_*)$ and $$(\star) \ \ o(\nu(\bar{\alpha}_i))=o^{\vec{U}}(\nu(\bar{\alpha}_i)_0)\text{ and }\vec{U}(\nu(\bar{\alpha}_i)_0,j)=\bar{e}_j(\nu)(\nu(\bar{\alpha}_i)_0).$$ Note that such a set $A_*$ exists since for every $i<o(\bar{E})$, $$j_{E_i}(\vec{U})(\alpha_i)=k(j_{E_i(\bar{\alpha}_i)}(\vec{U})([id]_{E_i(\bar{\alpha}_i)}))$$ 
     Since $\vec{U}$ is coherent and $U(\kappa,i)=E_i(\bar{\alpha}_i)$,  we have that $$j_{E_i(\bar{\alpha}_i)}(\vec{U})([id]_{E_i(\bar{\alpha}_i)})=\l U(\kappa,j)\mid j<i\r=\l E_j(\bar{\alpha}_j)\mid j<i\r$$ and since $i,\kappa^{+}<\crit(k)$, we conclude that $$\l U(\kappa,j)\mid j<i\r=k(\l U(\kappa,j)\mid j<i\r)=j_{E_i}(\vec{U})(\alpha_i).$$     Finally,  since $\kappa\leq \alpha_i<j_{E_0}(\kappa)$ we have
     $$R_i(\bar{\alpha}_i)=\langle\alpha_i\rangle^\smallfrown\l E_j\mid j<i\r$$ (see Definition 4.4 in \cite{CarmiMagidorRadin}). 
     Thus,
     $$j_{E_i}(\vec{U})(mc_i(\bar{\alpha}_i)(j_{E_i}(\bar{\alpha}_i))_0)=j_{E_i}(\bar{U})(\alpha_i)=\l U(\kappa,j)\mid j<i\r=$$
     $$=\l E_j(\bar{\alpha}_j)\mid j<i\r=\l\bar{e}_j(mc_i(\bar{\alpha}_i)(j_{E_i}(\bar{\alpha}_i)))(\bar{\alpha}_j)\mid j<i\r.$$
     Note that by squashing every $\nu\in A_*$ by some $\mu$ (i.e, taking $\nu\circ \mu^{-1}$), condition $(\star)$ does not changes. 
     
     Given a $d$-tree $B\subseteq Ob(d)^{<\omega}$, and $\bar{\gamma}=\l \tau,e_0,e_1,\cdots e_\alpha\cdots\mid \alpha<o(\bar{\gamma})\r\in d$ (say $\kappa_0=\crit(e_0)$) we define $$(B\restriction \bar{\gamma})^*=\{\nu(\bar{\gamma})_0\mid \nu\in Ob(d), \  \forall \vec{\xi}\in B\cap V_{\nu(\kappa_0)}(\nu\in\Succ_B(\vec{\xi}))\}.$$
     Then $$(B\restriction\bar{\gamma})^*\in \bigcap_{i<o(\bar{\gamma})} e_i(\bar{\gamma}).$$
     Indeed, if $i<o(\bar{\gamma})$ and let $\vec{\xi}\in j_{e_i}(B)\cap V_{\kappa_0}=B$, then since $B$ is a $d$-fat, $mc_i(d)\in j_{e_i}(\Succ_B(\vec{\xi}))=\Succ_{j_{e_i}(B)}(\vec{\xi})$. Thus, $mc_i(d)(j_{e_i}(\bar{\gamma}))_0\in j_{e_i}(B\restriction \bar{\gamma})^*)$ 
     
     Since every condition $p\in\mathbb{P}_{\bar{E}}/p_*$ is of the form $p\leq^* p_*^{\smallfrown}\l\nu_1,...,\nu_k\r$, we can define $\pi(p)$ recursively on the length of $p$.
     First define $\pi(p_*)=\l\l\kappa,(A\restriction\overline{\alpha})^*\r\r$. Note that $(A\restriction\overline{\alpha})^*\in \bigcap_{i<o^{\vec{U}}(\kappa)}U(\kappa,i)$. Now given 
     a condition $$p=\l \l f^0,A_0,\bar{e}^0\r,...,\l f^n,A_n,\bar{e}^n\r,\l f^{n+1},A,\bar{E}\r\r,\in \mathbb{P}_{\bar{E}}/p_*,$$
suppose we have already defined
     $$\pi(p)=\l \vec{\alpha}_1, t_1,...,\vec{\alpha}_n,t_n,\vec{\alpha}_{n+1},t_{n+1}\r$$ such that for every $1\leq i\leq n$, 
     \begin{enumerate}
         \item $\overline{\kappa(t_i)}\in \dom(f^{i})\cap (crit(\bar{e}^i),j_{\bar{e}^i_0}(crit(\bar{e}^i))$. We assume $\kappa(t_{n+1})=\alpha$.
         \item $\vec{\alpha}_i=\l f^i_k(\kappa(t_i))_0,...,f^i_r(\kappa)_0\r$, where $k$ is the minimal such that for every $l\geq k$, $o(f^i_l(\kappa(t_i)))=0$. In particular $\vec{\alpha}_i$ is a sequence of ordinal of order $0$,
         \item $A(t_i)=(A_i\restriction \overline{\kappa(t_i)})^*$. 
         
         \end{enumerate}
         Note that specifying $\kappa(t_i)$ completely determines $\pi(p)$ from $p$. In particular, 
        any given $q\leq^* p$, has to be defined as a direct extension of $\pi(p)$ by shrinking for each $i\leq n+1$,  $A(t_i)=(A_i\restriction \overline{\kappa(t_i)})^*$ to $(A^{q}_i\restriction \overline{\kappa(t_i)})^*$. In that case $\pi(q)\leq^*\pi(p)$, and for every direct extension $x\leq^* \pi(p)$ there is a direct extension $q\leq^* p$ such that $\pi(q)\leq^*x$.
        
        Now given $\nu\in A_r$ for some $1\leq r\leq n+1$, 
        
        \begin{itemize}
            \item [\underline{Case 1}] if  $o(\nu(\kappa(t_r))=0$, then for every $\bar{\gamma}\in \dom(f^r)$, $o(\nu(\bar{\gamma}))=0$ (see Definition 4.3 item (5) in \cite{CarmiMagidorRadin}) and therefore in $p^\smallfrown\nu$ we only append points of order $0$ to end extend the sequences of the Cohen function $f^r$ (see Definition 4.4 in \cite{CarmiMagidorRadin}).  So in this case $$\pi(p^\smallfrown\nu)=\l \vec{\alpha}_1, t_1,...,\vec{\alpha}'_r,t_r,...\vec{\alpha}_{n+1},t_{n+1}\r$$
        where $\vec{\alpha}'_r=\vec{\alpha}_r^\smallfrown\nu(\overline{\kappa(t_r)})_0$. Note that by definition, $\nu(\overline{\kappa(t_r)})_0\in (A_r\restriction \overline{\kappa(r_t)})^*=A(t_i)$ and therefore $\pi(p^\smallfrown\nu)$ is a legitimate one-point extension of $\pi(p)$.
        \item [\underline{Case 2}:] if $o(\nu(\overline{\kappa(t_r)}))>0$, We define $$\pi(p^{\smallfrown}\nu)=\l\vec{\beta}_1, s_1,...\vec{\beta}_{n+1},s_{n+1},\vec{\beta}_{n+1},s_{n+1}\r$$  by specifying $$\kappa(s_i)=\begin{cases}\kappa(t_i)& i<r\\\nu(\kappa(t_r))_0 & i=r\\
    \kappa(t_{i-1}) &r<i\leq n+1\end{cases}$$  
    By definition, $\nu(\overline{\kappa(t_r)})_0\in (A_r\restriction \overline{\kappa(r_t)})^*=A(t_r)$, and therefore $\nu(\kappa(t_r))_0$ is a legitimate ordinal to be added to $\pi(p)$.
    Denote by $\bar{e}=\bar{e}(\nu)$ we note that by $(\star)$, $$U(\nu(\kappa(t_r)),j)=\bar{e}_j(\overline{\kappa(t_r)}).$$
    Therefore $A(s_r)=((A\downarrow\nu)\restriction \overline{\nu(\kappa(t_r))})^*\in\bigcap_{j<o^{\vec{U}}(\nu(\kappa(r_t)))}U(\nu(\kappa(t_r)),j)$. 
    
    \end{itemize}
 Also, note that every one-point extension $\pi(p)^\smallfrown\rho$ of $\pi(p)$ using an order $0$ ordinal there is some $\nu$ with the same order (this is due to $(\star)$) such that $\pi(p^\smallfrown\nu)=\pi(p)^\smallfrown\rho$.
      Since every extension of $\pi(p)$ is of the form $q\leq^* \pi(p)^\smallfrown\l\rho_1,..,\rho_n\r$, we conclude that $\pi$ is a projection.
\end{proof}
\section{Non-normal Radin forcing}\label{SectionNonNormalRadin}
Suppose that $\kappa$ is a $\mathcal{P}_2\kappa$-hypermeasurable cardinal as witnessed by an elementary embedding $j:V\rightarrow M$. 
Let $\sigma <j(\kappa)$\label{pi} and $\pi:\kappa\rightarrow\kappa$ be a function such that $j(\pi)(\sigma)=\kappa$. Following Cummings and Woodin's \cite{Cummings} we derive a sequence of ultrafilters as follows: $u^j(0):=\l \sigma\r$ and for each $\xi\geq 1$
$$u^j(\xi):=\{X\s V_\kappa\mid u^j\restriction\xi\in j(X)\}.$$\label{measuresequence}
The construction of the $u^j$ is continued until reaching $\xi$ such that $u^j\restriction \xi\notin M.$ The least ordinal $\xi$ where the construction halts will be denoted $\ell(u^j).$

\smallskip

Note that both $u^j(1)$ and $u^j(2)$ are $\kappa$-complete measure and that $u^j(2)$ concentrates on pairs $\l \beta, w_\beta\r$ where $w_\beta$ is a $\pi(\beta)$-complete ultrafilter over $V_{\pi(\beta)}$. Unlike in the usual construction of Radin forcing \cite{Rad} (see \cite[\S5.1]{Gitik-handbook}) our $u(1)$ here is a non-normal measure on $V_\kappa$.

\begin{notation}
    For a sequence $u$ as before $\sigma_u$ denotes the  ordinal in $u(0).$
\end{notation}
\begin{definition}[Measure sequences and measure one sets]\hfill

\begin{enumerate}
    \item $\mathcal{MS}_0:=\{u\in V_{\kappa+2}\mid \exists j\colon V\rightarrow M\;\exists\alpha\leq \ell(u^j)\; u=u^j\restriction\alpha\}$;
    \item $\mathcal{MS}_{n+1}:=\{u\in \mathcal{MS}_n\mid \forall \xi\in [1,\ell(u))\, \mathcal{MS}_n\cap V_{j_u(\pi)(\sigma_u)}\in u(\xi)\}$.\footnote{Here $j_u$ stands for an embedding witnessing $u\in\mathcal{MS}_0.$}
\end{enumerate}
    The collection of \emph{measure sequences} $\mathcal{MS}$ is defined as $\bigcap_{n<\omega}\mathcal{MS}_n$.

    Given $u\in \mathcal{MS}$ denote by $\mathscr{F}(u)$ the filter associate to $u$; namely,
    $$\mathscr{F}(u):=\begin{cases}
        \{\emptyset\}, & \text{if $\ell(u)=1$};\\
        \bigcap_{1\leq \xi<\ell(u)} u(\xi), & \text{if $\ell(u)\geq 2$.}
    \end{cases}$$
    For $A\in \mathscr{F}(u)$ and $1\leq \xi<\ell(u)$ we will denote $(A)_\xi:=\{w\in A\mid \ell(w)=\xi\}$.\footnote{Note that $(A)_\xi \in \mathcal{F}(u\restriction\xi)$.}
\end{definition}
The next lemma due to Cummings \cite{Cummings} shows that one can derive long measure sequences from $\mathcal{P}_2\kappa$-hypermeasurable embeddings:
\begin{lemma}
    Let $\kappa$ be a $\mathcal{P}_2\kappa$-hypermeasurable cardinal and $j\colon V\rightarrow M$ a witnessing embedding. Then, $\ell(u^j)\geq (2^\kappa)^+$ and $u^j\restriction\alpha\in \mathcal{MS}$ for $\alpha<\ell(u^j).$
\end{lemma}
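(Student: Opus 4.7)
The plan is to establish both conclusions by leveraging the closure hypothesis $V_{\kappa+2}\subseteq M$, together with an induction that peels off the layers $\mathcal{MS}_n$.

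For the length estimate $\ell(u^j)\geq (2^\kappa)^+$, I will show that whenever $\alpha<(2^\kappa)^+$ the initial segment $u^j\restriction\alpha$ admits a coding as an element of $V_{\kappa+2}$, and hence sits in $M$ by hypothesis. Each measure $u^j(\xi)$ with $\xi\geq 1$ is an ultrafilter on $V_\kappa$, so a subset of $V_{\kappa+1}$ and thus an element of $V_{\kappa+2}$; the sole entry $u^j(0)=\langle\sigma\rangle$ is handled by coding $\sigma$ via $\pi$, since $j(\pi)(\sigma)=\kappa$ allows replacing $\sigma$ by its representative mod $u^j(1)$. Because the length is bounded by $2^\kappa$, a bijection between $\alpha$ and a subset of $V_\kappa$ (available in $V$) permits re-encoding the whole function $\xi\mapsto u^j(\xi)$ as a single subset of $V_{\kappa+1}$. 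The inclusion $V_{\kappa+2}\subseteq M$ then delivers $u^j\restriction\alpha\in M$, so $\ell(u^j)>\alpha$ for every such $\alpha$.

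For the second claim I argue by induction on $n<\omega$ that $u^j\restriction\alpha\in\mathcal{MS}_n$ for all $\alpha<\ell(u^j)$. The base $n=0$ is immediate from the definition, with $j$ itself (together with the coding discussed above) serving as the witness. For the inductive step, I must verify that for every $\xi$ with $1\leq\xi<\alpha$ one has $\mathcal{MS}_n\cap V_{j_u(\pi)(\sigma_u)}=\mathcal{MS}_n\cap V_\kappa\in u^j(\xi)$. Unwinding the definition of $u^j(\xi)$ turns this into the assertion $u^j\restriction\xi\in j(\mathcal{MS}_n\cap V_\kappa)$, which by elementarity coincides with $u^j\restriction\xi\in\mathcal{MS}_n^M$.

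The crux is then the transfer principle $\mathcal{MS}_n^V\cap V_{\kappa+2}=\mathcal{MS}_n^M\cap V_{\kappa+2}$, which I will establish by induction on $n$ in parallel with the main induction. The equality of the $V_{\kappa+2}$'s already gives the agreement at $n=0$ once one checks that the existential witness can be found inside $M$: indeed, one can construct a derived embedding $j'\colon M\to N$ inside $M$ (obtained, for example, from the ultrapower of $M$ by an appropriate component measure of $u^j\restriction\xi$, together with a factor map) whose associated measure sequence $u^{j'}$ reproduces $u^j\restriction\xi$, using the fact that $V_{\kappa+2}\subseteq M$ transfers enough $\mathcal{P}_2\kappa$-hypermeasurability of $\kappa$ into $M$. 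Once the $n=0$ agreement is in place, the higher levels follow mechanically: $\mathcal{MS}_{n+1}$ is defined in terms of $\mathcal{MS}_n\cap V_\kappa$, and $V_\kappa\subseteq V_{\kappa+2}$, so the inductive hypothesis propagates. The step I expect to be the main obstacle is exactly the production of the internal witnessing embedding in $M$, i.e.\ ensuring that the hypermeasurability properties of $\kappa$ needed to run Cummings' construction of $u^{j'}$ reflect into $M$ to the required degree.
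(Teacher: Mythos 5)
The paper itself does not supply an argument for this lemma --- it is quoted with a citation to Cummings --- so your proposal has to be judged on its own rather than against a specific in-paper proof. Your plan for the length estimate is essentially the right one: code the sequence $\langle u^j(\xi)\mid\xi<\alpha\rangle$ (each entry a subset of $\mathcal{P}(V_\kappa)$, together with a fixed bijection of $\alpha$ with a subset of $V_{\kappa+1}$) as a single element of $V_{\kappa+2}$, and then conclude from $V_{\kappa+2}\subseteq M$ that the sequence is in $M$. The digression about ``coding $\sigma$ via its representative mod $u^j(1)$'' is unnecessary and somewhat confused --- $\sigma$ is just an ordinal $<j(\kappa)$ and therefore already in $M$ --- but it does no damage to the length bound.

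The real problem is the second half, and you flag it yourself as ``the main obstacle.'' The step you wave at is the entire content of the lemma, and the way you phrase it --- that $V_{\kappa+2}\subseteq M$ ``transfers enough $\mathcal{P}_2\kappa$-hypermeasurability of $\kappa$ into $M$'' --- is not correct as stated and hides the difficulty. $M$ does \emph{not}, in general, see $\kappa$ as $\mathcal{P}_2\kappa$-hypermeasurable; the extender giving rise to $j$ is not an element of $M$, and if $M$ could reconstruct an embedding with $V_{\kappa+2}$ in its target one would run into the familiar Kunen-style obstructions. So there is no ``reflected'' $\mathcal{P}_2\kappa$-embedding inside $M$ to fall back on. What one actually needs to show, for each $1\leq\xi<\alpha$, is that $u^j\restriction\xi\in j(\mathcal{MS}_n)$, i.e.\ that $M$ \emph{itself} recognizes $u^j\restriction\xi$ as arising from an $M$-embedding. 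The standard route is to look at the ultrapower $j'\colon M\to N=\mathrm{Ult}(M,u^j(\xi))$ (legitimate inside $M$ since $u^j(\xi)\in V_{\kappa+2}\subseteq M$) together with the factor map $k\colon N\to M$ determined by $k([f]_{u^j(\xi)})=j(f)(u^j\restriction\xi)$, and then to verify that $\mathrm{crit}(k)$ sits strictly above everything relevant, so that $[\mathrm{id}]_{u^j(\xi)}$ is literally equal to $u^j\restriction\xi$ and the derived sequence $u^{j'}$ reproduces $u^j\restriction\xi$ entry by entry. That verification --- the behaviour of the factor map, the computation of its critical point, and the check that the seeds and the $\sigma'$-ordinal project correctly under $\pi$ in the non-normal case --- is precisely what is missing from your sketch. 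Your ``transfer principle'' $\mathcal{MS}_n^V\cap V_{\kappa+2}=\mathcal{MS}_n^M\cap V_{\kappa+2}$ is not the thing you need either: the target is membership in $j(\mathcal{MS}_n)$, which refers to measure sequences and embeddings as computed in $M$ with top cardinal $j(\kappa)$, and the hard inclusion ($V$-measure sequences on $\kappa$ are $M$-measure sequences) is exactly the factor-map argument. Until that is carried out, the induction on $n$ does not get off the ground past $n=0$.
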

In what follows 
$u$ will be the truncation  of the measure sequence $u^j$ derived from $j$ using some  $\sigma<j(\kappa)$ as a seed; to wit, $u=u^j\restriction\alpha$ for some $\alpha<\ell(u^j).$ Likewise we will fix a function $\pi\colon \kappa\rightarrow \kappa$ such that $j(\pi)(\sigma)=\kappa$.

\smallskip

We define an ordering between measure sequences as follows:
\begin{definition}\label{OrderBetweenMeasures}
    Given $v, w\in \mathcal{MS}\cap V_\kappa$ write $v\prec w$ whenever $v\in V_{\kappa_w}$. Here we denoted $\kappa_w:=\pi(\sigma_w)$.
\end{definition}
\begin{remark}
    Since $w\in V_\kappa$ it follows that $\sigma_w<\kappa$ and as a result $\pi(\sigma_w)$ is well-defined. Also, observe that $v\prec w$ entails  $\sigma_u<\pi(\sigma_v)$.
\end{remark}

We are now in conditions to define the non-normal Radin forcing:

\begin{definition}\label{TreeRadin}
    The \emph{Radin forcing} $\mathbb{R}_u$ consists of finite sequences 
    $$p=\l \l u^p_0, A^p_0\r,...,\l u^p_{\ell(p)-1}, A^p_{\ell(p)-1}\r,\l u^p_{\ell(p)},A^p_{\ell(p)}\r\r $$
    where 
    \begin{enumerate}
        \item $u^p_{\ell(p)}=u$ and $u^p_i\in \mathcal{MS}\cap V_\kappa$ for all $i< \ell(p)$,
        \item $A^p_i\in \mathscr{F}(u^p_i)$ for all $i\leq \ell(p)$,
        \item $\langle u^p_i\mid i<\ell(p)\rangle$ is $\prec$-increasing, 
        \item and $u^p_{i}\prec v$ for all $v\in A^p_{i+1}$ and $i< \ell(p)$.
    \end{enumerate}
    When $p$ is clear from the context we will tend to suppress the superindex $p$.
    
    Given  $p,q\in \mathbb{R}_u$ write $p\leq^* q$ whenever $\ell(p)=\ell(q)$, $u^p_i=u^q_i$ and $A^p_i\s A^q_i$. 
\end{definition}
The minimal one-point extensions of a condition are given as follows:
\begin{definition}
Let $p=\l \l u_0, A_0\r,...,\l u_{n-1}, A_{n-1}\r,\l u,A_n\r\r \in\mathbb{R}_{u}$, $i\leq n$ and $v\in A_i$. We define the \emph{one-point extension of $p$ by $v$}, $p\cat v$, as follows:
$$p\cat v:=\l \l u_0, A_0\r,...,\l v,A_i\downarrow v\r,\l u_i,(A_i)_v\r,...,\l u_{n-1},A_n\r,\l u,A\r\r, $$
where $A_i\downarrow v:=\{w\in A_i\cap V_{\kappa_v}\mid \ell(w)<\ell(v)\}$ and $(A_i)_v:=\{w\in A_i\mid v\prec w\}.$

Given a (non-necessarily $\prec$-increasing) sequence  $\langle v_i\mid i\leq k\rangle$ one defines $p\cat \langle v_i\mid i<k\rangle$ by recursion as $(p\cat \langle v_i\mid i<k\rangle)\cat v_k.$
\end{definition}
For certain $v\in \mathcal{MS}$ it is plausible that $p\cat v$ is not a well-defined condition. Let us call a condition $p\in \mathbb{R}_u$ \emph{pruned} if $p\cat \langle v_i\mid i\leq k\rangle$ is a condition for all finite sequences $\langle v_i\mid i\leq k\rangle$ in the measure one sets of $p$. It is not hard to show  that the set of pruned conditions is $\leq^*$-dense in $\mathbb{R}_u$. Thus, we do not loss any generality by assuming that all of our conditions in $\mathbb{R}_u$ are pruned. 
\begin{definition}[The forcing ordering]
    For two conditions $p,q\in \mathbb{R}_u$ we write $p\leq q$ if there is $\langle v_i\mid i\leq k\rangle\in \prod_{i\leq k} A^q_{j_i}$ such that $p\leq^* q\cat \langle v_i\mid i\leq k\rangle$.
\end{definition}
One can check that if $v,w\in A_i$ then $p\cat \langle v,w\rangle=p\cat \langle w, v\rangle$. This permits to show that $\leq$  is a transitive partial order relation on $\mathbb{R}_u$.

\begin{remark}
 We point out that the map $\pi$ representing $\kappa$  can be used to establish a projection between $\mathbb{R}_u$ and the usual Radin forcing.
\end{remark}

\begin{lemma}[Some properties of $\mathbb{R}_u$]\label{Factoring}\hfill
    \begin{enumerate}
        \item $\mathbb{R}_u$ is a $\kappa$-centered poset;
        \item for each $p\in \mathbb{R}_u$ and $i<\ell(p)$, 
$$\mathbb{R}_{u}/p\simeq(\mathbb{R}_{u_i}/p\restriction i+1)\times(\mathbb{R}_{u}/p\restriction [i+1,\ell(p));$$
\item for each $p\in \mathbb{R}_u$ the poset $\langle \mathbb{R}_u/p,\leq^*\rangle$ is $\pi(\sigma_{u^p_0})$-directed-closed.\qed
    \end{enumerate}
\end{lemma}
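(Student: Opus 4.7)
My plan is to address the three items in order. All three are standard verifications for Magidor/Radin-style posets, so the work consists mainly of bookkeeping rather than of a single decisive idea; below I sketch each step and indicate the one that looks most delicate.

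For item $(1)$, I will call the finite sequence $\langle u^p_0,\ldots,u^p_{\ell(p)-1}\rangle$ the \emph{stem} of $p$ and show that two conditions $p,q$ sharing the same stem are always compatible: each $A^p_i$ and $A^q_i$ belongs to the same filter $\mathscr{F}(u^p_i)$, so their intersection is still a legitimate measure-one set and yields a common $\leq^*$-extension. Since $\kappa$ is inaccessible, $|V_\kappa|=\kappa$, so $\mathcal{MS}\cap V_\kappa$ has cardinality at most $\kappa$, and hence the collection of possible stems has cardinality $\kappa$, giving the required centered decomposition of $\mathbb{R}_u$ into $\kappa$ compatible classes.

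For item $(2)$, I will write down the natural splitting map $\Phi(q):=(q\restriction i+1,\,q\restriction[i+1,\ell(q)])$ for $q\leq p$. Each half is a legitimate condition in the respective factor: this uses that $u^p_i\in V_\kappa$ (so it plays the role of the top measure sequence of the left factor) while the right factor keeps $u$ as its top measure sequence. That $\Phi$ is an order-isomorphism reduces to the observation that, by clauses $(3)$ and $(4)$ of Definition~\ref{TreeRadin} combined with Definition~\ref{OrderBetweenMeasures}, any measure sequence added on one side of the split lies in $V_{\kappa_{u^p_i}}$ or properly above $u^p_i$ in the $\prec$-order, so a one-point extension on the left does not interact with one on the right. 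Routine checks then show both that $\Phi$ is a bijection and that it preserves $\leq$ and $\leq^*$ in both directions.

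For item $(3)$, let $D\s \mathbb{R}_u/p$ be a $\leq^*$-directed family of size $<\pi(\sigma_{u^p_0})$. Because $\leq^*$ preserves stems, every $q\in D$ shares the stem of $p$; it then suffices to set $A^*_i:=\bigcap_{q\in D}A^q_i$ for $i\leq\ell(p)$ and check that each $A^*_i\in \mathscr{F}(u^p_i)$. Since each measure $u^p_i(\xi)$ is $\pi(\sigma_{u^p_i})$-complete (being derived from an embedding with critical point $\pi(\sigma_{u^p_i})$ on page~\pageref{measuresequence}) and $\langle u^p_i\mid i<\ell(p)\rangle$ is $\prec$-increasing, these completeness bounds are non-decreasing in $i$, while the top filter $\mathscr{F}(u)$ is itself $\kappa$-complete. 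The minimum of these bounds is therefore $\pi(\sigma_{u^p_0})$, matching the stated closure bound. The most delicate point is precisely this completeness claim for the measures $u^p_i(\xi)$, which relies on the construction of $\mathcal{MS}$ recalled just before Definition~\ref{OrderBetweenMeasures}; once this is in hand, items $(1)$--$(3)$ all go through by routine verification.
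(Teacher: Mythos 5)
The paper gives no proof of this lemma (it is stated with a terminal \qed, i.e., the three clauses are treated as routine), so there is no argument in the paper to compare against; your blind reconstruction supplies exactly the standard verifications that are implicitly intended, and all three steps are correct. Two small remarks: for item (3) directedness is not actually needed, since any family of fewer than $\pi(\sigma_{u^p_0})$ many $\leq^*$-extensions of $p$ already shares the stem of $p$ and the coordinatewise intersection of measure-one sets is a $\leq^*$-lower bound -- the only substantive point is the one you isolate, namely that $\mathscr{F}(u^p_i)$ is $\kappa_{u^p_i}$-complete with $\kappa_{u^p_0}\leq\kappa_{u^p_1}\leq\cdots\leq\kappa$ along the $\prec$-increasing stem; and in the same intersection argument one should also note (implicit in your write-up) that the well-formedness clauses (3) and (4) of Definition~\ref{TreeRadin} and the prunedness requirement are preserved under shrinking the measure-one sets, which is immediate.
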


Next  we verify the \emph{Strong Prikry property} for  $\mathbb{R}_u.$ For this we need the notion of a \emph{fat tree} which, to our understanding, is due to Gitik \cite[\S5]{Gitik-handbook}. 
\begin{definition}[Fat trees]
 Let $n<\omega$ and $w\in \mathcal{MS}$.   A tree $T\subseteq [\mathcal{MS}\cap V_{\kappa_w}]^{\leq n}$ consisting of $\prec$-increasing sequences is called  \emph{$w$-fat} if it is either the empty tree $\varnothing$ or for each $\langle v_0,\dots, v_k\rangle\in T$ with $k<n$, $$\text{ $\Succ_{T}(\langle v_0,\dots, v_k\rangle)\in w(\alpha)$ for some $\alpha< \ell(w)$.}$$
 

  Given a $w$-fat tree $T$ we denote by $\mathcal{B}(T)$ the maximal branches of $T$.
\end{definition}

\begin{lemma}[Strong Prikry property]
Let $p\in \mathbb{R}_u$ and $D\s \mathbb{R}_u$ be a dense open set. There is $p\leq^* p^*$, $I\s \ell(p)$ and ${\mathcal{T}}=\langle {T}_i\mid i\in I\rangle$ such that:
\begin{enumerate}
    \item ${T}_i$ is a $u^p_i$-fat tree.
   \item For each $\langle \vec v_i\mid i\in I\rangle$ with $\vec v_i\in \mathcal{B}({T}_i)$,
    $p^*\cat \langle \vec v_0,\dots,\vec{v}_{\max(I)}\rangle\in D.$
\end{enumerate}
\end{lemma}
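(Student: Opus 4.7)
The plan is to reduce the general statement to the ``top-block only'' case via the factoring isomorphism in Lemma~\ref{Factoring}(2), and then prove both the ordinary Prikry property and the Strong Prikry property for $p=\langle\langle u,A\rangle\rangle$ by a joint induction on the height of the fat tree.

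For the reduction, iterating the factorization
$\mathbb{R}_u/p\simeq(\mathbb{R}_{u^p_i}/p\restriction i+1)\times(\mathbb{R}_u/p\restriction[i+1,\ell(p)+1))$
at each $i<\ell(p)$ lets me treat each block as an independent subforcing, obtain a $u^p_i$-fat tree $T_i$ at those coordinates where an extension is genuinely needed (these form the index set $I$), and then paste the data through the product. So I may assume $p=\langle\langle u,A\rangle\rangle$. I first establish the ordinary Prikry property by the standard homogenization: for each $v\in A$, ask whether there exists $p_v\geq^* p\cat v$ with $p_v\in D$; because each $u(\xi)$, $1\leq\xi<\ell(u)$, is $\kappa$-complete, the yes/no dichotomy is homogeneous on a measure-one subset $A_\xi\subseteq(A)_\xi$. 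Intersecting over $\xi$ yields a single $A^*\in\mathscr{F}(u)$, and a uniform direct extension $p^*$ is obtained via the $\pi(\sigma_{u^p_0})$-directed-closure of $\leq^*$ from Lemma~\ref{Factoring}(3).

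For the Strong Prikry property, I would proceed by induction on $n$, the maximal height of the fat tree required. If height $n=1$ suffices for some $\xi$, a colouring along $u(\xi)$ produces a measure-one set of $v$ admitting a direct extension of $p\cat v$ into $D$, and these are collected as the successor set at the root of $T$. Otherwise, for each $v\in A$ with $\ell(v)\geq 2$, the induction hypothesis applied inside the smaller forcing $\mathbb{R}_v$ yields a $v$-fat tree $T_v$ of smaller height together with $p^*_v\geq^* p\cat v$ such that every maximal branch through $T_v$ pushes $p^*_v$ into $D$. Since the assignment $v\mapsto(p^*_v,T_v)$ lies in $V_\kappa$, applying $j_u$ and reflecting through the property $\mathcal{MS}\cap V_{j_u(\pi)(\sigma_u)}\in u(\xi)$ built into the definition of $\mathcal{MS}_{n+1}$ produces a $u$-fat tree $T^*$ witnessing the statement, with the common direct extension $p^*$ again obtained from the directed-closure of $\leq^*$.

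The main obstacle will be carrying out this reflection step in the absence of normality. Since $u(1)$ is typically non-normal, diagonal intersections indexed by ordinals below $\kappa$ are unavailable, so one cannot simply diagonalize the successor sets of $T_v$ along the $v$'s. Instead, one exploits that every $v\in A$ of length $\geq 2$ is itself in $\mathcal{MS}$, so the whole induction can be invoked below $\pi(\sigma_v)$; then the $\kappa$-completeness of each $u(\xi)$ combined with the directed-closure of $\leq^*$ allows the many $p^*_v$ to be intersected into a single top measure-one set. Past this point the combinatorics are essentially those of the normal Radin case in \cite[\S5]{Gitik-handbook}, as only $\kappa$-completeness of the measures in $u$ is ever needed, never their normality.
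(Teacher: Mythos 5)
Your reduction via the factoring isomorphism matches the paper's opening move, and the yes/no colouring on measure-one sets for the ordinary Prikry property is fine in outline. The real gap is in the amalgamation step: you must convert a $\kappa$-sized family of data (one piece per $v\in A$, or per possible lower part) into a single $u$-fat tree and a single direct extension, and your proposed mechanism does not do it. You correctly observe that non-normality of $u(1)$ rules out the ordinal-indexed diagonal intersection, but the fix you offer --- that $\kappa$-completeness of each $u(\xi)$ together with the $\pi(\sigma_{u^p_0})$-directed-closure of $\leq^*$ allows the many $p^*_v$ to be intersected --- is false: the family has size $\kappa$, while those closure properties only control families of size ${<}\kappa$. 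Nor does applying $j_u$ to $v\mapsto(p^*_v,T_v)$ and "reflecting" automatically yield, in $V$, a single $u$-fat tree; that step requires an argument you have not supplied. What the paper actually shows is that a diagonal intersection \emph{is} available in the non-normal setting, provided it is indexed by elements of $V_\kappa$ (lower parts, or finite $\prec$-sequences) rather than by ordinals, and diagonalized through the relation $v\prec w$ iff $v\in V_{\kappa_w}$: it forms sets such as $\diagonal_{L\in\mathcal{L}_{n+1}(p^n)}A_{i_{\alpha,L}}(L)$ and $\{z\mid\forall\vec{v}\in T\restriction i\ (\vec{v}\prec z\Rightarrow z\in B_i(\vec{v}))\}$ and verifies that they lie in $\mathscr{F}(u)$. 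Recognizing and exploiting this $\prec$-diagonalization is the essential technical content of the non-normal strong Prikry property, and it is absent from your proposal.

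There is also a structural problem: you induct on "the maximal height of the fat tree required," but this quantity is not determined in advance by $p$ and $D$. The paper's argument is a fusion over $\omega$: it builds a $\leq^*$-increasing sequence $\langle p^n\mid n<\omega\rangle$ so that any $q\in D$ of length ${\geq}n$ below $p^n$ admits a $u$-fat tree of height $n$ witnessing an approximate form of the conclusion \emph{relative to the lower part} $L_n(q)$; a $\leq^*$-lower bound $p^\omega$ then strips the lower-part dependence, and a final stabilization claim uniformizes the side-conditions of the trees $T^q$. Your alternative of recursing through the smaller posets $\mathbb{R}_v$ leaves unspecified which dense subset of $\mathbb{R}_v$ the induction is applied to and why the recursion terminates, and it omits the lower-part bookkeeping entirely. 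So the proposal takes a different route from the paper's, but one with two genuine holes.
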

\begin{proof}
Let $p$ and $D$ be as in the statement of the lemma. To streamline the argument let us assume that $p=\l \langle u,A\rangle\r$. The general argument follows  combining this base case with the factoring lemma (see Lemma~\ref{Factoring}).

For  $q\in\mathbb{R}_u$ and $n\leq \ell(q)$ we denote by $L_n(q)$ the \emph{$n$th-tail of $q$}; namely, $$L_n(q):=q\restriction\ell(q)-n.$$

In the first part of the proof we define by induction a $\leq^*$-increasing sequence $\langle p^n\mid n<\omega\rangle$ of conditions with $p^0:=p$ such that for each $n\geq 1$ and each condition $p^n\leq q\in D$ with $\ell(q)\geq n$ the following hold:
\begin{enumerate}
	\item There is  a $u$-fat tree $T^q$  of height $n$ with $L_n(q)\prec w$ for all $\langle w\rangle\in T^q$;
	\item For all $\vec{w}\in\mathcal{B}(T^q)$ there is $q_{\vec{w}}\in D$ such that $L_n(q)^\smallfrown(p^n\cat\vec{w})\leq^*q_{\vec{w}}$.
\end{enumerate}
Clearly the above holds for any $q\in D$ such that $p^0\leq q$ as witnessed by the tree $T^q:=\varnothing$. Suppose by induction that $p^{n}=\langle (u, A^{p^n})\rangle$ has been defined. 

\begin{claim}
	There is $p^{n}\leq^* p^{n+1}$ such that for each condition $p^{n+1}\leq q\in D$ with $\ell(q)\geq n+1$  Clauses~(1) and (2) above hold. 
\end{claim}
\begin{proof}[Proof of claim]
	Denote $\mathcal{L}_{n+1}(p^n):=\{L_{n+1}(q)\in V_\kappa\mid q\leq p^n,\, \ell(q)\geq n+1\}$. 
	
	For a fixed $L\in\mathcal{L}_{n+1}(p^n)$ denote by $A_0(L)$ the collection of all $v\in A^{p^n}$ for which there is a $u$-fat tree $T^{v}$ of height $n$ (with $v\prec w$ for all $\langle w\rangle\in T^{v}$) such that for each $\vec{w}\in \mathcal{B}(T^v)$, there is  $q_{\vec{w},v}\in D$ with $L^\smallfrown (p^n\cat \langle v,\vec{w}\rangle)\leq^* q_{\vec{w},v}$. 
	
	Denote $A_1(L):=A^{p^n}\setminus A_0(L)$. For each $\alpha<\ell(u)$ let $i_{\alpha,L}<2$ be the unique index witnessing $A_{i_{\alpha,L}}(L)\in u(\alpha)$. Define $A^{n+1}_\alpha:=\diagonal_{L\in\mathcal{L}_{n+1}(p^n)} A_{i_\alpha, L}(L)$ and
	$$ \textstyle  A^{p^{n+1}}:=(\bigcup_{\alpha<\ell(u)}A_\alpha^{n+1})\cap A^{p^n}.$$
	We claim that $p^{n+1}:=\langle \langle u,A^{p^{n+1}}\rangle\rangle$ is the sought condition. To show this let $p^{n+1}\leq q\in D$ be with $\ell(q)\geq n+1$. We shall find a $u$-fat tree $T^q$ with height $n+1$ witnessing Clauses~(1) and (2) with respect to $p^{n+1}$.

	Since $p^n\leq q$  we can use our induction hypothesis to find a $u$-fat tree $T^q$ of height $n$ such that 
	for all $\vec{w}\in \mathcal{B}(T^q)$ there is $q_{\vec{w}}\in D$ with $$L_n(q)^\smallfrown (p^n\cat \vec{w})\leq^* q_{\vec{w}}.$$ In turn, $L_n(q)$ decomposes as $L_{n+1}(q)^\smallfrown \langle v, B\rangle$ for some $v\in A^{p^{n+1}}$. Thus, 
	 $$L_{n+1}(q)^\smallfrown (p^n\cat \langle v, \vec{w}\rangle)\leq^* q_{\vec{w}}$$ and $v\in A^{n+1}_\alpha$ for some $\alpha<\ell(u)$. This means that $$v\in A^{n+1}_\alpha\cap A_{0}(L_{n+1}(q))$$ and as a result $i_{\alpha, L_{n+1}(q)}=0$. Thus, by definition, for each $v\in A^{n+1}_\alpha$ with $L_{n+1}(q)\prec v$ there is a $u$-fat tree $T^{v}$ of height $n$ (with $v\prec w$ for all $\langle w\rangle\in T^v$) such that for each $\vec{w}\in \mathcal{B}(T^v)$ there is $q_{\vec{w},v}$ with
	 $$L_{n+1}(q)^\smallfrown (p^n\cat\langle v, \vec{w}\rangle)\leq^* q_{\vec{w}, v}\in D.$$ 
	Clearly $q_{\vec{w},v}$ is $\leq^*$-compatible with $L_{n+1}(q)^\smallfrown (p^{n+1}\cat\langle v, \vec{w}\rangle)$. Thus it is harmless to assume that $q_{\vec{w},v}$ is in fact $\leq^*$-stronger than this latter condition. Thus it suffices to take  $T^q:=\{\langle v\rangle^\smallfrown\vec{w}\mid v\in A^{n+1}_\alpha,\, L_{n+1}(q)\prec v,\,\vec{w}\in T^v\}$.
\end{proof}
The above procedure defines a $\leq^*$-decreasing sequence $\langle p^n\mid n<\omega\rangle$ and we can let $p^\omega$ a $\leq^*$-lower bound for it. This condition  allows us to get rid of the dependence on the lower parts. More formally:  If $p^\omega\leq q\in D$ is a condition (say with  $\ell(q)=n$) we can use the defining property of $p^n$ to find a $u$-fat tree $T^q$ of height $n$ such that for all $\vec{w}\in\mathcal{B}(T^q)$ there is $q_{\vec{w}}\in D$ such that  $p^n\cat \vec{w}\leq^* q_{\vec{w}}$ (here we have used that $L_{n}(q):=\emptyset$). Once again, since $q_{\vec{w}}$ and $p^\omega\cat\vec{w}$ are $\leq^*$-compatible we may assume that $p^\omega\cat\vec{w}\leq^* q_{\vec{w}}$.

\smallskip


Fix a condition $q\in D$  with $p^\omega\leq q$.
\begin{claim}
	There is $p^\omega\leq^* p^{*}$  and a $u$-fat tree $S$ of length $\ell(q)$ such that  $p^{*}\cat \vec{w}\in D$ for all $\vec{w}\in S$.
\end{claim}
\begin{proof}[Proof of claim]
Let $p^\omega\leq q\in D$ and $T$ be a $u$-fat tree of height $\ell+1$ for which there is $q_{\vec{w}}\in D$  with $p^\omega\cat\vec{w}\leq^* q_{\vec{w}}$. Note that $q_{\vec{w}}$ takes the form
$$\langle (w_0,B_0(\vec{w})),\dots, (w_{\ell}, B_\ell(\vec{w})),(u,B(\vec{w}))\rangle.$$
For each $j\leq \ell+1$ let us denote $$T\restriction j:=\{\vec{v}\in[\mathcal{MS}]^{<\omega}\mid \vec{v}=\vec{w}\restriction j\; \text{for some}\, \vec{w}\in T\}$$
and, for each $\vec{v}\in T\restriction j$, denote 
$$T_{\vec{v}}:=\{\vec{\mu}\in [\mathcal{MS}]^{<\omega}\mid \vec{v}{}\,^\smallfrown\vec{\mu}\in T\}.$$

Fix $i\leq \ell$. For each $\vec{z}\in T\restriction (i+1)$ we shall be interested in the map
$$B_i(\vec{z}{}\,^\smallfrown \langle\cdot\rangle)\colon T_{\vec{z}}\rightarrow \mathcal{F}(z_i)$$ given by $\vec\mu\mapsto B_i(\vec{z}{}\,^\smallfrown \vec{\mu})$. Since all the measures involved in $T_{\vec{z}}$ are $\kappa$-complete we can find a $u$-fat tree $S(\vec{z})\s T_{\vec{z}}$ of height $(\ell+1)-i$ such that when restricting the above map to it becomes constant with value $B_i(\vec{z})$.

Let $S_i$ denote the $u$-fat tree of height $\ell$ such that
\begin{itemize}
    \item $(S_i)\restriction(i+1)= T\restriction (i+1)$;
    \item $(S_i)_{\vec{z}}=S(\vec{z})$ for all  $\vec{z}\in T\restriction i+1$.
\end{itemize}
\smallskip

For each $\vec{v}\in T\restriction i$  there is $\alpha(\vec{v})<\ell(u)$ such that $\mathrm{Succ}_{T}(\vec{v})\in u(\alpha(\vec{v}))$ 
thus the set $B_i(\vec{v})_{<\alpha(\vec{v})}:=j(z\mapsto B_i(\vec{v}\,{}^\smallfrown \langle z\rangle))(u\restriction \alpha(\vec{v}))$ belongs to $\mathcal{F}(u\restriction \alpha(\vec{v}))$.

Similarly, we define the $u(\alpha(\vec{v}))$-large set $$B_i(\vec{v})_{=\alpha(\vec{v})}:=\{z\in \mathrm{Succ}_{T}(\vec{v})\mid B_i(\vec{v}\,{}^\smallfrown \langle z\rangle)= B_i(\vec{v})_{<\alpha(\vec{v})}\cap V_{\kappa_z} \}.$$
Finally, let $B_i(\vec{v})_{>\alpha(\vec{v})}:=\{z\in A^{p^\omega}\mid \ell(z)>\alpha(\vec{v})\}$ and $$B_i(\vec{v}):=B_i(\vec{v})_{<\alpha(\vec{v})}\cup B_i(\vec{v})_{=\alpha(\vec{v})}\cup B_i(\vec{v})_{>\alpha(\vec{v})}.$$ To amalgamate all of these $B_i(\vec{v})$ we take diagonal intersections; namely, 
$$B_i:=\{z\in \mathcal{MS}\mid \forall \vec{v}\in T\restriction i\, (\vec{v}\prec z\,\Rightarrow\, z\in B_i(\vec{v}))\}.$$
It is routine to check that $B_i\in \mathcal{F}(u)$. 

\smallskip

In the end, we let $B:=\bigcap_{i\leq \ell} B_i$ and $S:=(\bigcap_{i\leq \ell} S_i)\cap B$. We claim that   $p^*:=\langle (u,B)\rangle$ together with $S$ satisfy the statement of the claim. For this it suffices to show that if  $\vec{w}\in \mathcal{B}(S)$ then $q_{\vec{w}}\leq^*p^*\cat\vec{w}$. 

\smallskip

For each $\vec{w}\in \mathcal{B}(S)$ we have 
$$p^*\cat\vec{w}:=\langle (w_0, B^*_0),\dots, (w_n, B^*_\ell), (u, B^*_{\ell+1})\rangle$$
where $B^*_{i}:=\{v\in B\cap V_{\kappa_{w_{i}}}\mid w_{i-1}\prec v\,\wedge\, \ell(v)<\ell(w_{i})\}$ for $i\leq \ell+1$.\footnote{Here we agree that $w_{-1}=w_{\ell+1}$ are the empty sequence. }

Let us check that $B^*_{i}\s B_i(\vec{w})$ for $i\leq \ell$ -- the argument showing $B^*_{\ell+1}\s B(\vec{w})$ is similar. First,  $\langle w_{i+1},\dots, w_{\ell}\rangle\in (S_i)_{ \vec{w}\restriction i+1}$ so  $$B_i(\vec{w})=B_i(\vec{w}\restriction i+1).$$
Second, $w_i\in \mathrm{Succ}_{S_i}(\vec{w}\restriction i)=\mathrm{Succ}_{T}(\vec{w}\restriction i)\in u(\alpha(\vec{w}\restriction i))$. In particular, $\ell(w_i)=\alpha(\vec{w}\restriction i).$ Now let $v\in B^*_i$. By definition of diagonal intersection, $$v\in B\cap V_{\kappa_{w_i}}\s B_i(\vec{w}\restriction i)\cap V_{\kappa_{w_i}}.$$ Also, $v$ has length ${<}\alpha(\vec{w}\restriction i)$ so it belongs to $$B_i(\vec{w}\restriction i)_{<\alpha(\vec{w}\restriction i)}\cap V_{\kappa_{w_i}}= B_i(\vec{w}\restriction i+1)=B_i(\vec{w}).$$
For the first of these equalities we used that $w_i\in B_i(\vec{w}\restriction i)_{=\alpha(\vec{w}\restriction i)}$.

Thereby we have showed that $B^*_i\s B_i(\vec{w})$ as sought.
\end{proof}
The above claim completes the verification of the lemma.
\end{proof}

Let us now describe the main combinatorial object introduced by $\mathbb{R}_u$:
\begin{definition}
    Let $G\subseteq \mathbb{R}_{u}$ be a $V$-generic filter. Denote
    \begin{itemize}
        \item $\mathcal{MS}_G:=\{v\in \mathcal{MS}\mid \exists p\in G \  \exists i< \ell(p)\, \ v={u}^p_i\}$;
        \item $\Sigma_G:=\{\sigma_{v}\mid v\in \mathcal{MS}_G\}$;
        \item $C_G:=\{\kappa_v\mid v\in \mathcal{MS}_G\}$.
    \end{itemize}
\end{definition}
\begin{proposition}
    There is $\xi<\kappa$ such that $\Sigma_G\setminus \xi$ is a totally discontinuous sequence; namely, $\sup(\Sigma_G\cap\alpha)<\alpha$ for all  $\alpha\in \Sigma_G\setminus \xi$. 
    
    Moreover, for each such $\alpha$, $\Sigma_G\cap \alpha\subseteq \pi(\alpha)<\alpha$.\footnote{Recall that $\pi\colon \kappa\rightarrow\kappa$ is the function representing $\kappa_u$ via $\sigma_u$ (see p.\pageref{pi}).}
\end{proposition}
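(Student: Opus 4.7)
The plan is to exploit the non-normality hypothesis on $u$, which is reflected in $j(\pi)(\sigma)=\kappa<\sigma$. By elementarity, for each $\xi\in[1,\ell(u))$ the set
$$X:=\{v\in\mathcal{MS}\cap V_\kappa\mid \pi(\sigma_v)<\sigma_v\}$$
belongs to $u(\xi)$, since $u\restriction\xi\in j(X)$ reduces to $j(\pi)(\sigma)<\sigma$; hence $X\in\mathscr{F}(u)$. Likewise, for any fixed $\eta<\kappa$, the set $Y_\eta:=\{v\mid \pi(\sigma_v)>\eta\}$ lies in $\mathscr{F}(u)$, because $j(\pi)(\sigma)=\kappa>\eta$. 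A density argument inside the $\leq^*$-closed top coordinate then produces $p\in G$ with
$A^p_{\ell(p)}\subseteq X\cap Y_{\eta^p}$,
where $\eta^p:=\max_{i<\ell(p)}\sigma_{u^p_i}$. Set $\xi:=\max_{i<\ell(p)}\pi(\sigma_{u^p_i})+1<\kappa$.

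Fix $\alpha=\sigma_v\in\Sigma_G\setminus\xi$, witnessed by some $q\leq p$ in $G$ with $v=u^q_j$. Because the ordering on $\mathbb{R}_u$ builds $q$ from $p$ by iterated one-point extensions, $v$ must have been added at some level $i\leq\ell(p)$, so $v\in A^p_i$. If $i<\ell(p)$ then $v\prec u^p_i$ forces $\sigma_v<\pi(\sigma_{u^p_i})<\xi$, contradicting $\sigma_v\geq\xi$. Hence $v\in A^p_{\ell(p)}\subseteq X\cap Y_{\eta^p}$, which simultaneously yields $\pi(\alpha)<\alpha$ and $\pi(\alpha)>\eta^p$.

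For the \emph{moreover} clause, pick $\beta=\sigma_{v'}\in\Sigma_G\cap\alpha$. If $v'$ already appears in $p$ then $\beta\leq\eta^p<\pi(\alpha)$, and we are done. Otherwise, the same reasoning places $v'\in A^p_{\ell(p)}\subseteq X$. Choosing a common lower bound $r\in G$ of the conditions that introduce $v$ and $v'$, both measure sequences appear as entries of $r$ in $\prec$-order; if $v\prec v'$, then $\sigma_v<\pi(\sigma_{v'})<\sigma_{v'}$, contradicting $\sigma_{v'}<\sigma_v$. So $v'\prec v$ and therefore $\beta<\pi(\sigma_v)=\pi(\alpha)$.

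The main potential obstacle is checking, from the one-point-extension structure of the order, that a latecomer $v$ with $\sigma_v\geq\xi$ cannot be inserted at an intermediate level of the stem of $p$; the choice of $\xi$ in terms of the $\pi$-images of the existing $\sigma_{u^p_i}$'s (rather than those $\sigma$'s themselves) is designed precisely to block this possibility, and once it is blocked everything else follows from the membership $A^p_{\ell(p)}\subseteq X\cap Y_{\eta^p}$ together with the $\prec$-chain condition on stems.
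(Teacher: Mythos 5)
Your proof takes essentially the same approach as the paper's: both isolate the non-normality of $u(1)$ via the set $X=\{v\in\mathcal{MS}\mid \pi(\sigma_v)<\sigma_v\}\in\mathscr{F}(u)$ (using $j(\pi)(\sigma)=\kappa<\sigma$), pass by $\leq^*$-density to a condition $p\in G$ with $A^p_{\ell(p)}\subseteq X$, and then deduce both assertions from the $\prec$-increasing structure of stems. Two small remarks on your version. First, the auxiliary sets $Y_\eta$ are not really needed: for $v\in A^p_{\ell(p)}$, Definition~\ref{TreeRadin}(4) already forces $u^p_{\ell(p)-1}\prec v$, so $\pi(\sigma_v)>\sigma_{u^p_{\ell(p)-1}}$ and, via a rank computation using $u^p_i\prec u^p_{\ell(p)-1}$, $\pi(\sigma_v)>\sigma_{u^p_i}$ for all $i<\ell(p)$; the paper simply sets $\xi:=\sigma_{u^p_{\ell(p)-1}}$ and skips this. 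Second, there is a small imprecision in ``$v$ must have been added at some level $i\leq\ell(p)$, so $v\in A^p_i$'': with $\xi=\max_{i<\ell(p)}\pi(\sigma_{u^p_i})+1$ it can happen that $\sigma_{u^p_{\ell(p)-1}}\geq\xi$ (precisely when the top stem entry $u^p_{\ell(p)-1}$ already lies in $X$), in which case $\alpha$ may equal the $\sigma$-value of a pre-existing stem member of $p$ rather than that of a freshly inserted point of $A^p_{\ell(p)}$; the same issue recurs in your ``moreover'' paragraph where you assert ``the same reasoning places $v'\in A^p_{\ell(p)}$'' for $v'$ with $\sigma_{v'}<\xi$. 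These edge cases do not threaten the conclusion (the stem entry in question is in $X$, and earlier stem entries and insertions sit below $\pi(\sigma_{u^p_{\ell(p)-1}})\leq\pi(\alpha)$), but to make the reasoning airtight you should either enlarge $\xi$ to also exceed every $\sigma_{u^p_i}$ or treat the pre-existing stem entries as a separate (trivial) case.
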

\begin{proof}
Let us go for the moreover assertion. Recall that $j\colon V\rightarrow M$ is a constructing embedding for $u$ and that $j(\pi)(\sigma)=\kappa<\sigma$.  It follows that $X=\{v\in \mathcal{MS}\mid \pi(\sigma_v)<\sigma_v\}$ belongs to $\mathcal{F}(u)$. In particular, the set of conditions $p$ with $A^p_{\ell(p)}\s X$ is $\leq^*$-dense. Let $p\in G$ be a condition with that property and define $\xi:=\sigma_{u^p_{\ell(p)-1}}$. For each $\alpha\in \Sigma_G\setminus (\xi+1)$ there is $v\in \mathcal{MS}_G$ such that $\alpha=\sigma_v$ (note that $v$ must come from $X$). Let $q\in G$ witnessing this. For each $\beta\in (\Sigma_G\cap \alpha)\setminus (\xi+1)$ we can let $q\leq r$ in $G$ such that $\sigma_w=\beta$. Notice that because $\beta<\alpha$ it must be that $w$ is mentioned in $r$ before $v$ (and, once again, $w\in X$). By definition of the poset this implies that $w\prec v$, which in turn yields $\beta=\sigma_w<\pi(\sigma_v)=\pi(\alpha)$, as needed.
\end{proof}
\begin{remark}
    On the contrary, standard arguments show that $C_G$ is a club on $\kappa$. This is the Radin club introduced by the normal Radin  induced by $\pi$.
\end{remark}

Arguing similarly one can prove the next propositions:
\begin{proposition}\label{Prop: special for limits}
    Suppose that $\alpha<\ell(u)\leq\kappa$ and let $A\in\mathcal{F}(u)$. Then there is $\xi<\kappa$ such that $(\mathcal{MS}_G\cap \{v\in \mathcal{MS} \mid \ell(v)\geq \alpha\})\setminus V_\xi\subseteq A$.
\end{proposition}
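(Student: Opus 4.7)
The plan is to find by genericity a condition $p^*\in G$ whose top measure-one set is contained in $A$, and then take $\xi<\kappa$ large enough to dominate all the data in $p^*$ below the top coordinate.

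Concretely, I first verify that $D_A:=\{p\in\mathbb{R}_u\mid A^p_{\ell(p)}\subseteq A\}$ is $\leq^*$-dense in $\mathbb{R}_u$. Given any $p\in\mathbb{R}_u$, the hypothesis $A\in\mathscr{F}(u)=\bigcap_{1\leq \xi<\ell(u)}u(\xi)$ together with the closure of each ultrafilter $u(\xi)$ under finite intersections yields $A^p_{\ell(p)}\cap A\in\mathscr{F}(u)$; hence shrinking the top measure-one set of $p$ to $A^p_{\ell(p)}\cap A$ produces a direct extension $p^*\leq^* p$ lying in $D_A$. By genericity pick some $p^*\in G\cap D_A$ and let $\xi<\kappa$ be large enough that $u^{p^*}_i\in V_\xi$ and $A^{p^*}_i\subseteq V_{\kappa_{u^{p^*}_i}}\subseteq V_\xi$ hold for every $i<\ell(p^*)$; this is possible since $\ell(p^*)$ is finite and all the data involved lives in $V_\kappa$.

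I now claim this $\xi$ works. Take $v\in \mathcal{MS}_G$ with $v\notin V_\xi$ and $\ell(v)\geq\alpha$, and pick $q\geq p^*$ in $G$ with $v=u^q_j$ for some $j<\ell(q)$; in particular $v\in V_\kappa$, so $v\neq u$. The key structural fact about $\mathbb{R}_u$ is that any coordinate $u^q_j$ with $j<\ell(q)$ is either one of the originally specified $u^{p^*}_i$ (with $i<\ell(p^*)$), or else was introduced through a one-point extension and hence belongs to some $A^{p^*}_k$ with $k\leq \ell(p^*)$. The first alternative would force $v\in V_\xi$, contradicting our choice of $v$; and in the second, the case $k<\ell(p^*)$ would give $v\in A^{p^*}_k\subseteq V_\xi$, again a contradiction. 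Hence $v\in A^{p^*}_{\ell(p^*)}\subseteq A$, as desired.

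The main potential difficulty is the bookkeeping behind the structural claim that every new coordinate of an extension of $p^*$ comes from one of its measure-one sets; this is settled by a routine induction on the number of one-point extensions needed to reach $q$ from $p^*$, based on the definition of $p\cat v$ from Section~\ref{SectionNonNormalRadin}. I also note that the hypothesis $\ell(v)\geq\alpha$ in the statement is not actually used by the present argument when $A\in\mathscr{F}(u)$ in the strong sense of the definition; it is presumably recorded for applications in which $A$ is only known to be measure-one for the tail ultrafilters $u(\xi)$ with $\xi\geq\alpha$.
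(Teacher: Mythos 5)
Your proof is correct and uses the same density-then-bookkeeping strategy that the paper applies to the preceding proposition about $\Sigma_G$ (for the present proposition the paper simply remarks that one argues similarly). The $\leq^*$-density of $D_A$, the choice of $\xi$ bounding the lower data of $p^*$, and the structural observation that every coordinate of an extension of $p^*$ other than $u$ must come either from a lower coordinate of $p^*$ or from one of its measure-one sets, are all exactly the ingredients the paper has in mind.

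Your aside about the hypothesis $\ell(v)\geq\alpha$ is accurate and, in fact, relevant to how the proposition is later invoked: in the application inside Theorem~\ref{ManyCohens}, the set $X$ there contains no length-$1$ measure sequences, so it lies only in the tail ultrafilters $u(\xi)$ for $\xi\geq 2$ rather than in the full $\mathscr{F}(u)$ as literally defined. Your argument adapts verbatim to that weaker hypothesis: replace $A$ in the density step by $A\cup\{v\in\mathcal{MS}\mid\ell(v)<\alpha\}$, which does lie in $\mathscr{F}(u)$, shrink the top measure-one set of $p$ into it, and then the clause $\ell(v)\geq\alpha$ on $v\in\mathcal{MS}_G$ does the remaining work of landing $v$ in $A$ rather than in the length-${<}\alpha$ padding.
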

\begin{proposition}\label{prop: limit index}
    Suppose that $v\in\mathcal{MS}_G$ is such that $\kappa_v$ has limit index in the natural enumeration of $C_G$ then $\ell(v)>1$. 
\end{proposition}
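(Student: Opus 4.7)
The plan is to prove the contrapositive: if $\ell(v)=1$ then $\kappa_v$ is either the minimum of $C_G$ or has an immediate predecessor in $C_G$, and hence cannot have limit index in the natural enumeration.

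First, I would fix $p\in G$ witnessing $v\in\mathcal{MS}_G$; say $v=u^p_i$ for some $i<\ell(p)$. The hypothesis $\ell(v)=1$ forces $\mathscr{F}(v)=\{\emptyset\}$ by definition of the associated filter. Since Definition~\ref{TreeRadin}(2) requires $A^p_i\in\mathscr{F}(u^p_i)=\mathscr{F}(v)$, we get $A^p_i=\emptyset$. The key structural consequence I want to extract is that $p$ forces $C_G$ to be empty in the open interval $(\kappa_{u^p_{i-1}},\kappa_v)$, where by convention we set $\kappa_{u^p_{-1}}:=0$.

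To verify this, I would argue by induction on the number of one-point extensions used to produce an arbitrary $q\leq p$ (recalling that every extension factors as finitely many one-point extensions followed by a $\leq^*$-strengthening, and that $\leq^*$ only shrinks measure one sets). Consider a one-point extension $p\cat w$ with $w\in A^p_j$. By the definition of the measure one sets and of the forcing order, $w\prec u^p_j$ (so $\kappa_w<\kappa_{u^p_j}$) while $u^p_{j-1}\prec w$ (so $\kappa_w>\kappa_{u^p_{j-1}}$). A short case analysis then suffices:
\begin{enumerate}
\item if $j<i$, then $\kappa_w<\kappa_{u^p_j}\leq \kappa_{u^p_{i-1}}$;
\item if $j=i$, this is impossible since $A^p_i=\emptyset$;
\item if $j>i$, then $\kappa_w>\kappa_{u^p_{j-1}}\geq \kappa_{u^p_i}=\kappa_v$.
\end{enumerate}
In every case $\kappa_w\notin(\kappa_{u^p_{i-1}},\kappa_v)$. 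Iterating, the same holds for any $q\leq p$.

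Since $p\in G$, by genericity $C_G\cap(\kappa_{u^p_{i-1}},\kappa_v)=\emptyset$ in $V[G]$. Therefore, in the natural enumeration of $C_G$, the point $\kappa_v$ is either the very first element (when $i=0$, using the convention $\kappa_{u^p_{-1}}=0$) or the immediate successor of $\kappa_{u^p_{i-1}}$, so its index is either $0$ or a successor ordinal. This contradicts the hypothesis that $\kappa_v$ has limit index, completing the argument. I do not foresee a substantive obstacle; the only mildly delicate issue is confirming that no iterated one-point extension can sneak a point into $(\kappa_{u^p_{i-1}},\kappa_v)$, which is handled uniformly by the same case analysis applied to the updated slot structure after each one-point extension.
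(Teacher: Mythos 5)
Your proposal is correct and takes essentially the approach the paper has in mind (the paper itself only says ``arguing similarly'' without giving details). Taking the contrapositive and observing that $\ell(v)=1$ forces $A^p_i=\emptyset$, so that no one-point extension can insert a measure sequence into slot $i$, is exactly the right observation, and the slot-by-slot case analysis correctly shows that the open interval $(\kappa_{u^p_{i-1}},\kappa_v)$ is forced to be disjoint from $C_G$.

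One small point worth making explicit: when you write ``$w\prec u^p_j$ (so $\kappa_w<\kappa_{u^p_j}$)'' and ``$u^p_{j-1}\prec w$ (so $\kappa_w>\kappa_{u^p_{j-1}}$),'' the relation $\prec$ is defined via membership in $V_{\kappa_{(\cdot)}}$ and directly yields inequalities among the $\sigma$'s, not the $\kappa$'s: $v\prec w$ gives $\sigma_v<\kappa_w$, but $\kappa_v=\pi(\sigma_v)$ need not a priori lie below $\kappa_w$. To pass from $\sigma$-inequalities to the $\kappa$-inequalities you need $\kappa_v\le \sigma_v$ for the measure sequences involved, which holds on the $\mathscr{F}(u)$-large set $X=\{v\in\mathcal{MS}\mid\pi(\sigma_v)<\sigma_v\}$ exhibited in the proof of the immediately preceding proposition. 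Since conditions with all their measure one sets (and lower parts) contained in $X$ are $\leq^*$-dense, you may assume $p$ has this form, after which your case analysis goes through verbatim. With this remark added, the argument is complete and matches the intended proof.
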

\begin{corollary}\label{cor: special for limits}
Suppose that $2<\ell(u)\leq \kappa$ and let $A\in\mathcal{F}(u)$. Then there is $\xi<\kappa$ such that $\{\kappa_v\in C_G\mid \text{$v$ has limit index in $C_G$}\}\setminus \xi\subseteq \{\kappa_v\mid v\in A\}$.
\end{corollary}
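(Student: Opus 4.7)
The plan is to derive this corollary as a direct combination of the two preceding propositions, with only a mild bookkeeping step to translate the conclusion of Proposition~\ref{Prop: special for limits} into a statement about the values $\kappa_v$ alone. First, I would invoke Proposition~\ref{Prop: special for limits} with parameter $\alpha:=2$, which is legitimate since the hypothesis $2<\ell(u)\leq\kappa$ guarantees $\alpha<\ell(u)$. This yields some $\xi_0<\kappa$ such that every $v\in\mathcal{MS}_G$ with $\ell(v)\geq 2$ and $v\notin V_{\xi_0}$ already lies in $A$. This is the ``tail containment'' piece.

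Next, I would combine this with Proposition~\ref{prop: limit index}: whenever $v\in\mathcal{MS}_G$ is such that $\kappa_v$ sits at a limit index in the natural enumeration of $C_G$, the proposition provides $\ell(v)>1$, hence $\ell(v)\geq 2$. This exactly matches the length constraint produced by the first step, so the only remaining issue is that Proposition~\ref{Prop: special for limits} requires $v\notin V_{\xi_0}$, while the statement we want to prove only speaks of $\kappa_v$.

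The translation from ``$v\notin V_{\xi_0}$'' to a lower bound on $\kappa_v$ is routine: each $v\in\mathcal{MS}\cap V_\kappa$ is a sequence whose entries $v(i)$ for $i\geq 1$ are $\kappa_v$-complete ultrafilters concentrating on $V_{\kappa_v}$, so the rank of $v$ lies in the interval $(\kappa_v,\kappa_v+\omega)$; in particular $\kappa_v\geq \xi_0$ forces $v\notin V_{\xi_0}$. Setting $\xi:=\xi_0$ then completes the argument, since any limit-index $\kappa_v\in C_G$ exceeding $\xi$ will satisfy both $\ell(v)\geq 2$ and $v\notin V_{\xi_0}$, placing $v\in A$ and therefore $\kappa_v\in\{\kappa_w\mid w\in A\}$. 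I do not foresee any serious obstacle here; the whole content of the corollary is to phrase the conjunction of the two preceding propositions in terms of the club $C_G$ alone, and the rank computation is the only piece that is not entirely immediate.
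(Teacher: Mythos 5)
Your proof is correct and is the natural combination of Propositions~\ref{Prop: special for limits} and~\ref{prop: limit index} that the paper leaves implicit (the corollary is stated without proof after the remark ``arguing similarly one can prove the next propositions''). One small imprecision: the rank of $v$ need not lie in $(\kappa_v,\kappa_v+\omega)$, since $v(0)=\langle\sigma_v\rangle$ with $\sigma_v>\kappa_v$ (and $\sigma_v$ could be much larger than $\kappa_v$); however all you actually use is that $\mathrm{rank}(v)>\kappa_v$, which holds because $v(1)$ is an ultrafilter on $V_{\kappa_v}$ and so has rank at least $\kappa_v+1$, and this lower bound is exactly what gives the implication $\kappa_v\geq\xi_0\Rightarrow v\notin V_{\xi_0}$.
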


\section{Adding Cohen functions to every limit point}\label{sec: AddingCohends}

Suppose that $\kappa$ is a $\mathcal{P}_2\kappa$-hypermeasurable cardinal. In this section we employ our Radin forcing from \S\ref{SectionNonNormalRadin} to shoot a club $C\s \kappa$ with $\text{otp}(C)=\omega_1$ whose limit points $\alpha$ carry a  Cohen generic set $c_\alpha\s \alpha$. Our main result here is Theorem~\ref{ManyCohens}. The next prelimminary result paves the way to Theorem~\ref{ManyCohens}.
\begin{lemma}\label{Preparation}
    Assume the $\mathrm{GCH}$ holds and that $\kappa$ is $\mathcal{P}_2\kappa$-hypermeasurable cardinal. There is a cofinality-preserving generic extension $V[G]$ where:
    \begin{enumerate}
        \item $\mathrm{GCH}$ holds;
        \item There is a $\mathcal{P}_2\kappa$-hypermeasurable embedding $j:V[G]\rightarrow M[H]$ and an ordinal  $\sigma\in (\kappa,j(\kappa))$ such that the (non-normal) measure $$W:=\{X\in \mathcal{P}(\kappa)^{V[G]}\mid \sigma\in j(X)\}$$ witnesses that its  Tree Prikry forcing $\mathbb{T}_W$ projects onto $\Add(\kappa,1)$.
    \end{enumerate}
\end{lemma}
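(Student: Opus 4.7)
The plan is to adapt the ``guiding generic'' paradigm from Benhamou-Gitik-Hayut \cite{TomYairMoti}, which produces non-normal $\kappa$-complete ultrafilters whose Tree Prikry forcings project onto $\Add(\kappa,1)$. The only novelty here is that the preparatory iteration must be arranged so as to lift a $\mathcal{P}_2\kappa$-hypermeasurable embedding to the extension, rather than merely a measurable embedding.

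First I would fix a $\mathcal{P}_2\kappa$-hypermeasurable embedding $j\colon V\to M$ and define an Easton-support iteration $\mathbb{P}=\langle \mathbb{P}_\alpha,\dot{\mathbb{Q}}_\alpha\mid\alpha<\kappa\rangle$ in which, at each inaccessible stage $\alpha$, the forcing $\dot{\mathbb{Q}}_\alpha$ is the lottery sum of the trivial poset and $\Add(\alpha,1)$ (or a similar device), providing the flexibility required to install a guiding generic at $\kappa$ when lifting $j$. Let $G$ be $V$-generic for $\mathbb{P}*\dot{\Add}(\kappa,1)$, so $V[G]$ contains a Cohen generic $c\colon\kappa\to 2$. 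Under $\mathrm{GCH}$ and the usual chain-condition and closure bookkeeping, all cardinals and $\mathrm{GCH}$ are preserved in $V[G]$.

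Next I would lift $j$ to an elementary $\tilde j\colon V[G]\to M[G*H]$ where $H$ is generic for the tail of $j(\mathbb{P})*j(\dot{\Add}(\kappa,1))$ above stage $\kappa$. This tail is ${<}j(\kappa)$-closed in $M[G*c]$ and, by $\mathrm{GCH}$, has size at most $j(\kappa)$ from $M$'s perspective, so an appropriate $H$ can be obtained by the standard diagonal construction in a mild outer model which absorbs back into $V[G]$. The guiding generic at the $\kappa$-th coordinate of $j(\mathbb{P})$ is chosen to be $c$ itself, so that $\tilde j$ commutes with $c$; this yields a lift witnessing that $\kappa$ remains $\mathcal{P}_2\kappa$-hypermeasurable in $V[G]$.

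Finally, using $\mathrm{GCH}$ in $V[G]$, let $e\colon\kappa\to \Add(\kappa,1)^{V[G]}$ be a bijection, and choose $\sigma\in(\kappa,j(\kappa))$ so that $\tilde j(e)(\sigma)$ extends the Cohen generic $c$ in the order of $\tilde j(\Add(\kappa,1))$. Set $W:=\{X\in\mathcal{P}(\kappa)^{V[G]}\mid\sigma\in\tilde j(X)\}$; this is a $\kappa$-complete non-normal ultrafilter on $\kappa$, and the map $\pi\colon\mathbb{T}_W\to\Add(\kappa,1)$ sending a Tree-Prikry condition with stem $\langle\nu_0,\ldots,\nu_{n-1}\rangle$ to $e(\nu_{n-1})$ (after suitably pruning the tree so that the images $e(\nu_i)$ form a chain in the Cohen order) is the desired projection. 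The main obstacle is ensuring that $\sigma$ can be chosen so that the derived generic filter in $\Add(\kappa,1)$ coincides, modulo $\pi$, with the Cohen generic $c$ added at stage $\kappa$: this is exactly what the lottery-plus-guiding recipe of \cite{TomYairMoti} delivers, and I expect to invoke it essentially verbatim.
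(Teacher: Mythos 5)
Your proposal goes wrong at the start: by forcing with $\mathbb{P}\ast\dot{\Add}(\kappa,1)$, the Cohen generic $c$ that you single out lives \emph{inside} the eventual ground model $V[G]$. A projection $\pi\colon\mathbb{T}_W\to\Add(\kappa,1)^{V[G]}$ must send a $\mathbb{T}_W$-generic to a filter that is generic \emph{over} $V[G]$; but you explicitly engineer $\sigma$ so that the glued function "coincides, modulo $\pi$, with the Cohen generic $c$ added at stage $\kappa$," and $c\in V[G]$ cannot be $V[G]$-generic. (If instead the glued function is a proper subfunction of $c$ it is not total, hence not an $\Add(\kappa,1)$-generic; and if it is to disagree with $c$ at some point, then the requirement you impose --- that the images $e(\nu_i)$ form a chain below $\tilde j(e)(\sigma)\supseteq c$ --- forces them all beneath $c$, a contradiction.) The paper avoids this pitfall entirely: the iteration is just $\mathbb{P}_\kappa$, stopping at $\kappa$, and in the lift the lottery opts for the \emph{trivial} poset at $\kappa$. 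The Cohen option is exercised only at the image point $j(\kappa)$, inside the second iterated ultrapower $j_2:=j_{1,2}\circ j$, and the seed is $\sigma:=j(\kappa)$. This makes $W$ extend $\mathrm{Cub}_\kappa$ and concentrate on the set $\mathcal{C}$ of $\alpha<\kappa$ that carry local Cohen generics $f_\alpha$ over $V[G_\alpha]$; the projection is then the amalgamation $f^*:=\bigcup_{n\ge n_*}f_{\kappa_n}\restriction[\kappa_{n-1},\kappa_n)$ along the Prikry sequence, which, unlike your $c$, genuinely does not lie in $V[G]$.

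Two further issues. First, your projection map $\langle\nu_0,\dots,\nu_{n-1}\rangle\mapsto e(\nu_{n-1})$ is asserted to be a projection "after suitably pruning the tree," but you give no argument that the $W$-splitting sets can be arranged so that the images form an unbounded chain and realize density in the Cohen order; the paper sidesteps this by verifying genericity of the glued function directly against arbitrary maximal antichains (Claim~\ref{GluingTheCohens-PrikryCase}). Second, "an appropriate $H$ can be obtained by the standard diagonal construction in a mild outer model which absorbs back into $V[G]$" is not a proof: under $\mathrm{GCH}$ the tail of $j(\mathbb{P}_\kappa)/G$ has $\kappa^{++}$-many maximal antichains from $V$'s perspective but is only $\kappa^+$-closed in $V$ (since $M$ is merely $\kappa$-closed), so the direct diagonalization is short by a full cardinal. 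The paper's route --- factoring through the normal ultrapower $i\colon V\to N$, where the tail has only $\kappa^+$-many antichains so a genuine length-$\kappa^+$ diagonalization works in $V[G]$, and then transferring via the factor embedding $k\colon N\to M$ using that $k$ has width ${\le}\kappa^{++}_N$ --- is essential, not a stylistic choice.
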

\begin{proof}
   Let us begin by fixing an elementary embedding $j\colon V\rightarrow M$ arising from a $(\kappa,\kappa^{++})$-extender $E$ -- this is possible in that $\kappa$ is $\mathcal{P}_2\kappa$-hypermeasurable. Let us denote by $i\colon V\rightarrow N$ the ultrapower by the normal measure on $\kappa$ inferred from $j$. As usual, this yields a factor embedding $k\colon N\rightarrow M$ defined by $k(i(f)(\kappa)):=j(f)(\kappa).$ Standard arguments involving the GCH show that $k$ has width ${\leq}\kappa^{++}_N$ and that $\crit(k)=\kappa^{++}_N$.

    \smallskip

     We go for the forcing preparation spelled out in \cite[\S~7]{TomMaster}. Namely, our forcing extension will be given by the Easton-supported iteration 
     $$\langle \mathbb{P}_\alpha, \dot{\mathbb{Q}}_\beta\mid \alpha\leq \kappa,\, \beta<\kappa\rangle$$
     defined as follows: For each $\alpha<\kappa$, $\dot{\mathbb{Q}}_\alpha$ is a $\mathbb{P}_\alpha$-name for the trivial forcing unless $\alpha$ is inaccessible, in which case it is a $\mathbb{P}_\alpha$-name for the lottery sum of $$\{\Add(\alpha,1),\{\one\}\}.$$

     Let $G\s \mathbb{P}_\kappa$ a $V$-generic filter. It is easy to lift the embeddings $i\colon V\rightarrow N$ and $k\colon N\rightarrow M$ inside $V[G]$.  Indeed, $i$ lifts to $i\colon V[G]\rightarrow N[G\ast\{\one\}\ast H]$ where $H\in V[G]$ is a $N[G]$-generic filter for the tail poset $i(\mathbb{P}_\kappa)/G\ast \{\one\}.$\footnote{The construction of a $H$ in $V[G]$ is standard employing the GCH and the high degree of closure of the tail forcing.} Since this tail forcing is more close than the width of the embedding $k$ one can lift this latter to $k\colon N[G\ast\{\one\}\ast H]\rightarrow M[G\ast \{0\}\ast k``H]$. Incidentally,
     $$j\colon V[G]\rightarrow M[G\ast \{0\}\ast k``H].$$
     For reasons that will become clear shortly we have to prepare a $M[j(G)]$-generic filter $g_{j(\kappa)}$ for $\Add(j(\kappa),1)_{M[j(G)]}$. This is done as before: First, one can cook up a $N[i(G)]$-generic $g_{i(\kappa)}\in V[G]$ for  $\Add(i(\kappa),1)_{N[j(G)]}$ (for this one employs the GCH). Second, we can trasnfer $g_{i(\kappa)}$ to $g_{j(\kappa)}$ through $k$; clearly, $g_{j(\kappa)}\in V[G]$. In addition, we can alter $g_{j(\kappa)}$ so that $g_{j(\kappa)}(0)=\kappa.$

     \smallskip

     Now we go to the second ultrapower of our initial extender $E$; specifically, let us consider $j_{1,2}\colon M\rightarrow M_2\simeq \mathrm{Ult}(M,j(E))$. Eventually, we would like to lift $j_2:=j_{1,2}\circ j$ and for this it would suffice to lift $j_{1,2}$ under $j(\mathbb{P}_\kappa)$ (as the other embedding has been already lifted).  To this end, note that
     $$j_2(\mathbb{P}_\kappa)\downarrow p \simeq j(\mathbb{P}_\kappa)\ast \dot{\Add}(j(\kappa),1)\ast \dot{\mathbb{T}}_{(j(\kappa), j_{2}(\kappa))}$$
     where $p$ is the condition opting for Cohen forcing at stage $j(\kappa)$.

     Clearly, $\Add(j(\kappa),1)_{M_2[j(G)]}=\Add(j(\kappa),1)_{M_1[j(G)]}$ so $j_{1,2}$ lifts to 
     $$j_{1,2}\colon M[j(G)]\rightarrow M_2[j(G)\ast g_{j(\kappa)}\ast T]$$
     for some generic  $T$ for the tail forcing. As before, we can construct this $T$ inside $M[j(G)]$ by factoring through the normal ultrapower of $j_{1,2}$ and using the GCH in the model $M[j(G)].$ Therefore, the above lives inside $V[G]$.

\smallskip

This produces an elementary embedding $j_2\colon V[G]\rightarrow M_2[j_2(G)]$ such that:

\begin{claim}\label{ClaimPreparingManyCohens}
    The following hold for $j_2$ in $V[G]$:
    \begin{enumerate}
        \item $j_2$ is a $\mathcal{P}_2\kappa$-hypermeasurable embedding;
        \item Let $W:=\{X\in \mathcal{P}(\kappa)^{V[G]}\mid j(\kappa)\in j_2(X)\}$. Then, $\mathrm{Cub}_\kappa\s W$ and  $$\mathcal{C}:=\{\alpha<\kappa\mid \exists f_\alpha\, (f_\alpha\,\text{is Cohen generic over $V[G_\alpha]$})\}\in W;$$
    
        \item $j_2(\pi)(j(\kappa))=\kappa$ where $\pi\colon \kappa\rightarrow \kappa$ is the  function defined as
        $$
        \pi(\alpha):=\begin{cases}
           f_\alpha(0), & \text{if $\alpha\in\mathcal{C}$;}\\
           0, & \text{otherwise.}
        \end{cases}
        $$
    \end{enumerate}
\end{claim}
 \begin{proof}[Proof of claim]

        (1) Note that $V_{\kappa+2}\s M_2$ because $V_{\kappa+2}\s M_1$ and both $M_1$ and $M_2$ agree up to $(V_{j(\kappa)})^{M_1}$. Also, $\mathbb{P}_\kappa$ is $\kappa$-cc so that $V[G]_{\kappa+2}=V_{\kappa+2}[G]$ From this we can easily infer that $V[G]_{\kappa+2}\s M_2[j_2(G)]$.

        (2) Let $C\in (\mathrm{Cub}_\kappa)^{V[G]}$. By $\kappa$-ccness of $\mathbb{P}_\kappa$ there is $D\s C$ in $(\mathrm{Cub}_\kappa)^V$. By normality, $j(D)$ belongs to the normal measure on $j(\kappa)$ inferred (in $V$) from $j_{1,2}$. From this it follows right away that $C\in W$.

        The claim about $\mathcal{C}$ is evident because $g_{j(\kappa)}$ was chosen to be a Cohen generic over $M_{2}[j(G)]=M_{2}[j_2(G)_{j(\kappa)}]$.

        (3) This follows from our choice that $g_{j(\kappa)}(0)=\kappa.$
    \end{proof}
To complete proof of Lemma~\ref{Preparation} it remains to show that (in $V[G]$) the Tree Prikry forcing $\mathbb{T}_W$ corresponding to $W$ projects onto $\Add(\kappa,1)_{V[G]}$. 
  

 \begin{claim}\label{GluingTheCohens-PrikryCase}
     There is a projection between $\mathbb{T}_{W}$ and $\Add(\kappa,1)_{V[G]}$.
 \end{claim}
 \begin{proof}
 
 Let $\langle \kappa_n\mid n<\omega\rangle$ be a Prikry sequence over $V[G]$.  We shall show that this induces a $V[G]$-generic for the Cohen poset. Let $A\in V[G]$ be a maximal antichain for $\Add(\kappa,1)_{V[G]}$. For each $\alpha<\kappa$ regular and $p\in \Add(\alpha,1)_{V[G]}$ let $\beta(p)<\alpha$ be the first ordinal such that $p$ is compatible with a member of $A\cap \Add(\beta(p),1)_{V[G]}$. Define a function $f\colon \kappa\rightarrow \kappa$ by
 $$\textstyle f(\alpha):=\sup_{p\in \Add(\alpha,1)_{V[G]}} \beta(p)$$
 whenever $\alpha$ is a regular cardinal; declare it to be $0$ otherwise.

 Let $C(f)$ be the closure points of $f$. This set is a club in $V[G]$ so $C\cap \mathcal{C}\in W$ (here $\mathcal{C}$ is as in the previous claim). Let $1\leq n_*<\omega$ be such that $\kappa_n\in C\cap \mathcal{C}$ for all $n\geq n_*$ and define (in $V[G]$) 
 $$\textstyle f^*:=\bigcup_{n\geq n_*} f_{\delta_n}\restriction [\delta_{n-1},\delta_n).$$

 One can show that $f^*$ is $\Add(\kappa,1)_{V[G]}$-generic. We refer the reader to  \cite[Proposition 7.3]{TomMaster} for details. 
 \end{proof}
The above claim completes the proof of the lemma.
\end{proof}

 Let $V^*$ denote the model obtained in Lemma~\ref{Preparation} and   $j^*\colon V^*\rightarrow M^*$ the corresponding $\mathcal{P}_2\kappa$-hypermeasurable embedding.  From now on $V^*$ will be  our ground model.  Using $j^*$, $\pi$ and $\sigma$ from Lemma~\ref{Preparation} we derive the corresponding measure sequence $u$; namely, $u(0):=\l \sigma\r$ and for each $\xi\geq 1$,  
 $$u(\xi):=\{X\s V_\kappa\mid u\restriction \xi\in j^*(X)\}.$$
Note that $u(1)$ is essentially $W$; more precisely, $$\text{$X\in W$ if and only if $\{\langle \alpha\rangle\mid \alpha\in X\}\in u(1)$.}$$
\begin{theorem}\label{ManyCohens}
    Let $G^*\subseteq\mathbb{R}_u$ be $V^*$-generic. For all except bounded-many $\alpha\in \lim(C_{G^*})\cup\{\kappa\}$ with $\mathrm{cf}(\alpha)^{{V^*[G^*]}}=\omega$ there is a $V^*$-generic Cohen function $f_\alpha\in V^*[G^*]$ for $\Add(\alpha,1)_{V^*}$.
\end{theorem}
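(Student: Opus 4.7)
The plan is to localize the projection phenomenon of Lemma~\ref{Preparation} from the top cardinal $\kappa$ to all but boundedly many $\alpha\in \lim(C_{G^*})$ of cofinality $\omega$, by combining the factorization of $\mathbb{R}_u$ (Lemma~\ref{Factoring}) with the reflection built into the measure sequences of $\mathcal{MS}$.

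\smallskip

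\textbf{Step 1: cofinality $\omega$ forces $\ell(v)=2$.} Fix $\alpha\in \lim(C_{G^*})$ with $\mathrm{cf}^{V^*[G^*]}(\alpha)=\omega$. By Proposition~\ref{prop: limit index} we may write $\alpha=\kappa_v$ with $v\in \mathcal{MS}_{G^*}$ and $\ell(v)>1$. If $\ell(v)>2$, then an inner application of Radin's argument (using the $\pi$-image of the Radin club inside $v$) shows that $\alpha$ becomes a limit of limits of $C_{G^*}\cap \alpha$, hence $\mathrm{cf}^{V^*[G^*]}(\alpha)>\omega$; contradiction. So $\ell(v)=2$.

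\smallskip

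\textbf{Step 2: factor and identify Tree-Prikry.} By Lemma~\ref{Factoring} applied to a condition $p\in G^*$ with $u^p_i=v$, the forcing below $v$ is $\mathbb{R}_v/p\!\restriction\! i+1$. Since $\ell(v)=2$, conditions in $\mathbb{R}_v$ carry a single non-trivial measure, namely the $\kappa_v$-complete ultrafilter $v(1)$ on $V_{\kappa_v}$, so $\mathbb{R}_v$ is forcing-equivalent to the Tree-Prikry forcing $\mathbb{T}_{v(1)}$ (modulo the order-$0$ stem). Thus in $V^*[G^*]$ we have, at level $\alpha$, a Tree-Prikry generic $\langle w_n\mid n<\omega\rangle$ with $w_n\in V_\alpha$ of length $1$; write $w_n=\langle\beta_n\rangle$.

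\smallskip

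\textbf{Step 3: reflection of the projection property.} By construction, $v$ is a member of $\mathcal{MS}$ so there is an embedding $j_v\colon V^*\to M_v$ with $\crit(j_v)=\kappa_v$ and $v(1)=\{X\s V_{\kappa_v}\mid \langle\sigma_v\rangle\in j_v(X)\}$. The statement
\begin{quote}
``$(V_\kappa,\pi,\mathcal{C},W)$ satisfies: $\mathcal{C}\in W$, $\pi\restriction \mathcal{C}$ sends $\alpha\mapsto f_\alpha(0)$, and $\mathbb{T}_W$ projects onto $\Add(\kappa,1)_{V^*}$ via the gluing of Claim~\ref{GluingTheCohens-PrikryCase}''
\end{quote}
is a first-order statement about $V^*$ that is verified inside $j^*(V^*)=M^*$ at the seed $\sigma$ witnessing $u(1)$ (using Claim~\ref{ClaimPreparingManyCohens}). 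By the defining reflection clauses of $\mathcal{MS}_n$ for all $n<\omega$, and by the coherence of $u$ inherent in the construction, this statement reflects to a $u(\xi)$-large set of $v$'s for every $2\leq \xi<\ell(u)$: namely, for a measure-one set of $v$, the measure $v(1)$ contains the localized set $\mathcal{C}\cap V_{\kappa_v}$, and $j_v(\pi\restriction \kappa_v)(\sigma_v)=\kappa_v$. Call this set $\mathcal{G}\in \bigcap\vec{\mathscr{F}}(u)$.

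\smallskip

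\textbf{Step 4: running the gluing argument at $\alpha$.} Using Corollary~\ref{cor: special for limits} applied to $\mathcal{G}$, for all but boundedly many $\alpha\in \lim(C_{G^*})$ with $\mathrm{cf}^{V^*[G^*]}(\alpha)=\omega$ we have $\alpha=\kappa_v$ for some $v\in \mathcal{G}$. For such $v$, the Tree-Prikry sequence $\langle \beta_n\mid n<\omega\rangle$ of Step~2 eventually lies in $\mathcal{C}\cap \alpha$, because $\mathcal{C}\cap V_{\kappa_v}\in v(1)$. Repeating verbatim the argument of Claim~\ref{GluingTheCohens-PrikryCase} at the cardinal $\alpha$ in place of $\kappa$, with the Cohen functions $\langle f_{\beta_n}\mid n\geq n_*\rangle$ available from the preparation of Lemma~\ref{Preparation}, we form
\[
f_\alpha:=\bigcup_{n\geq n_*} f_{\beta_n}\restriction [\gamma_{n-1},\gamma_n),
\]
where the $\gamma_n$'s are closure points of an auxiliary density-function defined from a generic enumeration of maximal antichains of $\Add(\alpha,1)_{V^*}$. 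This $f_\alpha$ is then a $V^*$-generic for $\Add(\alpha,1)_{V^*}$.

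\smallskip

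\textbf{Main obstacle.} The hard part is Step~3: one must verify that the first-order property witnessed at the top, and involving both the predicate $\mathcal{C}$ and the representative $\pi$, is reflected to $v(1)$ for $u(\xi)$-measure one $v$. This amounts to an unfolding of the definition of $\mathcal{MS}$ together with the clause $j^*(\pi)(\sigma)=\kappa$ chosen at the level of the preparation, to show that the "preparation data" at $\alpha<\kappa$ is internally coherent with the local embedding $j_v$. Once this is in hand, Steps~1, 2 and 4 are mechanical applications of the results already proved in \S\ref{SectionNonNormalRadin} together with the gluing lemma of \S\ref{sec: AddingCohends}.
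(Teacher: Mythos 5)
Your Step 1 contains a genuine error that propagates into Step 2. You claim that $\mathrm{cf}^{V^*[G^*]}(\alpha)=\omega$ forces $\ell(v)=2$, with the justification that if $\ell(v)>2$ then $\alpha$ is ``a limit of limits'' of $C_{G^*}\cap\alpha$ and hence has uncountable cofinality. This inference is false: being a limit of limits does not entail uncountable cofinality. If $\ell(v)=3$, the generic club below $\kappa_v$ has order type roughly $\omega^2$, which is a limit of limits yet has cofinality $\omega$; more generally $\mathrm{cf}^{V^*[G^*]}(\kappa_v)=\omega$ holds whenever $\mathrm{cf}(\ell(v))\le\omega$, not just when $\ell(v)=2$. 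Consequently Step 2's identification of $\mathbb{R}_v$ with $\mathbb{T}_{v(1)}$ (valid only when $\ell(v)=2$) is not available in general.

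The paper sidesteps this entirely: it never restricts $\ell(v)$, only uses that Proposition~\ref{prop: limit index} yields $\ell(v)>1$, and then shows \emph{directly} (via the Mathias criterion from \cite{TomTreePrikry}) that the $\omega$-sequence of order-$0$ points of $C_{G^*}$ converging to $\alpha$ is $\mathbb{T}_{v(1)}$-generic over $V^*$ --- even when $\ell(v)$ is much larger than $2$. Concretely, for any $A\in v(1)$ one shrinks the $v$-block's measure-one set of $p\in G^*$ to $(A^p_i\cap A)\cup\{w\in A^p_i\mid\ell(w)\ge 2\}$, which is still in $\mathscr{F}(v)$, forcing the tail of the order-$0$ points to fall in $A$. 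Your Steps 3--4 are in the right spirit (reflecting the projection $\mathbb{T}_W\to\Add(\kappa,1)$ down to a $\mathcal{F}(u)$-large set of $v$'s and then gluing), and indeed closely mirror the paper's definition of the set $X:=\{v\mid\mathbb{T}_{v(1)}\text{ projects onto }\Add(\kappa_v,1)\}\in\mathcal{F}(u)$; but to complete the argument you must replace Steps 1--2 with the Mathias-criterion observation above, since otherwise the Tree-Prikry generic you feed into the gluing lemma is not produced.
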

\begin{proof}
    Let us begin noting that\footnote{In a slight abuse of notation, here we have identified $v(1)$ with the corresponding measure on $\kappa_v$ rather than on $V_{\kappa_v}$.} $$X:=\{v\in\mathcal{MS}\mid \exists\pi\colon\mathbb{T}_{v(1)}\rightarrow \Add(\kappa_v,1)\,\text{projection in }V^*\}\in\mathcal{F}(u).$$
    Indeed, this is because $\mathbb{P}(W)$ projects onto $\Add(\kappa,1)$ and this is correctly computed by the model $M^*$. By Proposition~\ref{Prop: special for limits} there is $\beta<\kappa$ such that 
    $$(\mathcal{MS}_{G^*}\cap \{v\in\mathcal{MS}\mid \ell(v)>1\})\setminus V_\beta\s X.$$
    Let $\alpha\in \lim(C_{G^*})\cup\{\kappa\}$ be with $\alpha>\beta$ and $\cf(\alpha)^{V[G^*]}=\omega$. By definition there is $v\in \mathcal{MS}_G$ such that $\alpha=\kappa_v$ and, clearly, $\alpha$ must have limit index in the enumeration of $C_{G^*}$. By Proposition~\ref{prop: limit index}, $v$ is a measure sequence with 
    $\ell(v)>1$ so the above inclusion gives $v\in X$. Thus, $\mathbb{T}_{v(1)}$ projects onto $\Add(\kappa_v,1)$. Next we show that a bounded piece of the Radin club $\langle \kappa_\alpha\mid \alpha<\omega^{\ell(u)}\rangle$ can be used to produce a generic for $\Add(\kappa_v,1)$.

    \smallskip

    Let $\langle v_n\mid n<\omega\rangle\s \mathcal{MS}_{G^*}$ of length $1$ such that $\sup_{n<\omega}\kappa_{v_n}=\alpha.$

    \begin{claim}
        $\l\alpha_n\mid n<\omega\r$ is a $\mathbb{T}_{v(1)}$-generic sequence.
    \end{claim}
    \begin{proof}[Proof of claim]
        Let us use the Mathias criterion for the Tree-Prikry forcing from \cite{TomTreePrikry}. 
        Let $A\in v(1)$ and  $p\in G^*$ be condition mentioning $v$; say at coordinate $i$. 
        Shrink $A^p_i$ to $A^*_i\subseteq A$, and extend $p\leq^*p^*$ so that the $i$-th set in $p^*$ is $A^*_i$. Note that $p^*$ forces that every successor element element of $C_{G^*}$ in the interval $(\sigma_{\bar{u}^p_{i-1}},\alpha)$ is in $A$. By density, we can find such a condition in $G^*$ and so a tail of the $\alpha_n$'s falls in $A$.
    \end{proof}
    Since $\mathbb{T}_{v(1)}$ projects on $\Add(\alpha,1)$,  there is a generic Cohen  function $f\in V^*[\l\alpha_n\mid n<\omega\r]$ and since $V^*[\l\alpha_n\mid n<\omega\r]\subseteq V^*[G^*]$ we are done.
\end{proof}

\begin{corollary}\label{ManyCohenswithRadin}
    If $\ell(u)=\omega_1$ then below a certain condition $p\in \mathbb{R}_{u}$ the poset $\mathbb{R}_{u}/p$ adds a  $V^*$-generic Cohen function to every limit point of the generic club $C_G$.
\end{corollary}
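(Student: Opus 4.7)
The plan is to reduce the statement to Theorem~\ref{ManyCohens} by (a) arranging that \emph{every} limit point of $C_G$ has cofinality $\omega$ in $V^*[G]$, and (b) exhibiting a concrete condition $p$ below which the ``bounded exception'' in that theorem is empty.

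For (a), the hypothesis $\ell(u)=\omega_1$ implies that the generic club $C_G$ has order type $\omega_1$ in $V^*[G]$. This is the standard correspondence between the length of the measure sequence and the order type of the Radin club, and it transfers to our non-normal setting because the underlying normal Radin club (induced by $\pi$) already has order type $\omega_1$, and $C_G$ sits inside it via the map $v\mapsto \kappa_v$. Consequently every $\alpha\in\lim(C_G)$ is enumerated at a countable limit ordinal, so the initial segment $C_G\cap\alpha$ contains a cofinal $\omega$-sequence in $V^*[G]$, giving $\mathrm{cf}^{V^*[G]}(\alpha)=\omega$.

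For (b), recall from the proof of Theorem~\ref{ManyCohens} that the set
\[
X:=\{v\in\mathcal{MS}\mid \mathbb{T}_{v(1)}\text{ projects onto } \Add(\kappa_v,1)\}
\]
belongs to $\mathcal{F}(u)$. Take $p:=\langle\langle u,X\rangle\rangle\in\mathbb{R}_u$; this is a legitimate condition of length $0$, and below it every $v\in\mathcal{MS}_G$ with $\kappa_v<\kappa$ automatically lies in $X$ (there is no element of $p$ ``blocking'' measure sequences below $u$). Fix now any $\alpha\in\lim(C_G)$. Writing $\alpha=\kappa_v$ for the corresponding $v\in\mathcal{MS}_G$, Proposition~\ref{prop: limit index} guarantees $\ell(v)>1$ (limit index in $C_G$), so $v\in X$ and $\mathbb{T}_{v(1)}$ projects onto $\Add(\alpha,1)$. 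Repeating verbatim the final claim in the proof of Theorem~\ref{ManyCohens}, the $\omega$-sequence from (a) is $\mathbb{T}_{v(1)}$-generic over $V^*$ by the Mathias criterion of \cite{TomTreePrikry}, and projects to a $V^*$-generic Cohen function $f_\alpha$ for $\Add(\alpha,1)$ inside $V^*[G]$.

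The main obstacle in this plan is the verification of (a), i.e.\ $\mathrm{otp}(C_G)=\omega_1$ when $\ell(u)=\omega_1$. In the normal Radin setting this is folklore, but here one must check that the $\prec$-ordering between non-normal measure sequences and the factoring property of Lemma~\ref{Factoring} still force $C_G$ to be closed and of the expected order type; once this is in hand, the remainder of the argument is a direct appeal to Theorem~\ref{ManyCohens} applied below $p$.
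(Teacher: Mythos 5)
Your proof is correct and follows the paper's (implicit) approach: the corollary is an immediate consequence of Theorem~\ref{ManyCohens} once one notes that $\ell(u)=\omega_1$ gives $\mathrm{otp}(C_G)=\omega^{\omega_1}=\omega_1$ (so every limit point below $\kappa$ has cofinality $\omega$ in $V^*[G]$) and that restricting below $p=\langle\langle u,X\rangle\rangle$ removes the bounded exception. The hedging in your last paragraph is unnecessary: the remark following the proposition on $\Sigma_G$ already records that $C_G$ is exactly the Radin club of the normal Radin forcing induced by $\pi$, so the order-type computation is the classical one, and indeed Main Theorem~\ref{thm: main 1} asserts $\mathrm{otp}(C)=\omega_1$ outright. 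One small point, inherited from the paper's own claim that $X\in\mathcal{F}(u)$: as written, $v(1)$ is undefined when $\ell(v)=1$, so $X$ as stated fails to be in $u(1)$; you should enlarge $X$ to include all $v$ with $\ell(v)\leq 1$ so that $p=\langle\langle u,X\rangle\rangle$ is a legitimate condition — this is harmless, since by Proposition~\ref{prop: limit index} only $v$ with $\ell(v)>1$ matter.
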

After an appropriate preparation, we have just shown that forcing a $\mathbb{R}_u$-generic club $C\s \kappa$ of order-type $\omega_1$ automatically adds a $V$-generic Cohen subset to every limit point of $C$. Clearly, all those points have countable cofinality in $V[C]$. However, do we add a Cohen subset to $\kappa$? Or, alternatively, if we employ $\mathbb{R}_u$ to add a generic club $C\s \kappa$ of order-type $\omega_2$, do the limit points of $C$ of cofinality $\omega_1$ carry a $V$-generic Cohen subset in $V[C]$? Suppose that $\vec{\kappa}=\langle \kappa_\alpha\mid \alpha<\omega_1\rangle$ is a Magidor/Radin generic. If one attempts to amalgamate a Cohen generic $f$ using $\vec{\kappa}$ --similarly to what we did in Claim~\ref{GluingTheCohens-PrikryCase}-- this will not work: On one hand,  the restriction $f\restriction\kappa_\omega$ is generic over the ground model by virtue of Claim~\ref{GluingTheCohens-PrikryCase}. On the other hand, if $f$ is a $V$-generic for $\mathrm{Add}(\kappa,1)$ then $f\restriction\alpha\in V$ for all $\alpha<\kappa$.  This restriction is in fact hiding the impossibility for a Radin-like forcing to introduce a fresh subset of $\kappa$, provided this latter cardinal changes its cofinality to ${\geq}\omega_1$. 

\begin{definition}
Let $\mathbb{P}$ be a forcing poset and $G\s \mathbb{P}$ a $V$-generic filter. 
   A set $x\s \kappa$ is called \emph{$(V,V[G])$-fresh}  if $x\in V[G]$ and $x\cap \alpha\in V$ for all $\alpha<\kappa$. 
\end{definition}

The next fact appears in \cite[Proposition~1.3]{BenNeria} where the author gives credit to Cummings and Woodin. The proof in the non-normal scenario is verbatim the same as the one provided by Ben-Neria -- one simply replaces the usual diagonal intersection by our revised definition employing the order $\prec$ of Definition~\ref{OrderBetweenMeasures}:
\begin{fact}[Cummings and Woodin]
  Assume  $u\in\mathcal{MS}$ has $\cf(\ell(u))\geq \omega_1$. Then, the trivial condition of $\mathbb{R}_{u}$ forces that $\text{$``\forall \tau\s \kappa\, (\text{$\tau$ is fresh\, $\Rightarrow$\,$\tau\in \check{V})$}$''.}$
\end{fact}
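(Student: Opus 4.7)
The plan is to adapt the argument of Ben-Neria \cite{BenNeria} (originally due to Cummings and Woodin), replacing the usual diagonal intersection by one tailored to the order $\prec$ of Definition~\ref{OrderBetweenMeasures}. Assume for contradiction that some $p\in\mathbb{R}_u$ forces a name $\name{\tau}$ to be a fresh subset of $\check\kappa$ with $\name{\tau}\notin\check V$.

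The first step is to decide initial segments via direct extensions. For each $\alpha<\kappa$, freshness makes $D_\alpha:=\{q\leq p\colon q\text{ decides }\name{\tau}\cap\alpha\}$ dense open below $p$. The Strong Prikry Property produces a direct extension $p\leq^* q_\alpha^0$ and a fat tree system $\mathcal{T}^\alpha$ such that $q_\alpha^0\cat\vec v\in D_\alpha$ for every choice of branches $\vec v$. Since $\mathrm{GCH}$ yields $|\mathcal{P}(\alpha)\cap V|=\alpha^+<\kappa$ and every successor set appearing in $\mathcal{T}^\alpha$ lies in a $\kappa$-complete measure, a finite recursion through the levels of $\mathcal{T}^\alpha$ shrinks the trees so that a single $x_\alpha\in V$ is forced for $\name{\tau}\cap\alpha$ along every branch. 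This gives a direct extension $q_\alpha\leq^* p$ with $q_\alpha\Vdash\name{\tau}\cap\alpha=\check x_\alpha$. The assignment $\alpha\mapsto(q_\alpha,x_\alpha)$ lies in $V$ and the $x_\alpha$'s cohere, so $\tau^*:=\bigcup_{\alpha<\kappa}x_\alpha\in V$ is the candidate ground-model value of $\name\tau$.

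The second step is to amalgamate the $q_\alpha$'s into a single $p^*\leq^* p$. Since every $q_\alpha$ shares the stem of $p$, only the measure-one sets need be combined. Following Ben-Neria's adaptation I would define, at each coordinate $i\leq \ell(p)$, a $\prec$-diagonal intersection; at the top coordinate this reads
\[
A^*:=\bigl\{v\in A^p_{\ell(p)}\colon\forall w\prec v,\ v\in A^{q_{\kappa_w}}_{\ell(p)}\bigr\},
\]
and the analogue is taken at lower coordinates using the functions representing the $\kappa_{u^p_i}$'s. The coherence of $u$ together with the identity $j^*(\pi)(\sigma)=\kappa$ ensures that each such diagonal intersection belongs to the relevant filter $\mathcal{F}(u^p_i)$. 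Letting $p^*$ be the direct extension of $p$ with these as measure-one sets, the key point is that any extension of $p^*$ mentioning a measure sequence $v$ with $\pi(\sigma_v)>\alpha$ extends $q_\alpha$ and therefore forces $\name{\tau}\cap\alpha=\check x_\alpha$. Since $\mathrm{cf}(\ell(u))\geq\omega_1$ keeps the Radin club cofinal in $\kappa$ in the extension, density of such $v$'s below $p^*$ yields $p^*\Vdash\name{\tau}=\check\tau^*$, contradicting the assumption on $p$.

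The main obstacle is the verification in the second step that the $\prec$-diagonal intersection interacts correctly with the truncation operations $\downarrow v$: namely, that $A^*\downarrow v$ refines $A^{q_\alpha}_{\ell(p)}\downarrow v$ once $\pi(\sigma_v)>\alpha$, so that $p^*\cat v$ genuinely extends $q_\alpha\cat v$ in the direct extension order. This is precisely the point where replacing ordinal comparison by $\prec$ does the work, absorbing the full lower part $V_{\kappa_v}$ uniformly via the coherence of $u$ and the identity $j^*(\pi)(\sigma)=\kappa$. The cofinality hypothesis $\mathrm{cf}(\ell(u))\geq\omega_1$ enters to rule out the pathology already visible at $\ell(u)=2$ (the Prikry case), where the generic sequence itself is a fresh cofinal $\omega$-sequence in $\kappa$ not in $V$.
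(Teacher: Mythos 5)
Your approach has a genuine gap, and it is worth spelling out why, because the argument as written would ``prove too much'': it would show that no Prikry-type forcing from a measure sequence adds a fresh subset of $\kappa$, which fails already for Prikry forcing itself ($\ell(u)=2$), where the Prikry $\omega$-sequence is a fresh subset of $\kappa$ not in $V$.

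The problem is in the amalgamation step and the density claim supporting it. Granting for the moment that for each $\alpha<\kappa$ you can produce a \emph{direct} extension $q_\alpha\leq^* p$ with $q_\alpha\Vdash \name{\tau}\cap\alpha=\check x_\alpha$ (for Prikry forcing and $\tau=C_G$ one can indeed take $q_\alpha$ with measure-one set $A\setminus V_{\alpha+1}$, which forces $C_G\cap\alpha=\emptyset$), the $\prec$-diagonal intersection $A^*$ you form \emph{does not} give you a condition $p^*$ forcing $\name\tau=\check\tau^*$. The asserted density fact, ``any extension of $p^*$ mentioning some $v$ with $\pi(\sigma_v)>\alpha$ extends $q_\alpha$'', is false: such an extension $r$ can also carry stem elements $w$ with $\kappa_w<\alpha$, and such a $w$ need not lie in $A^{q_\alpha}$ (in Prikry, $w\notin A\setminus V_{\alpha+1}$). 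So $r$ does not refine $q_\alpha$, and it can happily force $\name\tau\cap\alpha\neq x_\alpha$. What you actually get from diagonalizing is that $p^*$ forces $\name\tau\cap\alpha=x_\alpha$ \emph{only until the first stem element is added}; once the generic commits to a lower part, the decided value of $\name\tau$ on the bounded initial segment can change. Precisely because of this, the hypothesis $\cf(\ell(u))\geq\omega_1$ plays no load-bearing role in your argument, which is another sign the proof cannot be right as written.

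The paper's route (following Ben-Neria, and credited to Cummings--Woodin) is organized quite differently and avoids this trap. One does not try to decide $\name\tau\cap\alpha$ for an externally fixed $\alpha$; instead one works with ``good'' conditions $p$ that decide $\name\tau$ only up to $\beta(p)$, the first point of the generic club past $\kappa(p)=\max\mathrm{supp}(p)$. This bound floats with the condition and is compatible with adding lower parts. One proves density (in $\leq^*$) of good conditions by an elementary-embedding argument, then builds a $\leq$-decreasing $\omega$-sequence of good conditions $(p_n,S_n)$ in the generic filter with diverging decided sets $S_n$, and only at this point does $\cf(\ell(u))\geq\omega_1$ enter: in $V[G]$ the cardinal $\kappa$ has uncountable cofinality and $C_G$ is closed, so $\delta:=\sup_n\kappa(p_n)$ is a limit point that actually lies in $C_G$. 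A good condition $q\in G$ with $\delta\in\mathrm{supp}(q)$ then forces $\name\tau\cap\delta=S_\infty\cap\delta$ for a single $S_\infty\in V$, contradicting the fact that the $S_n$'s were pairwise eventually different below $\delta$. The essential point is that the cofinality hypothesis is used to get an honest limit point of the generic club, not to keep the club cofinal (it is always cofinal). Your proposal needs to be restructured along these lines rather than patched.
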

The same  holds true for the non-normal Magidor forcing defined in  \S\ref{Sec: non-normalMagidor}.
\begin{corollary}\label{CorollaryFresh}
    If $x$ is $(V,V[G])$-fresh then $\cf^{V[G]}(\sup(x))=\omega$.
\end{corollary}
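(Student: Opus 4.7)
The proof is by contradiction. Assume $x \in V[G] \setminus V$ is $(V,V[G])$-fresh with $\cf^{V[G]}(\sup(x)) \geq \omega_1$, and write $\lambda := \sup(x)$.

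The first step I would establish is that $\lambda \in \mathrm{acc}(C_G) \cup \{\kappa\}$. This uses the cofinality-preservation analysis for $\mathbb{R}_u$: analogously to the lemma stated for $\mathbb{M}[\vec{U}]$ (derivable from the fact that $\mathbb{R}_u$ projects onto the usual Radin forcing via $\pi$), a $V$-regular cardinal $\alpha$ forced to lie outside $\mathrm{acc}(C_G)$ remains regular in $V[G]$. Thus if $\lambda$ were not in $\mathrm{acc}(C_G) \cup \{\kappa\}$, then either $\cf^{V[G]}(\lambda) = \cf^V(\lambda)$ and $\lambda$ would remain $V$-regular of uncountable cofinality in $V[G]$ — but then the existence of a fresh cofinal subset of $\lambda$ is ruled out by the $\leq^*$-closure plus Prikry property of $\mathbb{R}_u$ below an appropriate condition — or $\cf^{V[G]}(\lambda) = \omega$ already, against our assumption. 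Consequently $\lambda = \kappa_v$ for the unique $v \in \mathcal{MS}_G \cup \{u\}$ with $\kappa_v = \lambda$; by Proposition~\ref{prop: limit index} we have $\ell(v) \geq 2$, and the standard computation $\cf^{V[G]}(\kappa_v) = \cf(\ell(v))$ gives $\cf(\ell(v)) \geq \omega_1$.

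Next I would localize $x$ via the Factoring Lemma~\ref{Factoring}. Fix $p \in G$ whose sequence mentions $v$, say as $u^p_i$. Then
\[
\mathbb{R}_u/p \;\simeq\; \mathbb{R}_v / p\!\restriction\!(i+1) \;\times\; \mathbb{R}_u / p\!\restriction\! [i+1, \ell(p)).
\]
The upper factor is $\pi(\sigma_{u^p_{i+1}})$-directed-closed in the $\leq^*$-ordering by Lemma~\ref{Factoring}(3) and satisfies the Prikry property, so it is $\pi(\sigma_{u^p_{i+1}})$-distributive. Since $\pi(\sigma_{u^p_{i+1}}) = \kappa_{u^p_{i+1}} > \lambda$, this tail introduces no new subsets of $\lambda$. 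Hence $x$ belongs to the intermediate extension $V[G_{\leq i}]$ produced by the lower factor $\mathbb{R}_v / p\!\restriction\!(i+1)$, where it remains fresh as a subset of the top cardinal $\kappa_v = \lambda$ of $v$.

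Finally I would invoke the Cummings--Woodin Fact applied to $\mathbb{R}_v$: since $\cf(\ell(v)) \geq \omega_1$, every fresh subset of $\kappa_v = \lambda$ in an $\mathbb{R}_v$-extension lies in $V$. Therefore $x \in V$, contradicting $x \notin V$. The main obstacle is the first step: rigorously showing $\lambda \in \mathrm{acc}(C_G) \cup \{\kappa\}$, which requires formalising the cofinality-preservation machinery for the non-normal Radin forcing in strict parallel to (but not literally borrowed from) the analysis of $\mathbb{M}[\vec{U}]$ carried out in \S\ref{Sec: non-normalMagidor}.
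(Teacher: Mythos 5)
Your proof takes a genuinely different route from the paper's, but as written it has two real gaps.

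The paper does \emph{not} argue by localizing $\sup(x)$ as a point on $C_G$. Instead it proceeds by induction on $\sup(x)$: when $\cf^V(\sup(x))\geq\kappa^+$ it uses the $\kappa$-centeredness ($\kappa^+$-c.c.) of $\mathbb{R}_u$ to stabilize the stem of the deciding conditions and show $x\in V$ outright; when $\cf^V(\sup(x))\leq\kappa$ it \emph{codes} $x$ as a fresh subset $x_1\subseteq\kappa$ of the same order type (via a sequence of bijections $\phi_\alpha$ and the $\kappa^+$-c.c.), then applies the Cummings--Woodin Fact to $x_1$ if $x_1$ is unbounded in $\kappa$, and recurses on $\sup(x_1)$ otherwise. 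The only use of Cummings--Woodin is at the top cardinal $\kappa$; there is no factoring through $\mathbb{R}_v$.

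Your plan -- locate $\sup(x)$ on the generic club, factor, and apply the Fact to the lower piece $\mathbb{R}_v$ -- is an attractive idea, and the piece from Step~2 onward (Factoring Lemma, $\leq^*$-closure plus Prikry property to push $x$ into the lower extension, then the Fact) is essentially correct for the case $\sup(x)=\kappa_v\in\mathrm{acc}(C_G)$. But Step~1 is where the proof breaks.

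First, even though the Definition reads ``$x\subseteq\kappa$'', the paper's proof manifestly allows $\cf^V(\sup(x))\geq\kappa^+$, i.e.\ $\sup(x)>\kappa$; in that case $\sup(x)\notin\mathrm{acc}(C_G)\cup\{\kappa\}$, and your Step~1 simply cannot hold. That case is handled in the paper by the $\kappa^+$-c.c., not by closure. Second, when $\sup(x)<\kappa$ lies strictly between two consecutive points of $C_G$, your dichotomy assumes $\lambda=\sup(x)$ is $V$-regular (``$\lambda$ would remain $V$-regular of uncountable cofinality''). But $\sup(x)$ is just a supremum; it can be $V$-singular of some small cofinality $\mu<\lambda$. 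In that situation the relevant factoring places $x$ into a small extension by $\mathbb{R}_v$ where $\kappa_v<\lambda$, and the conclusion $x\in V$ is not a consequence of $\leq^*$-closure of the tail -- one genuinely needs a chain-condition argument on the lower factor (to show fresh sets of large $V$-cofinality are decided) combined with a recursion when $\cf^V(\lambda)$ is small. This is exactly the recursion the paper implements (with $x_1$ bounded), and it is absent from your outline. So the claim ``the existence of a fresh cofinal subset of $\lambda$ is ruled out by the $\leq^*$-closure plus Prikry property'' is the missing idea: closure alone does not do it; you need the c.c.\ argument as in the paper's $\lambda\geq\kappa^+$ case, plus an induction to handle $V$-singular $\lambda$ not on $\mathrm{acc}(C_G)$.

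In short: your localization strategy is sound for $\sup(x)\in\mathrm{acc}(C_G)\cup\{\kappa\}$ and gives a cleaner picture there, but it does not cover the cases $\sup(x)>\kappa$ and $\sup(x)<\kappa$ with $\sup(x)\notin\mathrm{acc}(C_G)$, both of which the paper treats via $\kappa^+$-c.c.\ plus coding and an induction on $\sup(x)$. You would need to import those ingredients to make the proof complete.
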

\begin{proof}
The proof is by induction on $\sup(x)$. Denote by $\lambda=\cf^V(\sup(x))$ and let $\langle \delta_\alpha\mid \alpha<\lambda\rangle\in V$ be a cofinal sequence in $\sup(x)$. 

\smallskip
\underline{Case $\lambda\geq \kappa^+$}: For each $\alpha<\lambda$, since $x$ is fresh, we can let $p_\alpha=\vec{d}_\alpha{}^\smallfrown(u,A_\alpha)\in G$ deciding the value of $\dot{x}\cap \delta_\alpha$. By passing to an unbounded subset of $\lambda$ we can assume that $\vec{d}_\alpha=\vec{d}_*$. Next define  $$y=\{\nu<\kappa\mid \exists A\in\mathcal{F}(u), \ \vec{d}_*{}^{\smallfrown}(u,A)\Vdash \nu\in \dot{x}\}.$$
Then $y\in V$ and we claim that $y=x$. Indeed, if $\nu\in y$ then, for some $\alpha<\lambda$, $\nu<\delta_\alpha$ and there is $A$ such that $p'=\vec{d}_*{}^{\smallfrown}(u,A)\Vdash \nu\in \dot{x}$. Since $\vec{d}_*{}^{\smallfrown}(u,A_\alpha)\Vdash \dot{x}\cap\delta_\alpha=x\cap\delta_\alpha$ is compatible with $p'$, it must be that $\nu \in x\cap\delta_\alpha$ (otherwise, a common extension would have forced contradictory information). Conversely, if  $\nu\in x$ one finds $\delta_\alpha$ such that $\nu<\delta_\alpha$. Since $\vec{d}_*{}^\smallfrown(u,A_\alpha)\Vdash \nu\in x\cap\delta_\alpha=\dot{x}\cap\delta_\alpha$, it follows that $A_\alpha$ witness that $\nu\in y$.

\smallskip

\underline{Case $\lambda\leq\kappa$} 
Let $x=x_0$. We fix in $V$ a sequence $\l \phi_\alpha\mid \alpha<\lambda\r\in V$ such that $\phi_\alpha:\mathcal{P}(\delta_\alpha)\rightarrow 2^{\delta_\alpha}$ is a bijection. Let $\lambda_\alpha=\phi_\alpha(x\cap\delta_\alpha)$. By $\kappa^+$-c.c of $\mathbb{R}_u$, we can find $f:\lambda^*\rightarrow \mathcal{P}_{\kappa^+}(\lambda^*)\in V$ (where $\lambda^*=\sup\{\lambda_\alpha\mid \alpha<\lambda\}$)  such that $\lambda_\alpha\in f(\alpha)$. For each $\alpha$ let $i_\alpha<\kappa$ be such that $\lambda_\alpha$ is the $i_\alpha$-th element of $f(\alpha)$ in its increasing enumeration. We can define $i^*_\alpha$ recursively as follows $i^*_0=i_0$ and $i^*_{\alpha}=(\sup_{j<\alpha}i^*_j)+i_\alpha$. Note that $i^*_\alpha$ is increasing and $i_\alpha$ is definable from the sequence $i^*_\alpha$ (as the unique ordinal $\gamma$ such that $(\sup_{j<\alpha}i^*_j)+\gamma=i^*_\alpha$). Also note that since $\kappa$ is regular in $V$, and for each $\beta<\lambda\leq\kappa$, $\{i_\alpha\mid \alpha<\beta\}\in V$, $i^*_\alpha<\kappa$.
We conclude that the set $x_1=\{i^*_\alpha\mid \alpha<\lambda\}\subseteq \kappa$ is fresh. If $x_1$ is unbounded in $\kappa$, then by the previous proposition, $\omega=\cf^{V[G]}(\kappa)=\cf^{V[G]}(\lambda)=\cf^{V[G]}(\sup(x))$.
Otherwise, $x_1$ is bounded in $\kappa$ and we let $\kappa^*_1=\sup(\lim(C_G)\cap \sup(x_1))<\kappa$.  Then $x_1\in C_G\restriction \kappa^*_1$, and we may apply the induction hypothesis. 
\end{proof}
For a forcing notion $\mathbb{Q}$ let us  denote by $\mathrm{dist}(\mathbb{Q})$ the unique $\lambda$ such that $\mathbb{Q}$ is $\lambda$-distributive yet not $\lambda^+$-distributive. Equivalently, 
 $$\mathrm{dist}(\mathbb{Q})=\min\{\theta\in \mathrm{Card}\mid \exists \tau\in V^{\mathbb{Q}}\, \one\forces_{\mathbb{Q}} \text{$``\tau\s \mathrm{Ord}\,\wedge\,  |\tau|=\theta\,\wedge\, \tau\notin\check{V}$''}\}.$$
 Note that $\mathrm{dist}(\mathbb{Q})$ is a regular cardinal in $V^{\mathbb{Q}}$.
\begin{corollary}\label{cor: no fresh subsets}
    $\mathbb{R}_u$  projects only on forcings $\mathbb{Q}$ such that $\cf^{V[G]}(\mathrm{dist}(\mathbb{Q}))=\omega$ and therefore $\mathrm{dist}(\mathbb{Q})\in \{\omega\}\cup(\lim(C_G)\cap \cf(\omega)).$
\end{corollary}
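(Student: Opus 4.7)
The plan is to inject a $\mathbb{Q}$-generic filter into $V[G]$, extract from it a $(V,V[G])$-fresh subset whose supremum has cofinality tied to $\lambda:=\mathrm{dist}(\mathbb{Q})$, and then invoke Corollary~\ref{CorollaryFresh}. Since $\mathbb{R}_u$ projects onto $\mathbb{Q}$, fix a $V$-generic $H\in V[G]$ for $\mathbb{Q}$. In $V[H]$, by failure of $\lambda^+$-distributivity, choose a new set of ordinals $\tau$ of cardinality $\lambda$ and, among such, of minimal order type $\beta$. A minimality argument forces $\beta=\lambda$: if $\beta>\lambda$, the increasing enumeration $c\colon\beta\to\tau$ yields $\tau_0:=c[\lambda]\in V$ (since $\mathrm{ot}(\tau_0)=\lambda<\beta$), and then $\tau\setminus \tau_0=c[\beta\setminus\lambda]$ would be a new set of cardinality $\lambda$ and order type strictly less than $\beta$, contradicting minimality.

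With $\mathrm{ot}(\tau)=\lambda$, I verify that $\tau$ is $(V,V[H])$-fresh. Let $c\colon \lambda\to \tau$ denote the increasing enumeration. For every $\delta<\lambda$, $c\restriction\delta$ is a ${<}\lambda$-sequence of ordinals in $V[H]$; by $\lambda$-distributivity of $\mathbb{Q}$, $c\restriction \delta\in V$, and hence $\tau\cap c(\delta)=c[\delta]\in V$. Given $\alpha<\sup(\tau)$, picking $\delta<\lambda$ with $c(\delta)>\alpha$ gives $\tau\cap \alpha=c[\delta]\cap\alpha\in V$. Hence $\tau$ is $(V,V[H])$-fresh and, since $V[H]\s V[G]$, also $(V,V[G])$-fresh. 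By Corollary~\ref{CorollaryFresh}, $\cf^{V[G]}(\sup(\tau))=\omega$; and since $c$ is an increasing cofinal map from $\lambda$ to $\sup(\tau)$ in $V[G]$, $\cf^{V[G]}(\sup(\tau))=\cf^{V[G]}(\lambda)$, so $\cf^{V[G]}(\lambda)=\omega$.

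For the classification of $\lambda$: being a cardinal in $V$, $\lambda$ remains a cardinal in $V[G]$ as $\mathbb{R}_u$ preserves cardinals. If $\lambda>\omega$, then $\lambda$ is regular in $V$ yet singular of cofinality $\omega$ in $V[G]$, so its cofinality has been altered by the forcing. A direct Radin-analogue of the last lemma of \S\ref{Sec: non-normalMagidor} --every regular cardinal of $V$ not accumulating to $C_G$ retains its cofinality, established via the factoring Lemma~\ref{Factoring} and the Strong Prikry Property-- forces $\lambda\in \mathrm{acc}(C_G)=\lim(C_G)$, giving $\lambda\in \lim(C_G)\cap\mathrm{cf}(\omega)$. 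The main subtlety is the extraction of the new $\tau$ of order type exactly $\lambda$; once this minimality argument is carried out, the remainder is a direct application of Corollary~\ref{CorollaryFresh} together with the standard preservation properties of $\mathbb{R}_u$.
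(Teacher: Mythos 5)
Your overall strategy coincides with the paper's: extract a $(V,V[G])$-fresh set whose supremum has $V^{\mathbb{Q}}$-cofinality $\lambda:=\mathrm{dist}(\mathbb{Q})$, and then invoke Corollary~\ref{CorollaryFresh}. However, your minimality argument that a new set $\tau$ of cardinality $\lambda$ can be chosen of order type exactly $\lambda$ has a gap. You assert that if $\beta=\mathrm{ot}(\tau)>\lambda$ then $\tau\setminus c[\lambda]$ has order type strictly less than $\beta$, but $\mathrm{ot}(\beta\setminus\lambda)=\beta$ whenever $\beta$ is additively indecomposable above $\lambda$ (e.g., $\beta=\lambda\cdot\omega<\lambda^+$, for which $\lambda+\beta=\beta$). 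So removing the first $\lambda$-block need not contradict minimality, and the normalization to order type $\lambda$ is not established by this step.

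The paper sidesteps this entirely by never normalizing the order type. Given any new set of ordinals $A\in V^{\mathbb{Q}}\setminus V$ of cardinality $\lambda$, one passes to the least $\rho\le\sup(A)$ with $A\cap\rho\notin V$. Minimality of $\rho$ makes $A\cap\rho$ fresh automatically, and the cofinality computation is direct: if $\cf^{V^{\mathbb{Q}}}(\rho)<\lambda$, writing $A\cap\rho$ as a ${<}\lambda$-union of the $V$-sets $A\cap\rho_\xi$ along a cofinal sequence in $\rho$ contradicts $\lambda$-distributivity; and since $A\cap\rho$ is a set of size $\le\lambda$ unbounded in $\rho$, $\cf^{V^{\mathbb{Q}}}(\rho)\le\lambda$. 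This gives $\cf^{V^{\mathbb{Q}}}(\sup(A\cap\rho))=\lambda$ with no need to control the order type, after which your application of Corollary~\ref{CorollaryFresh} and the final classification go through exactly as you describe.
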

\begin{proof}
Suppose $\mathrm{dist}(\mathbb{Q})=\lambda$.
\begin{claim}
    There is a fresh set of ordinals $A\in V^{\mathbb{Q}}\setminus V$ such that $$\cf^{V^{\mathbb{Q}}}(\sup(A))=\lambda.$$
\end{claim}
\begin{proof}[Proof of claim]
    Let $A\in V^{\mathbb{Q}}\setminus V$ be a set of ordinals with  $\lambda=|A|^{V^{\mathbb{Q}}}$. 
    Take $\rho\leq\sup(A)$ be the minimal ordinal such that $A\cap\rho\notin V$. If $\cf^{V^{\mathbb{Q}}}(\rho)<\lambda$  we would reach a contradiction with the fact of $\mathbb{Q}$ being $\lambda$ distributive. Hence it must be that $\cf^{V^{\mathbb{Q}}}(\rho)\geq \lambda$, in which case, $\cf^{V^{\mathbb{Q}}}(\rho)=\lambda$ since $A\cap\rho\in V^{\mathbb{Q}}$ is of size $\leq\lambda$ and must be unbounded in $\rho$ (by minimality of $\rho$). 
\end{proof}
  Let $A$ be a set as in the claim.  By Corollary~\ref{CorollaryFresh},  $\cf^{V[G]}(\sup(A))=\omega$, and as a result $\cf^{V[G]}(\lambda)=\omega$. Hence $\lambda$ is a regular cardinal which changed its cofinality in $V[G]$ to $\omega$, and thus $\lambda\in \{\omega\}\cup(\lim(C_G)\cap \cf(\omega)).$ 
\end{proof}

 Recall that by Theorem \ref{Thm:Projection of extender-based Radin} the non-normal Magidor forcing of \S\ref{Sec: non-normalMagidor} is a projection of the extender based Magidor-Radin forcing. Similarily, it is possible to show that the non-normal Radin forcing of this section is a projection of the extender-based Radin forcing from \cite{CarmiRadin}. 

Let us use the observation above regarding our forcing to conclude that also in the extender-based Radin and Magior/Radin there are no fresh subsets of $\kappa$. We will need to use the properness-like property of the extender-base Magidor radin forcing \cite[Lemma 4.13]{CarmiMagidorRadin}:
Assume $\chi$ is large enough, $N\prec H_{\chi}$ is an elementary submodel, and $P \in N$ is a forcing notion. A condition $p \in P$ is called $\l N,P\r$-generic if for each dense open subset $D \in N$ of $P$,
$$p\Vdash_P \check{D}\cap\lusim{G}\cap\check{N}\neq \emptyset$$

where $\lusim{G}$ is the name of the $P$-generic object.
\begin{corollary}\label{cor: no fresh subsets carmi}
    Suppose that $\bar{E}=\langle E_\xi\mid \xi<o(\bar{E})\rangle$ is an extender sequence with $\cf(o(\bar{E}))\geq\omega_1$ such that each $E_\xi$ is a $(\kappa,\lambda_\xi)$-extender and $\lambda_\xi<j_{E_0}(\kappa)$. Let 
  $\mathbb{P}_{\bar{E}}$ be either the extender-based Radin forcing or the extender-based Magidor/Radin forcing. Then, for every $V$-generic filter $G\subseteq \mathbb{P}_{\bar{E}}$  there are no $(V,V[G])$-fresh subsets of $\kappa$.
\end{corollary}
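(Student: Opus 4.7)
The plan is to adapt the proof of the Cummings--Woodin fact invoked before Corollary~\ref{CorollaryFresh} to the extender-based setting; the only additional technical input is Merimovich's properness-like property (Lemma~4.13 of~\cite{CarmiMagidorRadin}). Suppose toward contradiction that $\dot x$ is a $\mathbb{P}_{\bar{E}}$-name and $p_0\in\mathbb{P}_{\bar{E}}$ forces that $\dot x$ is fresh and $\dot x\notin \check V$. For $p\in\mathbb{P}_{\bar{E}}/p_0$ write $\kappa(p)$ for the largest critical point appearing in the (finite) lower part of $p$ and $\beta(p)$ for the canonical name of the least element of the generic Radin/Magidor club past $\kappa(p)$. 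Call $p$ \emph{good} if there exists $S(p)\in V$ with $p\Vdash \dot x\cap\beta(p)=S(p)\cap\beta(p)$.

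\smallskip

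The central step is the density of good conditions below $p_0$. For $\mathbb{R}_u$ (and $\mathbb{M}[\vec U]$) this is proved by a Prikry-style absorption using only the closure of $\leq^*$ and the Prikry property, but for $\mathbb{P}_{\bar{E}}$ the Cohen-like data attached to the $f^i$-coordinates obstructs that argument and is precisely what the properness-like property will absorb. I would fix an elementary submodel $N\prec H_\chi$ of cardinality less than $\kappa$, containing $\dot x, p_0, \bar E$ and $\mathbb{P}_{\bar{E}}$, with $\beta:=N\cap\kappa\in\kappa$, and invoke Lemma~4.13 of~\cite{CarmiMagidorRadin} to obtain an $(N,\mathbb{P}_{\bar{E}})$-generic condition $p^*\leq^* p_0$. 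For each $\alpha<\beta$ the set $D_\alpha=\{q\in\mathbb{P}_{\bar{E}}\mid q\text{ decides }\dot x\cap\check\alpha\}$ is dense open and lies in $N$, so $(N,\mathbb{P}_{\bar{E}})$-genericity of $p^*$ forces $G\cap N$ to meet every $D_\alpha$; freshness of $\dot x$ places all such decisions in $V$. Assembling them and $\leq^*$-shrinking $p^*$ pins down a single $S^*\in V$ with $p^*\Vdash \dot x\cap\check\beta=S^*\cap\check\beta$; a suitable one-point extension of $p^*$ bringing the next Radin club point below $\beta$ then delivers a good extension of $p_0$.

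\smallskip

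With good conditions dense, the Cummings--Woodin endgame proceeds verbatim. Fix $G\ni p_0$ and recursively build a $\leq$-increasing chain $\langle p_n\mid n<\omega\rangle\subseteq G$ of good conditions with $V$-sets $S_n$, arranging $\kappa(p_n)<\kappa(p_{n+1})$ and $S_n\cap\kappa(p_{n+1})\neq \dot x_G\cap\kappa(p_{n+1})$; this is possible because $\dot x_G\notin V$ forces $S_n$ to disagree with $\dot x_G$ somewhere below $\kappa$. Let $\delta=\sup_n\kappa(p_n)$. Since $\cf(o(\bar E))\geq\omega_1$, $\kappa$ has uncountable cofinality in $V[G]$ and the generic club is closed in $\kappa$, whence $\delta<\kappa$ lies in that club. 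Pick $q\in G$ with $\delta$ in its support, refined to be good with $V$-set $S_\infty$ and $q\Vdash\dot x\cap\delta=S_\infty\cap\delta$. A minimal common extension of $q$ and some $p_n$, extended by a one-point step at a measure-one $v$ with $\kappa(p_n)<\kappa_v\leq\delta$ and $S_n\cap\kappa_v\neq S_\infty\cap\kappa_v$ (possible by the disagreement condition), forces $\dot x\cap\beta(p_n)$ to equal both $S_n\cap\beta(p_n)$ and $S_\infty\cap\beta(p_n)$, the desired contradiction. The main obstacle is thus the density step: the Cummings--Woodin skeleton is robust, but absorbing the extender-theoretic $f^i$-data inside the density claim is precisely what requires Merimovich's properness-like lemma and is not available from the Prikry property alone.
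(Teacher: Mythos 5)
Your proposal takes a genuinely different route from the paper. The paper does not attempt to run a Cummings--Woodin argument inside $\mathbb{P}_{\bar E}$ at all: instead it fixes an elementary submodel $N\prec H_\chi$ of size exactly $\kappa$ (closed under ${<}\kappa$-sequences, with $N\cap\kappa^+\in\kappa^+$), uses Merimovich's properness lemma to obtain an $(N,\mathbb{P}_{\bar E})$-generic condition, derives a coordinate set $Y = N\cap\mathfrak{D}$, and shows that the restriction $G\restriction Y$ of the generic lives in the intermediate model $V[G^*]$ for a projected non-normal Magidor/Radin generic $G^*$ (via Theorem~\ref{Thm:Projection of extender-based Radin}). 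Since every $A\subseteq\kappa$ in $V[G]$ is then shown to lie in $V[G^*]$, one simply applies Corollary~\ref{cor: no fresh subsets} (the Cummings--Woodin fact for $\mathbb{M}[\vec U]$/$\mathbb{R}_u$) to that intermediate model; freshness of $A$ over $V$ is ruled out inside $V[G^*]$, not inside $V[G]$ directly. The properness lemma is thus used to absorb all $\kappa$-many names $\lusim a_i$ into one projection coordinate, not to obtain good conditions.

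Your version instead tries to reprove density of good conditions directly in $\mathbb{P}_{\bar E}$. This has a gap at the central step. First, Merimovich's Lemma~4.13 as quoted and as used in the paper is applied to $N$ of cardinality $\kappa$ with $N\cap\kappa^+\in\kappa^+$; you apply it to $N$ of size $<\kappa$ with $\beta:=N\cap\kappa\in\kappa$, and it is not clear that the lemma gives generic conditions in that regime (for a $\kappa^+$-cc poset that adds many subsets of $\kappa$, genericity for such small models is a much stronger demand). Second, and more decisively: even granting an $(N,\mathbb{P}_{\bar E})$-generic $p^*\leq^* p_0$, what this gives you is that $\dot x_G\cap\beta$ is computable from $G\cap N$ for any $V$-generic $G\ni p^*$; it does not give a single $S^*\in V$ with $p^*\Vdash \dot x\cap\check\beta = S^*\cap\check\beta$, because different generics $G,G'$ through $p^*$ yield different restrictions $G\cap N$, $G'\cap N$ and hence potentially different decisions about $\dot x\cap\beta$. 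The step ``Assembling them and $\leq^*$-shrinking $p^*$ pins down a single $S^*$'' is exactly what the classical density-of-good-conditions argument for $\mathbb{R}_u$ accomplishes by a diagonalization over measure-one sets and lower parts using the constructing embedding, and that diagonalization is precisely the piece that does not transfer verbatim to the extender-based poset because of the $f^i$-coordinates. Properness does not substitute for it. Once the density step is in place the endgame you sketch is standard, but as written the density step is not justified, and the paper avoids the problem by reducing to $\mathbb{M}[\vec U]$ rather than confronting it head-on.
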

\begin{proof}
    Let us prove that if $A\subseteq\kappa$, $A\in V[G]$, then there is a sequence of $\alpha_i$'s such that $\alpha_i<j_{E_0}(\kappa)$, and $\l E_i(\alpha_i)\mid i<o(\bar{E})\r$ is $\triangleleft$-increasing, such that $A\in V[G^*]$, where $G^*$ is the projected generic for $\mathbb{M}[\vec{U}]$, $\vec{U}$ being the generalized cohere sequence derived from $\l E_i(\alpha_i)\mid i<o(\bar{E})\r$. Since $V[G^*]$ does not have fresh subsets of $\kappa$, $A$ cannot be $(V,V[G])$-fresh. Let $\l\lusim{a}_i\mid i<\kappa\r$ be a sequence of $\mathbb{P}_{\bar{E}}$-names for an enumeration of $A$ and let $N\prec H_{\chi}$ $|N|=\kappa$, $\l\lusim{a}_i\mid i<\kappa\r,\mathbb{P}_{\bar{E}}\in N$, $N$ is closed under $<\kappa$-sequences and $N\cap \kappa^+\in\kappa^+$. Then there is $p^*$ which is $(N,\mathbb{P}_{\bar{E}})$-generic \cite[Lemma  4.13]{CarmiMagidorRadin}. In particular, consider the dense open set
    $$D_i=\{p\in\mathbb{P}_{\bar{E}}\mid p\text{ decides } \lusim{a}_i\}$$ 
    then $D_i\in N$ since $\lusim{a}_i,\mathbb{P}_{\bar{E}}\in N$ and by elementarity. Let $Y=N\cap \mathfrak{D}$, then $Y\in P_{\kappa^+}(\mathfrak{D})$. Let us find in $G$ an $(N,\mathbb{P}_{\vec{E}})$-generic condition $ p^*\in G$.  \begin{claim}
        For each $i<o(\bar{E})$, it is possible to find a single $\alpha_i<j_{E_0}(\kappa)$ and a function $f_i$, such that $j_{E_i}(f_i)(\alpha_i)=mc_i(Y)$ and $\l E_i(\alpha_i)\mid i<o(\bar{E})\r$ is $\triangleleft$-increasing.
    \end{claim}
    \begin{proof}
        Fix $i<o(\bar{E})$. Find a bijection $\phi:\kappa\rightarrow [\kappa]^{<\omega}$ such that for every limit ordinal $\alpha$ of cofinality $|\alpha|$ $\phi\restriction \alpha:\alpha\rightarrow [\alpha]^{<\alpha}$.  We construct $\alpha_i$'s by induction. In $M_{E_i}$, represent $j_{E_i}(f_i)(\xi_1,...\xi_n)=Y$, where $\xi_1,...,\xi_n<\lambda_i<j_{E_0}(\kappa)$  $j_{E_i}(g_i)(\eta_1,...\eta_m)=\{\alpha_j\mid j<i\}$ and $j_{E_i}(h_i)(\zeta_1,...,\zeta_k)=\l E_j\mid j<i\r$. Let $\alpha_i=j_{E_i}(\phi)(\{\xi_1,...,\xi_m,\eta_1,...,\eta_m,\zeta_1,...,\zeta_k\})$. Using the fact that $o(\bar{E})$ is small, we see that $E_j(\alpha_j)\in M_{E_i(\alpha_i)}$ for all $j<i$ and that there is a function $f_i'$ such that $j_{E_i}(f_i')(\alpha_i)=mc_i(Y)$. 
    \end{proof} We have $R_i\in E_i(Y\cup\{\bar{\alpha}_i\})$, such that for each $\mu\in R_i$, $\bar{\alpha}_i\in \dom(\mu)$, $o(\mu)=i$, and $\mu\restriction (Y\cap \dom(\mu))=f_i(\mu(\bar{\alpha}_i)_0)$. So we can find $p^*\leq^* p_*\in G$. To simplify the notation let us assume that $p_*=\l \bigcup_{i<o(\bar{E})}R_i,f_0,\bar{E}\r$. 
    Let us define $q\restriction Y$ and $Y^q_i= \dom(f^{q\restriction Y}_i)\subseteq \dom(f^q_i)$ for each $1\leq i\leq l(q)+1$ for every $q\leq p_*$. The indented meaning of $Y_j^{q}$ is the collection of extender sequences indexing the \emph{$j$th-block} of a condition $q$ which extends a pure condition whose top block is indexed by $Y$. Recall that when we extend a condition we have to reflect/squeeze the extender sequences indexing each block -- this is exactly the meaning of $Y^q_j$.

    $p_*\restriction Y=p_*$ and $Y^{p^*}_1=Y$. Suppose that $q\restriction Y$ and $Y^q_i$'s were defined, let $\mu\in A^q_i$, define $(q^\smallfrown \mu)\restriction Y=q\restriction Y^\smallfrown (\mu\restriction Y^q_i)$ and $$Y^{q^\smallfrown \mu}_{j}=\begin{cases}
        Y^{q}_{j} & j<i.\\
        \mu[Y^{q}_{i}] & j=i.\\
        Y^{q}_{j-1} & j>i.
    \end{cases}.$$ 

    Since $Y^q_i\subseteq \dom(f^{q\restriction Y}_i)$, $\mu\restriction Y^q_i\in A^{q\restriction Y}_i$. If $q'\leq^* q$ we define $A^{q'\restriction Y}_i=\{\mu\restriction Y^q_i\mid \mu\in A^{q'}_i\}$ and $Y^{q}_j=Y^{q'}_j$. Note that for every $q$, $q\restriction Y$ is a condition and that the map $q\mapsto q\restriction Y$ respects both $\leq$ and $\leq^*$.
    \begin{claim}
        If $r\in N$ and $r\leq q$ then $r\leq q\restriction Y.$
    \end{claim}
    \begin{proof}
        By induction on $l(q)$. For $l(q)=1$, and $r\in N$ such that $r\leq q$ we have $r=\l\l f^r,A^r,\bar{E}\r\r$. Since $r\in N$, $f^r\in N$ and $N\cap\kappa^+\in \kappa^+$,  $\dom(f^r)\subseteq N\cap \mathfrak{D}=Y$. This suffices to infer that $r\leq q\restriction Y.$ 
        
        Let us provide details for the case $l(q)=2$ (the others are analogue by induction). In that case $$q={q_0}^\smallfrown \mu=\l \l f^q_1,A^q_1,e^q_1\r,\l f^q_2,A^q_2,\bar{E}\r\r.$$ If $r\leq q$, $r\in N$, we may assume that $l(r)=2$ for otherwise we can apply the induction hypothesis. So $$r=\l \l f^r_1,A^r_1,e^r_1\r,\l f^r_2,A^r_2,\bar{E}\r\r={r_0}^\smallfrown \mu'$$ where $\mu'=\mu\restriction \dom(f^r_2)$. Thus  $$\dom(f^r_1)=\{\mu'(\bar{\alpha})\mid \bar{\alpha}\in \dom(\mu'),\,o(\bar{\alpha})>0\}\subseteq \mu'[Y]\subseteq \mu[Y]=Y^{q}_1.$$ So $r\leq q\restriction Y$. 
    \end{proof}
    
    For each $i<\kappa$, let $p_i\in G\cap D_i\cap N$, then there is $p_i^*\in G$ which is a common extension of $p_*$ and $p_i$, and we consider $q_i=p^*_i\restriction Y$. By the claim $p_i\leq q_i$ and therefore $q_i\in D_i\cap G\restriction Y$ where 
     $G\restriction Y=\{q\restriction Y\mid q\in G/p_*\}$.
    
    Let $G^*$ be the $V$-generic induced from $G$ for $\mathbb{M}[\vec{U}]$, and $\vec{U}$ is the generalized coherent sequence induced from $$\l E_i(\bar{\alpha}_i)\mid i<o(\bar{E})\r.$$
We shall now prove that $G\restriction Y\in V[G^*]$ and then we can choose in $V[G^*]$, $p'_i\in G\restriction Y\cap D_i$ which suffices to compute $A\in V[G^*]$, as $G\restriction Y\subseteq G\cap D_i$.  
    
    For any condition $p\in\mathbb{M}[\vec{U}]$, we define $p'\in \mathbb{P}\restriction Y$ such that $l(p)=l(p')$ along with functions $g^p_{i,j}$ recursively as follows:

    If $p=\l\l \kappa,A\r\r$ we define $p'=\l\l f,B,\bar{E}\r\r$ where $\dom(f)=Y$ $f(\bar{\alpha})=\emptyset$ for every $\bar{\alpha}$ and $B=\bigcup_{i<o(\bar{E})}\{ \mu\restriction Y\cap \dom(\mu)\mid \mu\in R_i, f_i(\mu(\bar{\alpha}_i)_0)\in A\}$. Also  $g^p_{i,j}=f_j$. Suppose that $p'$ and $g^p_{i,j}$ where defined and consider $p^\smallfrown \beta$ where $\beta\in A^p_j$ with $o^{\vec{U}}(\beta)=i$. Then let $\mu_\beta=g^p_{i,j}(\beta)$ and let $(p^\smallfrown\beta)'=p^{'\smallfrown}\mu_\beta$ and $g^{p^\smallfrown\beta}_{i,j}=g^p_{i,j}\circ \mu_\beta^{-1}$ (for the relevant $j$). For direct extensions, we just shrink the measure one set.  Since $G\restriction Y$ is above $p_*$, the definition of the $R_i$ ensures we can recover all of $G\restriction Y$ from $G^*$. For more details see the argument of \cite[Thm. 4.2]{onCohenandPrikry}.
\end{proof}

\section{Gitik's forcing project onto Cohen forcing}\label{sec: Gitik}
In the previous section we demonstrated that the natural generalizations of Magidor/Radin forcing to the non-normal context do not introduce fresh subsets to a measurable cardinal $\kappa$ provided this latter changes its cofinality to $\omega_1$ in the corresponding generic extension. As a result none of these posets project onto any $\kappa$-distributive -- including among them Cohen forcing $\Add(\kappa,1)$. This raises an obvious question: Suppose that $\mathbb{P}$ is a cardinal-preserving forcing changing the cofinality of a measurable $\kappa$ to $\omega_1$. Is it feasible at all for $\mathbb{P}$ to project onto $\Add(\kappa,1)$? In this section we show that (once again, after a suitable preparation) the natural non-normal version of Gitik's forcing from \cite{GitikNonStationary} does project onto $\Add(\kappa,1)$. We begin with a warm-up section \S\ref{SubsectionAddingOverOmega2} showing how to add a Cohen function along an $\omega^2$-sequence. Later, in \S\ref{SubsectionAddingCohenOmega1} we handle the case of interest; namely, we show how to add a Cohen function along an $\omega_1$-sequence.

\subsection{Adding a Cohen function along an $\omega^2$-sequence}\label{SubsectionAddingOverOmega2}

Let us denote our ground model by $V_0$. For the rest of this section, we shall suppose that the GCH holds in $V_0$ and that this latter model accommodates a measurable cardinal $\kappa$  with $o(\kappa)=2$. Fix $U_0\lhd U_1$ normal measures over $\kappa$.  We begin performing the preparation from \cite{TomMaster} -- similarly to what we already did in Lemma~\ref{Preparation}. Namely, 
we force with the Easton-supported iteration $\mathbb{P}_\kappa$ forcing with the Lottery sum of $\Add(\alpha,1)$ and $\{\one\}$ for inaccessibles $\alpha<\kappa$. 

Suppose that $G\subseteq \mathbb{P}_\kappa$ is $V_0$-generic. Then we can lift $j_{U_1}\colon V_0\rightarrow M_{U_1}$ to $$j^*_{U_1}\colon V_0[G]\rightarrow M_{U_1}[j^*_{U_1}(G)]\s V_0[G]$$ by letting the lottery to force trivially at $\kappa$. Standard arguments show that this is the ultrapower embedding by a normal measure $W_1$ extending $U_1$. 

Let us write $j^*_{U_1}(G)=G\ast G_{(\kappa,j_{U_1}(\kappa))}.$ Arguing as in \cite{TomMaster}  we lift the measure $U_0$ (within $M_{U_1}[G]$) to a non-normal measure $W_0$ such that: 

\begin{setup}\label{Setup}\hfill
\begin{enumerate}
    \item $\mathrm{Cub}^{V_0[G]}_\kappa\s W_0$;
    \item Forcing with the Tree Prikry forcing $\mathbb{T}_{W_0}$ yields a map $$f^*_\kappa\colon \kappa\rightarrow \kappa$$ such that if $\langle \kappa_n\mid n<\omega\rangle$ is a Tree-Prikry generic sequence then $$\textstyle f^*_\kappa:=\bigcup_{n<\omega} f_{\kappa_n}\restriction [\kappa_{n-1},\kappa_n)$$ is $M_{U_1}[G]$-generic for $\Add(\kappa,1)^{V[G]}$.
\end{enumerate}
\end{setup}

  Notice that $j^*_{U_1}(\mathbb{P}_\kappa)/G$ does not add subsets to $\kappa$ and as a result $W_0$ remains a measure on $M_{U_1}[j^*_{U_1}(G)]$ which contains the club filter $\mathrm{Cub}_\kappa^{V_0[G]}$. 
  
  Note that $f^*_\kappa$ remains generic over $M_{U_1}[j^*_{U_1}(G)]$ because the tail forcing does not add new dense open subsets to the forcing. Similarly, the same applies to $V_0[G]$ as $M_{U_1}[j^*_{U_1}(G)]$ and $V_0[G]$ agree on  $(V_0[G])_{\kappa+1}$.

\smallskip

In summary, we have produced two measures $W_0\lhd W_1$ such that $W_1$ is normal, $W_0$ is non-normal yet contains $\mathrm{Cub}^{V_0[G]}_\kappa$ (i.e., $W_0$ is a $Q$-point) and forcing with $\mathbb{T}_{W_0}$ over $V_0[G]$ introduces an $\Add(\kappa,1)^{V_0[G]}$-generic (i.e., $f^*_\kappa$). 

\begin{conv}
  Herefarter we denote by $V$ the prepared model $V_0[G]$.
\end{conv}

We follow Gitik's work \cite[\S3]{GitikNonStationary} closely.  
We need a further preparation over $V$. Let $\alpha\mapsto W_{0,\alpha}$ be a function representing $W_0$ in $M_{W_1}$; namely, $j_{U_1}(\alpha\mapsto W_{0,\alpha})(\kappa)=W_0$. Let $A\in W_1$ witnessing the following: 
\begin{enumerate}\label{BlanketAssumptions}
	\item $W_{0,\alpha}$ is a measure on $\alpha$ and if $b_\alpha:=\langle \kappa^\alpha_n\mid n<\omega\rangle$ is generic for $\mathbb{T}_{W_{0,\alpha}}$  $$\textstyle \one\forces_{\mathbb{T}_{W_{0,\alpha}}}\text{``$\dot{f}^*_\alpha=\bigcup_{n<\omega}f_{\dot{\kappa}^\alpha_n}\restriction[\dot{\kappa}^\alpha_{n-1},\dot{\kappa}^\alpha_n)$ is $V$-generic for $\Add(\alpha,1)^V$''.}$$
	\item $\alpha\notin j_{W_{0,\alpha}}(A\cap \alpha)$.
\end{enumerate} 
\begin{remark}
 To get this set $A\in W_1$ it suffices to  taking any $A\in U_1\setminus U_0$ and intersect it with the collection of all $\alpha<\kappa$ for which (1) holds.   
\end{remark}

Let $\mathbb{G}_\kappa$ be the Easton-supported iteration defined recursively as follows. The iteration just forces non-trivially at measurables $\alpha\in A$. Suppose that $\mathbb{G}_\alpha$ has been defined. If $\alpha$ is a successor point of $A$ then $|\mathbb{G}_\alpha|<\alpha$ and $W_{0,\alpha}$ lifts naturally to a $V^{\mathbb{G}_\alpha}$-measure $\overline{W}_{0,\alpha}$. In that case the $\alpha$th-stage of the iteration is declared to be $\mathbb{T}_{\overline{W}_{0,\alpha}}$. Alternatively, suppose that $\alpha$ is a limit point of $A$. Once again one can lift $W_{0,\alpha}$ to a $V^{\mathbb{G}_\alpha}$-measure $\overline{W}_{0,\alpha}$ as follows:
$$(\dot{X}_
\beta)_{G_\alpha}\in \overline{W}_{0,\alpha} :\Longleftrightarrow \exists p\in G_\alpha\ (\, p^\smallfrown p_\beta\forces^{M_{W_{\alpha,0}}}_{j_{W_{\alpha,0}}(\mathbb{G}_\alpha)}[\mathrm{id}]_{W_{0,\alpha}}\in j_{W_{\alpha,0}}(\dot{X}_\beta)),$$
where $\langle p_\beta\mid \beta<\alpha^+\rangle$ is a $\leq^{*}$-increasing sequence\ in $j_{W_{\alpha,0}}(\mathbb{G}_\alpha)/G_\alpha$ with:\footnote{The key point to obtain such an $\overline{W}_{0,\alpha}$ is Clause~(2) above. Indeed, thanks to this one has that $j_{W_{0,\alpha}}(\mathbb{G}_\alpha)$ factors as a two-step iteration $\mathbb{G}_\alpha\ast \mathbb{G}_{tail},$ where the latter is an $\alpha^+$-closed iteration with respect to the corresponding Prikry order $\leq^*$.} 
\begin{enumerate}
    \item [$(i)$] $p_\beta$ decides the sentence $``[\mathrm{id}]_{W_{\alpha,0}}\in j_{W_{\alpha,0}}(\dot{X}_\beta)$'' where $\langle \dot{X}_\beta\mid \beta<\alpha^+\rangle$ is an enumeration of all $\mathbb{G}_\alpha$-names for subsets of $\alpha$.
    \item [$(ii)$] $\langle p_\beta\mid \beta<\alpha^+\rangle$ is chosen to be minimal with respect to some well-ordering of a big enough fragment of $V$ (see \cite[\S2]{GitikNonStationary} for details).
\end{enumerate}
Finally we declare the $\alpha$th-stage of the iteration to be $\mathbb{T}_{\overline{W}_{0,\alpha}}$.   

\smallskip

The above yields the preparatory Gitik's iteration $\mathbb{G}_\kappa$. Let $G\s \mathbb{G}_\kappa$ be $V$-generic and let us extend the $V$-measures $W_0$ and $W_1$ to measures $\overline{W}_0$
 and $\overline{W}_1$ in $V[G]$. Once these measures $\overline{W}_0$
 and $\overline{W}_1$ are obtained we shall define (in $V[G]$) a poset $\mathbb{P}(\kappa,2)$ such that forcing over $V[G]$ produces:
 \begin{enumerate}
     \item An $\omega^2$-sequence $\langle \kappa_\alpha\mid \alpha<\omega^2\rangle$ converging to $\kappa$;
     \item A $V[G]$-generic function for $\Add(\kappa,1)^{V[G]}$.
 \end{enumerate}

First, since $A\notin W_0$ we have that $\kappa\notin j_{W_0}(A)$ so, as before,  $W_0$ extends to $\overline{W}_0$. Second let us show how to lift $W_1$ to $\overline{W}_1$. For this let us fix $\pi\colon \kappa\rightarrow\kappa$  such that $j_{\overline{W}_0}(\pi)([\mathrm{id}]_{\overline{W}_0})=\kappa.$ 
\begin{definition}
    A sequence of ordinals $\langle \alpha_0,\dots, \alpha_n\rangle\in [\kappa]^{<\omega}$ is called \emph{$\pi$-increasing} if $\alpha_i<\pi(\alpha_{i+1})$ for all $i<n.$
\end{definition}

\begin{definition}
	For each $\pi$-increasing sequence $t\in[\kappa]^{<\omega}$  define $$\overline{W}_1(t):=\{(\dot{X}_\alpha)_G\mid \exists p\in G\,\exists\dot{T}\; (p^\smallfrown \{\langle t, \dot{T}\rangle\}^\smallfrown p_\alpha\forces^{M_{W_1}}_{j_{W_1}(\mathbb{G}_\kappa)}\kappa\in j_{W_1}(\dot{X}_\alpha))\},$$ 
	where $\langle \dot{X}_\alpha\mid \alpha<\kappa^+\rangle$ and $\langle p_\alpha\mid\alpha<\kappa^+\rangle$ are as in \cite[\S3]{GitikNonStationary}.
	\end{definition}
\begin{remark}
    By our inductive construction the $\kappa$th-stage of the iteration $j_{W_1}(\mathbb{G}_\kappa)$ is  exactly the Tree Prikry forcing $\mathbb{T}_{\overline{W}_0}.$ For this one has to argue that the lifting of the measure $W_0$ is the same  both when computed in $V[G]$ and in $M_{W_1}[G]$. This is where the well-ordering of the universe plays an essential role. We defer to provide further details about this aspect and instead refer our readers to \cite[Lemma~2.1]{GitikNonStationary}.
\end{remark}
 
	It is not hard to check that $\overline{W}_1(t)$ is a measure in $V[G]$ concentrating on
	$$\{\alpha<\kappa\mid \text{``The Prikry sequence $b_\alpha$ for $\mathbb{T}_{\overline{W}_{0,\alpha}}$ over $V[G_\alpha]$ end-extends $t$''}\}.$$
	In addition the following properties hold upon  $\overline{W}_1(t)$: 
 \begin{enumerate}
     \item $\overline{W}_{1}(t)$ is not normal as it concentrates on singular cardinals.
     \item  Since $\mathbb{G}_\kappa$ is $\kappa$-cc, $W_1$ is normal and $W_1\s \overline{W}_{1}(t)$,
      $$(\mathrm{Cub}_\kappa)^{V[G]}\s \overline{W}_{1}(t).$$
 \end{enumerate}

 Using $\overline{W}_0$ and $\langle \overline{W}_{1}(t)\mid t\in [\kappa]^{<\omega}\,\wedge\, \text{$t$ is $\pi$-increasing}\rangle$ we present Gitik's forcing $\mathbb{P}(\kappa,2)$ adding an $\omega^2$-sequence to $\kappa$ without adding bounded sets. 

 \begin{definition}
    A sequence $t=\langle \xi_0,\dots,\xi_k\rangle\in [\alpha]^{<\omega}$ is \emph{$2$-coherent} if
    \begin{enumerate}
        \item $t$ is increasing;
        \item $o^{\vec{U}}(\xi_i)\leq 1$ for all $i<k;$
        \item for all $i<k$ let $i^*\leq i$ be the first index such that $$\text{$o^{\vec{U}}(\xi_j)<o^{\vec{U}}(\xi_i)$ for all $i^*\leq j<i$.}$$ Then, $b_{\xi_i}$ end-extends $\bigcup_{i^*\leq j<i}(b_{\xi_j}\cup\{\xi_j\})$  where each $b_{\xi_\ell}$ denotes the generic sequence added by $\mathbb{T}_{\overline{W}_{\xi_\ell}}$ over $V^{\mathbb{G}_{\xi_\ell}}$. 
    \end{enumerate}
    \end{definition}
    Given a $2$-coherent sequence $t$ we denote
    $$\textstyle b_t:=\bigcup_{\xi\in r} b_\xi.$$

   Also we denote by $t\restriction 1$ the following sequence: If $o^{\vec{U}}(\max(t))= 1$ then $t\restriction \bar\beta:=\varnothing$. Otherwise, let $i^*<|t|$ be the first index with $o^{\vec{U}}(\xi_j)=0$ for all $i^*\leq j<i$ and set $t\restriction \bar{\beta}:=\langle\xi_{i^*}, \dots, \xi_{|t|-1}\rangle.$

    \begin{definition}
        A condition in $\mathbb{P}(\alpha,2)$ is a pair $\langle t, T\rangle$ where:
    \begin{enumerate}
    	\item  $t$ is $2$-coherent;
    	\item  $T$ is a tree on $[\kappa]^{<\omega}$ with trunk $\varnothing \in T$;
    	\item  $t{}^\smallfrown s$ is $2$-coherent for all $s\in T$, $\Succ_{T}(s)=\bigcup_{\bar{\beta}<2} \Succ_{T,\bar{\beta}}(s)$ and
    	$$\mathrm{Succ}_{T,0}(s)\in \overline{W}_0\;\wedge\;\mathrm{Succ}_{T,1}(s)\in \overline{W}_1((t{}^\smallfrown s)\restriction 1).$$ 
    \end{enumerate}

    Given $\langle t, T\rangle, \langle s, S\rangle\in \mathbb{P}(\alpha,2)$  write $\langle s,S\rangle \leq^* \langle t, T\rangle$ iff $t=s$ and $T\s S$.
    
    Also, say that $\langle t, T\rangle$ and $\langle s,S\rangle$ are \emph{equivalent} if $b_t=b_s$ and $T=S$. 
    \end{definition}
 
 
 Let $H\s \mathbb{P}(\kappa,2)$	a generic filter over $V[G]$. Let $C_H$ be the $\omega^2$-sequence added by $H$ and $\langle \kappa_n\mid n<\omega\rangle$  be the increasing enumeration of the limit points of $C_H$ (see Definition~\ref{GenericObject}). Then $$\textstyle C_H=\bigcup_{n<\omega}b_{\kappa_n}\cup\{\kappa_n\}.$$ 
 For each $n<\omega$ 
 the Tree Prikry generic $b_{\kappa_n}$ for $\mathbb{T}_{\overline{W}_{0,\kappa_n}}$ (over $V[G_{\kappa_n}]$) is, by the Mathias criterion for the Tree Prikry forcing \cite{TomTreePrikry}, $V$-generic for $\mathbb{T}_{W_{0,\kappa_n}}$. Thus, by our Clause~(1) in page~\pageref{BlanketAssumptions}, this  generates a  $V$-generic Cohen function $f^*_{\kappa_n}$ for $\Add(\kappa_n,1)^V.$ 
 \begin{lemma}
     $f^*_{\kappa_n}$ induces a $V[G_{\kappa_n}]$-generic
 Cohen  for $\Add(\kappa_n,1)^{V[G_{\kappa_n}]}$. 
 \end{lemma}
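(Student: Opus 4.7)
The plan is to adapt the projection argument of Claim~\ref{GluingTheCohens-PrikryCase} to the generic extension $V[G_{\kappa_n}]$, using the lifted non-normal measure $\overline{W}_{0,\kappa_n}$ in place of the original measure $W_0$ on $\kappa$. Once $\overline{W}_{0,\kappa_n}$ is shown to extend the $V[G_{\kappa_n}]$-club filter on $\kappa_n$, the same amalgamation producing $f^*_{\kappa_n}$ will meet any prescribed maximal antichain of $\Add(\kappa_n,1)^{V[G_{\kappa_n}]}$.

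First, I would verify that $\overline{W}_{0,\kappa_n}$ contains the $V[G_{\kappa_n}]$-club filter on $\kappa_n$. The preparation $\mathbb{G}_{\kappa_n}$ is an Easton-supported iteration of length $\kappa_n$ whose iterands $\mathbb{T}_{\overline{W}_{0,\alpha}}$ are each $\alpha^+$-cc; since $\kappa_n$ is Mahlo in $V$, $\mathbb{G}_{\kappa_n}$ is $\kappa_n$-cc in $V$. Consequently every $V[G_{\kappa_n}]$-club of $\kappa_n$ contains a $V$-club, and $V$-clubs belong to $W_{0,\kappa_n}\subseteq\overline{W}_{0,\kappa_n}$ by Setup~\ref{Setup}.

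Second, I would mimic the density analysis of Claim~\ref{GluingTheCohens-PrikryCase} inside $V[G_{\kappa_n}]$: given $A\in V[G_{\kappa_n}]$ a maximal antichain of $\Add(\kappa_n,1)^{V[G_{\kappa_n}]}$, for each regular $\alpha<\kappa_n$ and $p\in\Add(\alpha,1)^{V[G_{\kappa_n}]}$ let $\beta(p)<\kappa_n$ be the least $\beta$ such that $p$ is compatible with an element of $A\cap\Add(\beta,1)^{V[G_{\kappa_n}]}$, and put $f(\alpha)=\sup_p \beta(p)$. The club $C(f)\in V[G_{\kappa_n}]$ of closure points is $\overline{W}_{0,\kappa_n}$-measure one by the first step; intersecting it with the measure-one set $\mathcal{C}$ from Setup~\ref{Setup} (which remains in $\overline{W}_{0,\kappa_n}$), the Mathias criterion applied to $b_{\kappa_n}$ yields $\kappa^{\kappa_n}_m\in C(f)\cap\mathcal{C}$ for all but finitely many $m$. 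For such $m$, the closure of $\kappa^{\kappa_n}_m$ under $f$ combined with the Cohen genericity of $f_{\kappa^{\kappa_n}_{m+1}}$ produces, via the argument of \cite[Proposition~7.3]{TomMaster}, an element of $A$ lying below $f^*_{\kappa_n}$.

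The main obstacle I foresee is verifying that the $V$-Cohen generics $f_\alpha$ retain the genericity needed to handle antichains coming from $V[G_{\kappa_n}]$, rather than only $V$. This will be handled by factoring $\mathbb{G}_{\kappa_n}=\mathbb{G}_\alpha\ast\dot{\mathbb{G}}_{[\alpha,\kappa_n)}$: Easton's lemma applied to the $\alpha$-cc forcing $\mathbb{G}_\alpha$ and the $\alpha$-closed poset $\Add(\alpha,1)^V$ gives that $f_\alpha$ is $V[G_\alpha]$-generic for $\Add(\alpha,1)^V$, while the tail $\dot{\mathbb{G}}_{[\alpha,\kappa_n)}$ is $\alpha^+$-distributive over $V[G_\alpha]$ by virtue of its Prikry-type iterands living above $\alpha$, so it adds no new dense open subsets of $\Add(\alpha,1)^V$ beyond those already present in $V[G_\alpha]$. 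This is what permits the projection argument to locate an element of the $V[G_{\kappa_n}]$-antichain $A$ within the block $[\kappa^{\kappa_n}_m,\kappa^{\kappa_n}_{m+1})$ of $f^*_{\kappa_n}$.
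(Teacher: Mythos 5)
Your approach --- rerunning the amalgamation argument of Claim~\ref{GluingTheCohens-PrikryCase} inside $V[G_{\kappa_n}]$ --- is genuinely different from the paper's proof, which instead uses term-space forcing: since $\mathbb{G}_{\kappa_n}$ is $\kappa_n$-cc of size $\kappa_n$, the poset $\Add(\kappa_n,1)^V$ is (up to forcing equivalence) the term-space forcing $\mathbb{A}(\mathbb{G}_{\kappa_n},\dot{\Add}(\kappa_n,1))$, and the transfer theorem for term forcing then yields the $V[G_{\kappa_n}]$-generic directly from $f^*_{\kappa_n}$ and $G_{\kappa_n}$.

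Your first two steps are fine, but there is a real gap in Step~3, exactly at the obstacle you flagged. You write that Easton's lemma, applied to the $\alpha$-cc poset $\mathbb{G}_\alpha$ and the $\alpha$-closed poset $\Add(\alpha,1)^V$, yields that $f_\alpha$ is $V[G_\alpha]$-generic for $\Add(\alpha,1)^V$. This cannot be correct: $f_\alpha$ was added by the preliminary lottery iteration $\mathbb{P}_\kappa$, so $f_\alpha\in V\subseteq V[G_\alpha]$, and a filter already lying in the target model cannot be generic over it for a nontrivial poset (indeed $f_\alpha$ is not even $V$-generic for $\Add(\alpha,1)^V$). Easton's lemma is a statement about distributivity and chain conditions of a product; it does not promote the genericity of a fixed object of $V$ to genericity over $V[G_\alpha]$. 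What your density argument actually requires at the localized level $\beta$ is that the glued function $f^*_{\kappa_n}\restriction\beta$ generates a $V[G_\beta]$-generic filter for $\Add(\beta,1)^{V[G_\beta]}$ --- but that is the lemma itself with $\beta$ in place of $\kappa_n$, and establishing it would require precisely the term-forcing argument that the paper applies directly at $\kappa_n$. Your $\alpha^+$-distributivity observation about the tail $\dot{\mathbb{G}}_{[\alpha,\kappa_n)}$ is correct but only handles the transition from $V[G_\alpha]$ to $V[G_{\kappa_n}]$; the essential step, crossing from $V$-genericity to $V[G_\alpha]$-genericity, remains unaddressed, so the detour through the density argument does not buy anything here.
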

 \begin{proof}
  Since $\mathbb{G}_{\kappa_n}$ is an $\kappa_n$-cc forcing of size $\kappa_n$, the poset $\Add(\kappa_n,1)^V$ is isomorphic to the term-space forcing $\mathbb{A}(\mathbb{G}_\alpha, \dot{\Add}(\alpha,1))$ (see \cite[p.9]{CumGCH}). Thus, modulo isomorphisms, $f^*_{\kappa_n}$ is $V$-generic for this latter poset. By standard arguments about the term space forcing (see e.g. \cite[Proposition~22.3]{Cummings-handbook}), $f^*_{\kappa_n}$ and $G_{\kappa_n}$ together induce a $V[G_{\kappa_n}]$-generic filter for $\Add(\kappa_n,1)^{V[G_{\kappa_n}]}$. This completes the verification of the lemma.
 \end{proof}
For simplicity, let us keep calling $f^*_{\kappa_n}$ the generic for $\Add(\kappa_n,1)^{V[G_{\kappa_n}]}$.
\begin{lemma}\label{Genericityincase2}
    $f^*_\kappa:=\bigcup_{n<\omega} f^*_{\kappa_n}\restriction [\kappa_{n-1},\kappa_n)$ is $V[G]$-generic for $\Add(\kappa,1)^{V[G]}$.
\end{lemma}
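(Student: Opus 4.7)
The plan is to adapt the amalgamation argument from Claim~\ref{GluingTheCohens-PrikryCase} (whose details are in \cite[Proposition~7.3]{TomMaster}). Given a maximal antichain $A\in V[G]$ for $\Add(\kappa,1)^{V[G]}$, I will define an auxiliary function $f\colon \kappa\to\kappa$ by setting, for each regular $\alpha<\kappa$ and each $p\in \Add(\alpha,1)^{V[G]}$, $\beta(p)<\alpha$ to be the least ordinal (when it exists) such that $p$ is compatible with some member of $A\cap \Add(\beta(p),1)^{V[G]}$, and then putting $f(\alpha):=\sup_{p}\beta(p)$ (and $f(\alpha):=0$ if $\alpha$ is not regular). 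Using the $\kappa^+$-cc of $\Add(\kappa,1)^{V[G]}$ and standard bookkeeping, $f$ is well-defined on a club and its closure points $C(f)$ form a club in $V[G]$; moreover, for each $\delta\in C(f)$ and each $p\in \Add(\delta,1)^{V[G]}$, the ordinal $\beta(p)$ exists and lies strictly below $\delta$.

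Next I would verify a Mathias-style genericity criterion: the sequence $\langle \kappa_n\mid n<\omega\rangle$ is eventually contained in every $V[G]$-club. Indeed, for any club $C\in V[G]$, the set of conditions $\langle t,T\rangle\in\mathbb{P}(\kappa,2)$ whose order-$1$ successor sets $\Succ_{T,1}(s)$ all lie inside $C$ is $\leq^*$-dense: for each $s\in T$ we can shrink $\Succ_{T,1}(s)$ to $\Succ_{T,1}(s)\cap C$, which remains $\overline{W}_1((t^\smallfrown s)\restriction 1)$-large since $\mathrm{Cub}_\kappa^{V[G]}\subseteq \overline{W}_1(r)$ for every $\pi$-increasing $r$. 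Applying this criterion to $C:=C(f)$ yields some $n_*<\omega$ with $\kappa_n\in C(f)$ for all $n\geq n_*$.

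To conclude, I will fix some $n\geq n_*$ and consider $p_n:=f^*_\kappa\restriction \kappa_n$, a condition in $\Add(\kappa,1)^{V[G]}$ of size $\kappa_n<\kappa$. Since $\kappa_n\in C(f)$, the closure property produces $q\in A$ with $\dom(q)\subseteq \beta(p_n)<\kappa_n$ compatible with $p_n$; because $\dom(q)\subseteq\dom(p_n)$, compatibility forces $q\subseteq p_n\subseteq f^*_\kappa$. Hence $f^*_\kappa$ extends a member of $A$, which completes the verification of genericity.

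The main obstacle will be the rigorous proof of the Mathias-style criterion for $\mathbb{P}(\kappa,2)$, which must handle trees containing both order-$0$ and order-$1$ successors while preserving $2$-coherence after shrinking to the club; the amalgamation must respect the inductively defined condition that the order-$0$ block $b_\xi$ attached to each order-$1$ node $\xi$ end-extends the previous order-$0$ blocks. A secondary technicality is confirming that $\Add(\kappa_n,1)^{V[G]}=\Add(\kappa_n,1)^{V[G_{\kappa_n}]}$, so that the $V[G_{\kappa_n}]$-generic $f^*_{\kappa_n}$ really yields $V[G]$-conditions; this should follow from the tail $\mathbb{G}_\kappa/G_{\kappa_n}$ being sufficiently closed in its Prikry order so as not to add new bounded subsets of $\kappa_n$.
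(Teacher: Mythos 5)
Your first two paragraphs align with the paper's proof: the function $f$, the club of closure points, and the Mathias-style verification that $\langle \kappa_n\mid n<\omega\rangle$ is eventually contained in every $V[G]$-club (via $\mathrm{Cub}_\kappa^{V[G]}\subseteq\overline{W}_1(t)$) all match. The third paragraph, however, has a genuine gap. You write that since $\kappa_n\in C(f)$ the closure property ``produces $q\in A$ with $\dom(q)\subseteq\beta(p_n)<\kappa_n$''. This is not what closure gives you. Closure at $\kappa_n$ says: for every $\alpha<\kappa_n$ and every $p\in\Add(\alpha,1)^{V[G]}$, $\beta(p)<\kappa_n$. But $p_n=f^*_\kappa\restriction\kappa_n$ has domain exactly $\kappa_n$, so it does not belong to $\Add(\alpha,1)^{V[G]}$ for any $\alpha<\kappa_n$, and the bound $\beta(p_n)<\kappa_n$ simply does not follow. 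It is perfectly consistent that every $q\in A$ compatible with $p_n$ has domain sticking out past $\kappa_n$, in which case $p_n$ extends no member of $A\cap\Add(\kappa_n,1)^{V[G]}$ and your argument collapses.

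What the closure property actually buys --- and what the paper uses it for --- is that $A\cap\Add(\kappa_n,1)^{V[G]}$ is a \emph{maximal antichain of} $\Add(\kappa_n,1)^{V[G]}$: for every \emph{bounded} $p\in\Add(\kappa_n,1)^{V[G]}$ there is a compatible $q\in A$ living below $\kappa_n$. To pass from this to ``$p_n$ extends a member of $A$'' you must invoke that $p_n$ (equivalently $f^*_{\kappa_n}$, up to a bounded modification) is $\Add(\kappa_n,1)$-generic over an inner model \emph{which contains the restricted antichain as an element}. The natural candidate model is $V[G_{\kappa_n}]$, over which $f^*_{\kappa_n}$ is generic by construction; but a priori $A\cap\Add(\kappa_n,1)^{V[G]}$ is only computed in $V[G]$, not in $V[G_{\kappa_n}]$. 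Closing this gap is precisely what the paper's intermediate claim does: by a reflection argument with $\overline{W}_1(t)$, the set $\{\alpha<\kappa\mid \mathcal{A}\cap\Add(\alpha,1)^{V[G_\alpha]}\in V[G_\alpha]\}$ is $\overline{W}_1(t)$-large for all $t$, so after shrinking one may assume $A\cap\Add(\kappa_n,1)^{V[G]}\in V[G_{\kappa_n}]$ and then run the genericity argument. Your ``secondary technicality'' ($\Add(\kappa_n,1)^{V[G]}=\Add(\kappa_n,1)^{V[G_{\kappa_n}]}$) is a much weaker statement about the poset, not about the antichain, and does not substitute for this step. In short, you need both the subclaim that the restricted antichain lives in $V[G_{\kappa_n}]$ and the appeal to genericity of $f^*_{\kappa_n}$ over $V[G_{\kappa_n}]$; without them the conclusion does not follow.
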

\begin{proof}
    Let $\mathcal{A}\in V[G]$ be a maximal antichain for $\Add(\kappa,1)^{V[G]}$. Consider the function $f\colon\kappa\rightarrow \kappa$ defined in $V[G]$ as follows. For each $p\in \Add(\alpha,1)^{V[G]}$ let $\beta(p)<\kappa$ be the least ordinal for which there is   $q_p\in \mathcal{A}\cap \Add(\beta(p),1)^{V[G]}$ compatible with $p$. Set $f(\alpha):=\sup_{p\in \Add(\alpha,1)^{V[G]}}\beta(p).$

    Let $C$ be the club of closure points of $f$. 
    Since $C, A\in \bigcap_{t\in[\kappa]^{<\omega}}\overline{W}_1(t)$ it follows that $\langle\kappa_n\mid n\geq n_0\rangle\s A\cap C$ for some $n_0<\omega$. Let $\kappa_n$ be one of such ordinals. Note that $\mathcal{A}\cap \Add(\kappa_n,1)^{V[G]}=\mathcal{A}\cap \Add(\kappa_n,1)^{V[G_{\kappa_n}]}$ is a maximal antichain for $\Add(\kappa_n,1)^{V[G_{\kappa_n}]}$. By further shrinking $A\cap C$ we may assume (as the next claim demonstrates) that $\mathcal{A}\cap \Add(\kappa_n,1)^{V[G_{\kappa_n}]}\in V[G_{\kappa_n}]$:
    \begin{claim}
$\{\alpha<\kappa\mid \mathcal{A}\cap \Add(\alpha,1)^{V[G_\alpha]}\in V[G_\alpha]\}\in \overline{W}_1(t)$ for all  $t$.
    \end{claim}
    \begin{proof}[Proof of claim]
    Fix an arbitrary $\pi$-increasing sequence $t$. Fix $\dot{\mathcal{A}}$ and $\dot{X}$,  $\mathbb{G}_\kappa$-names for $\mathcal{A}$ and the above-displayed set, respectively. Let $p\in G$ forcing the above properties about $\dot{\mathcal{A}}$ and $\dot{X}.$
    We can moreover assume  that $\dot{\mathcal{A}}\s V_\kappa$ because $\mathbb{G}_\kappa$ is $\kappa$-cc and $\mathbb{G}_\kappa\s V_\kappa.$ In particular, $j_{W_1}(\dot{\mathcal{A}})\cap V_\kappa=\dot{\mathcal{A}}\in M_{W_1}$ and thus $\dot{\mathcal{A}}_G\in M_{W_1}[G]$. Note that this is still true in any generic extension of $M_{W_1}[G]$ by the tail forcing $j_{W_1}(\mathbb{G}_\kappa)/G$. Therefore, there are  $p\leq q\in G$,  $\langle t, \dot{T}\rangle\in \mathbb{P}(\kappa,2)$ and $p_\alpha$ with $$q\cup\{\langle t, \dot{T}\rangle\}\cup p_\alpha\forces_{j_{W_1}(\mathbb{G}_\kappa)}j_{W_1}(\dot{\mathcal{A}})\cap \Add(\kappa,1)^{V[\dot{G}]}\in M_{W_1}[\dot{G}].$$
    Since $j_{W_1}(q)=q$ forces the same relationship between $j_{W_1}(\dot{\mathcal{A}})$ and $j_{W_1}(\dot{X})$, the above shows that $q\cup\{\langle t, \dot{T}\rangle\}\cup p_\alpha$ forces $``\kappa\in j_{W_1}(\dot{X})$''. By definition, this is the same as saying that $\dot{X}_G\in \overline{W}_1(t)$.
    \end{proof}
    So, $\mathcal{A}$ must include a restriction of the function $f^*\restriction\kappa_n$ in that this is a bounded modification of $f^*_{\kappa_n}$, which was generic over $V[G_{\kappa_n}]$. Thus, the antichain  $\mathcal{A}$ intersects $f^*$ and we are done.
\end{proof}
	

 \subsection{Adding a Cohen function along an $\omega_1$-sequence}\label{SubsectionAddingCohenOmega1}
As in the previous section our ground model will be denoted by $V_0$ and we shall assume that both the GCH holds and that the model accomodates a Mitchell-increasing sequence $\l U_i\mid i<\omega_1\r$  of normal measures over $\kappa$. 
Again, we perform the same forcing preparation $\mathbb{P}_\kappa$ of \S\ref{SubsectionAddingOverOmega2} based on the lottery sum of the trivial forcing and $\Add(\alpha,1)$ for all inaccessibles $\alpha<\kappa$. 

\smallskip

 Let $G\s \mathbb{P}_\kappa$ be $V_0$-generic. 
\begin{lemma}
    In $V_0[G]$, $U_i$ extends to a $\kappa$-complete ultrafilter $W_i$ such that:
    \begin{enumerate}
        \item $\langle W_i\mid i<\omega_1\rangle$ is Mitchell increasing;
        \item $W_i$ is normal except whenever $i=0$;
        \item $(\mathrm{Cub}_\kappa)^{V_0[G]}\s W_0$ and $W_0$ is such that forcing with the Tree-Prikry forcing $\mathbb{T}_{W_0}$ over $V_0[G]$ introduces an $\Add(\kappa,1)^{V_0[G]}$-generic.
    \end{enumerate}
\end{lemma}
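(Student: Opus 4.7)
The plan is to mimic, length-$\omega_1$ many times, the two-measure preparation of Setup~\ref{Setup}, leveraging Mitchell coherence $U_0 \lhd U_1 \lhd \dots$ to arrange that each lift $W_i$ sits inside the ultrapower $M_{W_{i'}}$ for every $i<i'<\omega_1$.

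\smallskip

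\textbf{The normal lifts $W_i$ for $i\geq 1$.} Fix $i\geq 1$. I would lift $j_{U_i}\colon V_0\to M_{U_i}$ by resolving the lottery at stage $\kappa$ in $j_{U_i}(\mathbb{P}_\kappa)$ trivially. The tail $j_{U_i}(\mathbb{P}_\kappa)/(G\ast\{\one\})$ has size $j_{U_i}(\kappa)\leq \kappa^{++}$ in $M_{U_i}[G]$, is $\kappa^+$-closed there, and $M_{U_i}^\kappa\subseteq M_{U_i}$; using the GCH I would build a master-condition decreasing sequence inside $V_0[G]$ and meet all dense subsets of the tail poset appearing in $M_{U_i}[G]$. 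The resulting $V_0[G]$-generic $H_i$ produces a lift $j^*_{U_i}\colon V_0[G]\to M_{U_i}[G\ast\{\one\}\ast H_i]$ and $W_i:=\{X\subseteq\kappa\mid \kappa\in j^*_{U_i}(X)\}$ is a normal extension of $U_i$ containing $(\mathrm{Cub}_\kappa)^{V_0[G]}$.

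\smallskip

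\textbf{The non-normal lift $W_0$.} Since $U_0\lhd U_1$, we have $U_0\in M_{U_1}$. Separately from the trivial lift used to define $W_1$, I would perform the construction of Lemma~\ref{Preparation} relative to the pair $(U_1,U_0)$: lift $j_{U_1}$ once more but now choose the lottery at $\kappa$ to opt for $\Add(\kappa,1)$, generated by a specific $\Add(\kappa,1)^{M_{U_1}[G]}$-generic $g_\kappa$ with $g_\kappa(0)=\kappa$; then pass through the second factor by applying $j_{U_0}$ inside $M_{U_1}$ to $j_{U_1}$, producing an embedding $j_*\colon V_0[G]\to N$ and a seed $\sigma:=j_{U_1}(\kappa)\in (\kappa, j_*(\kappa))$ with $j_*(\pi)(\sigma)=\kappa$ for a suitable $\pi\colon\kappa\to\kappa$ derived from $g_\kappa$. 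Set $W_0:=\{X\subseteq\kappa\mid \sigma\in j_*(X)\}$. Then $W_0$ is $\kappa$-complete, extends $U_0$, contains $(\mathrm{Cub}_\kappa)^{V_0[G]}$ (because normal cofinal sets in $V_0[G]$ are reflected by the normal factor $j_{U_0}$ inside $M_{U_1}$), and the Tree-Prikry $\mathbb{T}_{W_0}$ projects onto $\Add(\kappa,1)^{V_0[G]}$ by exactly the amalgamation argument of Claim~\ref{GluingTheCohens-PrikryCase}.

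\smallskip

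\textbf{Mitchell coherence and the main obstacle.} To see $W_j\in M_{W_i}$ for $j<i$, the point is that $\mathcal{P}(\kappa)^{V_0[G]}\subseteq M_{W_i}$ (both since the preparation is $\kappa$-cc and since $j^*_{U_i}$ is an ultrapower lift by a normal extension), and $W_j$ is defined canonically from $U_j$, $G$ and a master-condition sequence which, by elementarity, are all computed uniformly inside $M_{U_i}$ (hence inside $M_{W_i}$). Concretely, the same definition of $W_j$ given above, when interpreted in $M_{W_i}$ with its copy of $U_j$ and $G$, yields the same measure; hence $W_j\in M_{W_i}$. The delicate point, and the main obstacle, is choosing the master-condition sequences and the specific generic $g_\kappa$ of the $W_0$-construction \emph{coherently} across the $\omega_1$-many lifts, so that the non-normal $W_0$ agrees with its reinterpretation in each $M_{W_i}$. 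This is handled exactly as in \S\ref{SubsectionAddingOverOmega2}: fix a well-ordering of a sufficiently large fragment of $V_0[G]$ and always take minimal master-condition sequences below it, thereby forcing the canonical choice to be preserved by every $j^*_{U_i}$.
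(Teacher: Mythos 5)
The genuine gap is in the construction of $W_0$. The paper lifts the \emph{second iterate} of $U_0$, namely $j_{U_0^2}=j^{M_{U_0}}_{j_{U_0}(U_0)}\circ j_{U_0}$, with the lottery trivial at $\kappa$ and Cohen at $j_{U_0}(\kappa)$, and derives $W_0$ from the seed $j_{U_0}(\kappa)$. That seed is exactly $\crit\bigl(j^{M_{U_0}}_{j_{U_0}(U_0)}\bigr)$, so for $X\s\kappa$ in $V_0$ one has $j_{U_0}(\kappa)\in j^{M_{U_0}}_{j_{U_0}(U_0)}(j_{U_0}(X))$ if and only if $j_{U_0}(X)\in j_{U_0}(U_0)$, i.e.\ $X\in U_0$; the derived measure thus restricts on $\mathcal{P}(\kappa)^{V_0}$ to $U_0$ itself, and the non-normality on $\mathcal{P}(\kappa)^{V_0[G]}$ comes purely from the Cohen coding at $j_{U_0}(\kappa)$. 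You instead take $j_*:=j^{M_{U_1}}_{U_0}\circ j_{U_1}$ with seed $\sigma:=j_{U_1}(\kappa)$, but $\crit\bigl(j^{M_{U_1}}_{U_0}\bigr)=\kappa$, not $j_{U_1}(\kappa)$, and indeed $j^{M_{U_1}}_{U_0}(\kappa)\le\kappa^+<j_{U_1}(\kappa)$. So $\sigma$ is an arbitrary ordinal in $\bigl(j^{M_{U_1}}_{U_0}(\kappa),j_*(\kappa)\bigr)$, and the family $\{X\s\kappa\mid\sigma\in j_*(X)\}$ is governed by whatever function $g\colon\kappa\to j_{U_1}(\kappa)$ in $M_{U_1}$ represents $j_{U_1}(\kappa)$ in $\mathrm{Ult}(M_{U_1},U_0)$. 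Since $g(\alpha)\ge\kappa$ a.e.\ $U_0$, membership of $X$ is decided by the segment $j_{U_1}(X)\cap[\kappa,j_{U_1}(\kappa))$, which is controlled by $U_1$ and the choice of $g$, not by $U_0$, so there is no reason your $W_0$ extends $U_0$. Relatedly, your Cohen generic is planted at the wrong coordinate: to get a regressive $\pi$ with $j_*(\pi)(\sigma)=\kappa$ one needs the Cohen at the seed $\sigma$ in the \emph{second} factor of a two-step iteration, exactly as Lemma~\ref{Preparation} places $g_{j(\kappa)}$ at $j(\kappa)$ rather than at $\kappa$.

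On the normal lifts and the Mitchell-coherence claim, your outline is in the spirit of the paper's argument but skips the decisive step. The paper constructs each generic $G_i$ for $j_{U_i}(\mathbb{P}_\kappa)$ \emph{inside} $M_{U_{i+1}}[G]$ (where the tail forcing has only $\kappa^+$ many dense sets and $(M_{U_i})^{M_{U_{i+1}}}$ agrees with $M_{U_i}$ up to $(V_0)_{j_{U_i}(\kappa)+1}$), notes $G_i\in M_{U_j}[G]$ for all $j>i$, and then proves $W_i\in M_{W_\ell}$ by an explicit commutative-diagram computation giving $(j_{U_i})^{M_{U_\ell}}\restriction(V_0)_{\kappa+1}=j_{U_i}\restriction(V_0)_{\kappa+1}$, whence $M_{W_\ell}$ recomputes $W_i$ from $G_i$ and $\kappa$-cc names. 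Your appeal to ``elementarity'' and ``minimal master-condition sequences with respect to a fixed well-ordering'' gestures at the same phenomenon but does not verify that the $V_0[G]$-built filter is seen by $M_{W_\ell}$, nor that the two ultrapower maps agree on the relevant names; that absoluteness is precisely what the paper's commutative-diagram lemma supplies.
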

\begin{proof}
     We define a sequence a generics $\l G_i\mid i<\omega_1\r$ so that $G_i\restriction\kappa=G$ and
  $$ \text{
     $G_i\in M_{U_{i+1}}[G]$ is $(M_{U_i})^{M_{U_{i+1}}}$-generic for $j_{U_i}(\mathbb{P}_\kappa)$.}$$
        The point is the following: from the perspective of $M_{U_{i+1}}[G]$, $j_{U_i}(\mathbb{P}_\kappa)/G$ is a forcing of cardinality $\kappa^+$ and there are only $\kappa^+$-many maximal antichains to meet. In addition, by the usual arguments involving the commutative diagram between $j_{U_i}$ and $j_{U_{i+1}}$, both $M_{U_i}$ and $(M_{U_{i}})^{M_{U_{i+1}}}$ agree on  $(V_0)_{j_{U_i}(\kappa)+1}$ and therefore $G_{i}$ is $M_{U_i}$-generic. Note that $G_i\in M_{U_j}[G]$ for all $i<j$.  
        \smallskip

 For each $0<i<\omega_1$  lift $j_{U_i}\subseteq j^*_i:V_0[G]\rightarrow M_{U_i}[G_i]$ and let $W_i\in V_0[G]$ be the lifted measure. Clearly, $M_{W_i}=M_{U_i}[G_i]$. For $i=0$, we lift the second iteration $j_{U_0^2}\subseteq j_{W_0}:V[G]\rightarrow M_{U^2}[G_0]$ so that $W_0$ concentrate on Cohens. Namely, in the case where $i=0$ the measure $W_0$  will be non-normal, yet it will satisfy the blanket assumptions described in Setup~\ref{Setup} of page~\pageref{Setup}.

\begin{claim}
    $W_{i}\in M_{W_{\ell}}$ for all $\ell>i$.  
\end{claim}
\begin{proof}[Proof of claim]
Let us first assume that $\ell>i$, and consider the standard commutative diagram between the measures $U_{\ell}$ and $U_i$; namely, 
\begin{displaymath}
\begin{tikzcd}
 & V_0 \arrow{r}{j_{U_\ell}} \arrow{d}{j_{U_i}} & M_{U_\ell} \arrow{d}{(j_{U_i})^{M_{U_\ell}} } & \\
& M_{U_i} \arrow{r}{j_{U_i}(U_\ell)} & N,  
\end{tikzcd}
\end{displaymath}
where $(j_{U_i})^{M_\ell}$ stands for the ultrapower embedding by $U_i$ over $M_{U_\ell}$.

Since $W_i$ is the lifting of $j_{U_i}$ by the poset $j_{U_i}(\mathbb{P}_\kappa)$, $X\in W_i$ if and only if there is a $\mathbb{P}_\kappa$-name  $\dot{X}$ for a subset of $\kappa$ such that $\dot{X}_{G_i}=X$ and $$p\forces^{M_{U_i}}[\mathrm{id}]_{W_{i}}\in j_{U_i}(\dot{X})$$ for some $p\in G_i$. Thus, note that $W_i$ is definable via $G_i$ and $j_{U_i}$. 

On the one hand, $G_i\in M_{U_\ell}[G]\s M_{W_\ell}$. On the other hand,  by $\kappa$-ccness of $\mathbb{P}_\kappa$ every $\mathbb{P}_\kappa$-name  $\dot{X}$ for a subset of $\kappa$ can be assumed to be a member of $(V_0)_{\kappa+1}$. We shall next show that $$(j_{U_i})^{M_{U_\ell}}\restriction (V_0)_{\kappa+1}=j_{U_i}\restriction (V_0)_{\kappa+1}$$ which combined with our previous comments will establish $W_i\in M_{W_\ell}$.

To simplify notations, let us denote $j_{\ell,i}:=(j_{U_i})^{M_{U_\ell}}$ and $j_{i,\ell}:=j_{U_i}(U_\ell)$. Fix $P\s (V_{0})_{\kappa}$. Since $\crit(j_{i,\ell})=j_{U_i}(\kappa)$ we have 
$$j_{U_i}(P)=j_{i,\ell}(j_{U_i}(P))\cap (M_{U_i})_{j_{U_i}(\kappa)}=j_{i,\ell}(j_{U_i}(P))\cap N_{j_{U_i}(\kappa)}.$$
By commutativity of the diagram this amounts to saying
$$j_{U_i}(P)=j_{\ell,i}(j_{U_\ell}(P))\cap N_{j_{U_i}(\kappa)}=j_{\ell,i}(j_{U_\ell}(P)\cap (M_{U_\ell})_\kappa).$$
Once again, since $\crit(j_{U_\ell})=\kappa$, $(M_{U_\ell})_\kappa=(V_0)_\kappa$ and  
$$j_{U_\ell}(P)\cap (M_{U_\ell})_\kappa=j_{U_\ell}(P)\cap (V_0)_\kappa=P.$$
Combining these two latter equations we obtain
$$j_{U_i}(P)=j_{\ell,i}(P),$$
as needed.

The proof for $i=0$ is identical, bearing in mind that if $U_0\in M_{U_{\ell}}$ then also $U_0^2\in M_{U_{\ell}}$ and thus the argument for $W_0$ and $W_{\ell}$ is the same as the one of the previous paragraph, working with the commutative diagram of $U_0^2$  and $U_{\ell}.$ 
\end{proof}
The above completes the proof of the lemma.
\end{proof}

\begin{setup}\label{setup2}
   We denote our new ground model by $V$. Invoking Corollary \ref{Cor:almostCohereSequence} inside $V$ we derive an almost coherent sequence $\vec{U}$ on $\kappa$ of length $\omega_1$ such that $U(\kappa,i)=W_i$ (see Definition~\ref{Def:almostcoherentesequence}). By Changing $\vec{U}$ on a null-set, we may assume that for every measurable cardinal $\alpha<\kappa$,  $U(\alpha,0)$ is a non-normal $\alpha$-complete ultrafilter witnessing the clauses provided in Setup~\ref{Setup}.
\end{setup}

\begin{definition}
For each $i<\omega_1$ 
define $$\textstyle \dom_1(\vec{U}):=\{\eta\leq\kappa\mid o^{\vec{U}}(\eta)>0\}.$$ 
\end{definition}

As in the previous section we begin defining an Easton-supported iteration $$\mathbb{G}_\kappa:=\varinjlim \langle \mathbb{G}_\alpha;\dot{\mathbb{Q}}_\alpha\mid \alpha<\kappa\rangle$$ using the almost coherent sequence $\vec{U}:=\langle \vec{U}(\alpha,\beta)\mid \alpha\leq \kappa,\, \beta<o^{\vec{U}}(\alpha)\rangle$. 

\smallskip
Fix $\alpha\leq \kappa$ and suppose that $\mathbb{G}_\alpha$ has been defined. If $\alpha\notin\dom_1(\vec{U})$  we declare $\mathbb{Q}_\alpha$ to be the trivial forcing. Otherwise, $\alpha\in\dom_1(\vec{U})$ and we have two options: either $\dom_1(\vec{U})\cap \alpha$ is bounded in $\alpha$ or it is not. In the former case, $o^{\vec{U}}(\alpha)=1$ and by standard arguments due to L\'evy and Solovay, $U(\alpha,0)$ extends to a measure $\bar{U}(\alpha,0)$ in $V^{\mathbb{P}_\alpha}$. In this latter case we declare $\mathbb{Q}_\alpha$ to be $\mathbb{T}_{\bar{U}(\alpha,0)}$, the Tree Prikry forcing relative to $\bar{U}(\alpha,0)$.

\smallskip

So, suppose that $\dom_1(\vec{U})\cap \alpha$ is unbounded in $\alpha$. We define  sequences $$\text{$\langle \mathbb{P}(\alpha,\beta)\mid \beta\leq o^{\vec{U}}(\alpha)\rangle$ and $\langle U(\alpha, \beta, t)\mid \beta<o^{\vec{U}}(\alpha),\, t\in\mathcal{C}_{\alpha,\beta}\rangle$}$$ as follows. Let $\mathbb{P}(\alpha,0)$ be the trivial forcing and $U(\alpha,0, \varnothing)$ the measure defined as follows. Let $j^\alpha_0\colon V\rightarrow N^\alpha_0$ be the ultrapower embedding by $U(\alpha,0)$. By coherency, $j^\alpha_0(\vec{U})\restriction\alpha+1=\vec{U}\restriction (\alpha,0)=\varnothing$. Hence, $j^\alpha_0(\mathbb{G}_\alpha)$ factors as $$\mathbb{G}_\alpha\ast \{\varnothing\}\ast \mathbb{G}_{(\alpha,j^\alpha_0(\alpha))}.$$ The tail forcing $\mathbb{G}_{(\alpha,j^\alpha_0(\alpha))}$ has the Prikry property and is $\alpha^{++}$-closed with respect to the $\leq^*$-order. Standard arguments  allow us to  produce an extension $U(\alpha,0,\varnothing)$ of $U(\alpha,0)$ in $V^{\mathbb{G}_\alpha}$. Note that $U(\alpha,0,\varnothing)$ extends the club filter $\mathrm{Cub}_\alpha$ as computed in $V^{\mathbb{G}_\alpha}$: Indeed, $U(\alpha,0)$ extends the $V$-club filter and $\mathbb{P}_\alpha$ is $\alpha$-cc (so every $V^{\mathbb{G}_\alpha}$-club contains a $V$-club). To make the forthcoming construction work smoothly we follow Gitik's ideas \cite[\S3]{GitikNonStationary} 
and define $U(\alpha,0,\varnothing)$ relative to a fix well-ordering of a large-enough fragment of the set-theoretic universe. More precisely, we define $U(\alpha,0,\varnothing)$ analogously to $\overline{W}_{\alpha,0}$ in page~\pageref{BlanketAssumptions}.

\smallskip

Suppose that both $\mathbb{P}(\alpha,\bar{\beta})$ and $U(\alpha,\bar{\beta}, t)$ have been constructed for all $\bar{\beta}<\beta\leq o^{\vec{U}}(\alpha)$ in $V^{\mathbb{G_\alpha}}$. To proceed we need the notion of $\beta$-coherency:
\begin{definition}
    A sequence $t=\langle \xi_0,\dots,\xi_k\rangle\in [\alpha]^{<\omega}$ is $\beta$-coherent if
    \begin{enumerate}
        \item $t$ is increasing;
        \item $o^{\vec{U}}(\xi_i)<\beta$ for all $i<k;$
        \item for all $i<k$ let $i^*\leq i$ be the first index such that $o^{\vec{U}}(\xi_j)<o^{\vec{U}}(\xi_i)$ for all $i^*\leq j<i$. Then, $b_{\xi_i}$ end-extends $\bigcup_{i^*\leq j<i}(b_{\xi_j}\cup\{\xi_j\}).$ Where $b_{\xi_j}$ is the generic sequence added by $\mathbb{P}(\xi_j,o(\xi_j))$ over $V^{\mathbb{G}_{\xi_j}}$. 
    \end{enumerate}
    Denote by $\mathcal{C}_{\alpha,\beta}$ the collection of all $\beta$-coherent sequences in $[\alpha]^{<\omega}$. Given $t, s\in \mathcal{C}_{\alpha,\beta}$ we say that $t$ and $s$ are \emph{equivalent} if $b_t=b_s$ where
    $$\textstyle b_r:=\bigcup_{\xi\in r} b_\xi\;\text{for $r\in \{t,s\}$}.$$

    For each $\bar{\beta}<\beta$ denote by $t\restriction \bar{\beta}$ the following sequence: If $o^{\vec{U}}(\max(t))\geq \bar{\beta}$ then $t\restriction \bar\beta:=\varnothing$. Otherwise, let $i^*<|t|$ be the first index with $o^{\vec{U}}(\xi_j)<\bar{\beta}$ for all $i^*\leq j<i$ and set $t\restriction \bar{\beta}:=\langle\xi_{i^*}, \dots, \xi_{|t|-1}\rangle.$
\end{definition}

We can now define the poset $\mathbb{P}(\alpha,\beta)$:

\begin{definition}\label{Gitikforcing}
    A condition in $\mathbb{P}(\alpha,\beta)$ is a pair $\langle t, T\rangle$ where:
    \begin{enumerate}
    	\item  $t\in\mathcal{C}_{\alpha,\beta}$;
    	\item  $T$ is a tree on $[\alpha]^{<\omega}$ with trunk $\varnothing \in T$;
    	\item  $t{}^\smallfrown s$ is $\beta$-coherent for all $s\in T$, $\Succ_{T}(s)=\bigcup_{\bar{\beta}<\beta} \Succ_{T,\bar{\beta}}(s)$ and
    	$$\mathrm{Succ}_{T,\bar{\beta}}(s)\in U(\alpha,\bar\beta, (t{}^\smallfrown s)\restriction \bar{\beta}).$$ 
    \end{enumerate}

    Given $\langle t, T\rangle, \langle s, S\rangle\in \mathbb{P}(\alpha,\beta)$  write $\langle s,S\rangle \leq^* \langle t, T\rangle$ iff $t=s$ and $T\s S$.
    
    Also, say that $\langle t, T\rangle$ and $\langle s,S\rangle$ are \emph{equivalent} if $b_t=b_s$ and $T=S$. 
\end{definition}
\begin{remark}
	Note that, formally speaking, $\Succ_{T,\bar\beta}(\cdot)$ depends also on the entire condition $\langle t, T\rangle$. To avoid overcomplicated notations we shall keep denoting the set of successors in that way, in place of $\Succ_{\langle t, T\rangle,\bar{\beta}}(\cdot)$.
\end{remark}

\begin{definition}[Minimal extensions]
	For $\langle t, T\rangle\in \mathbb{P}(\alpha,\beta)$ and $\langle \nu\rangle\in T$,  $$\langle t, T\rangle\cat\langle \nu\rangle:=\langle t{}^\smallfrown \langle\nu\rangle, T_{\langle\nu\rangle}\setminus V_{\nu+1}\rangle.$$ As customary, $T_{\langle\nu\rangle}:=\{s\in T\mid \langle \nu\rangle{}^\smallfrown s\in T\}.$
	
	In general for $\vec\nu \in T$ define $\langle t, T\rangle\cat\vec\nu$ by recursion on the length of $\vec\nu$.
\end{definition}
The standard order of $\mathbb{P}(\alpha,\beta)$ is defined as a combination of $\leq^*$ and $\cat\vec\nu$:
\begin{definition}
	For $\langle t, T\rangle, \langle s, S\rangle\in\mathbb{P}(\alpha,\beta)$ write $\langle t, T\rangle\leq \langle s, S\rangle$ if and only if there is $\vec\nu\in S$ such that $\langle t, T\rangle$ is equivalent to a $\leq^*$-extension of $\langle s{}, S\rangle\cat\vec\nu$.
\end{definition}
\begin{remark}
	If $\langle t, T\rangle$ and $\langle s, S\rangle$ are equivalent then $\langle t, T\rangle\leq \langle s,S\rangle$ and $\langle s, S\rangle\leq \langle s,S\rangle$. Thus both conditions force the same information. 
\end{remark}
Next, we define the measures $\langle U(\alpha,\beta, t)\mid t\in \mathcal{C}_{\alpha,\beta}\rangle$ as follows:
\begin{definition}
    For each $t\in \mathcal{C}_{\alpha,\beta}$,   define $$U(\alpha,\beta,t):=\{(\dot{X}_\alpha)_{G_\alpha}\mid \exists p\in G_\alpha\,\exists\dot{T}\; (p^\smallfrown \{\langle t, \dot{T}\rangle\}^\smallfrown p_\gamma\forces_{j^\alpha_\beta(\mathbb{G}_\alpha)}\alpha\in j^\alpha_\beta(\dot{X}_\gamma))\},$$ 
	where $\langle \dot{X}_\gamma\mid \gamma<\alpha^+\rangle$ and $\langle p_\gamma\mid\gamma<\alpha^+\rangle$ are as in \cite[\S3]{GitikNonStationary} and $j^\alpha_\beta$ denotes the ultrapower embedding by $U(\alpha,\beta)$. 
\end{definition}

The above completes the inductive definition of $$\text{$\mathbb{P}(\alpha,\beta)$ and $\langle U(\alpha,\beta, t)\mid t\in \mathcal{C}_{\alpha,\beta}\rangle$}$$ for all $\alpha\leq \kappa$ and $\beta\leq o^{\vec{U}}(\alpha)$. Finally let $\dot{\mathbb{Q}}_\alpha$  a $\mathbb{G}_\alpha$-name for $\mathbb{P}(\alpha, o^{\vec{U}}(\alpha)).$

\smallskip

Gitik showed  that $\langle \mathbb{P}(\alpha, o^{\vec{U}}(\alpha)),\leq,\leq^*\rangle$ is a Prikry-type forcing \cite[Lemma~3.11]{GitikNonStationary}. It is also easy to show that $\mathbb{P}(\alpha, o^{\vec{U}}(\alpha))$ is $\alpha^+$-cc and that $\langle  \mathbb{P}(\alpha, o^{\vec{U}}(\alpha)),\leq^*\rangle$ is an $\alpha^+$-closed forcing. Thus, forcing with $ \mathbb{P}(\alpha, o^{\vec{U}}(\alpha))$ does not collapse cardinals. However, forcing with $\mathbb{P}(\alpha, o^{\vec{U}}(\alpha))$ adds a cofinal sequence to $\alpha$ with order-type  $\omega^{o^{\vec{U}}(\alpha)}$. As a result this forcing changes the cofinality of $\alpha$ -- details are provided below.

\begin{definition}\label{GenericObject}
    Let $H\subseteq \mathbb{P}(\alpha,o^{\vec{U}}(\alpha))$ a $V[G_\alpha]$-generic. Define 
    $$\textstyle b_\alpha:=\bigcup\{ b_\beta\mid \exists \l t,T\r\in H, \   \beta\in t\}.$$
\end{definition}
Note that if $\l t,T\r\leq \l s,S\r$ then $s$ is equivalent to an initial segment of $t$ and therefore $b_t$ end-extends $b_s$. It follows that for each $\l t,T\r\in H$, $b_\alpha$ end-extends $b_t$. Arguing inductively, one can now prove that $b_\alpha$ is a club with $\mathrm{otp}(b_\alpha)=\omega^{o^{\vec{U}}(\alpha)}$. It follows that the cofinality of $\alpha$ in $V[G_\alpha]$ is determined by this order-type, and in particular we have the following:
\begin{corollary}
    Let $G_\kappa$ be $V$-generic for $\mathbb{P}_\kappa$ and let $G$ be $V^*=V[G_\kappa]$-generic for $\mathbb{P}(\kappa,\omega_1)$. Then $\cf^{V^*[G]}(\kappa)=\omega_1$.
\end{corollary}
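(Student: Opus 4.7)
The strategy is immediate from the inductive claim preceding the corollary: forcing with $\mathbb{P}(\kappa,o^{\vec{U}}(\kappa))=\mathbb{P}(\kappa,\omega_1)$ over $V^*$ introduces a club $b_\kappa\subseteq\kappa$ with $\mathrm{otp}(b_\kappa)=\omega^{o^{\vec{U}}(\kappa)}=\omega^{\omega_1}$ (ordinal exponentiation). Since $b_\kappa$ is cofinal in $\kappa$, we get
\[
\cf^{V^*[G]}(\kappa)=\cf^{V^*[G]}(\mathrm{otp}(b_\kappa))=\cf^{V^*[G]}(\omega^{\omega_1}).
\]
Thus the proof reduces to two verifications: $(\mathrm{i})$ that $\cf(\omega^{\omega_1})=\omega_1$ in any ambient model, and $(\mathrm{ii})$ that the symbol ``$\omega_1$'' means the same thing in $V^*$ and in $V^*[G]$; equivalently, that $\omega_1^{V^*}$ is still a regular cardinal after forcing with $\mathbb{P}(\kappa,\omega_1)$.

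For $(\mathrm{i})$ one simply observes $\omega^{\omega_1}=\sup_{\alpha<\omega_1}\omega^{\alpha}$ and that this supremum cannot be attained by a sequence of length less than $\omega_1$ (otherwise it would be $\omega^\beta$ for some $\beta<\omega_1$). Hence $\cf(\omega^{\omega_1})=\cf(\omega_1)=\omega_1$, computed in whichever model $\omega_1$ is a regular cardinal.

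For $(\mathrm{ii})$ I would invoke the properties of $\mathbb{P}(\kappa,\omega_1)$ already noted in the excerpt: it is a Prikry-type forcing, the direct extension order $\leq^*$ is $\kappa^+$-closed, and the forcing itself is $\kappa^+$-cc. In particular, $\langle\mathbb{P}(\kappa,\omega_1),\leq^*\rangle$ is  $\omega_2$-closed (indeed $\kappa^+$-closed), so $\leq^*$ adds no new subsets of $\omega_1$. Combined with the Prikry property this implies that $\mathbb{P}(\kappa,\omega_1)$ adds no new bounded subsets of $\kappa$ at all, so every cardinal $\leq \kappa$ is preserved; in particular $\omega_1^{V^*}=\omega_1^{V^*[G]}$ and it remains regular.

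Combining $(\mathrm{i})$ and $(\mathrm{ii})$ with the displayed equation yields $\cf^{V^*[G]}(\kappa)=\omega_1$, as required. The main potential subtlety is ensuring $\omega_1$ is preserved; but once one has the standard Prikry-type machinery for $\mathbb{P}(\kappa,\omega_1)$ recalled just before the corollary, this is automatic. No further calculation is needed.
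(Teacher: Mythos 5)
Your proposal is correct and follows essentially the same route the paper takes (the paper's proof is left implicit, but it is exactly the observation that the generic club $b_\kappa$ has order type $\omega^{\omega_1}$, together with the preservation facts stated just before the corollary). You have simply made the two needed verifications explicit: that $\cf(\omega^{\omega_1})=\omega_1$ as a piece of ordinal arithmetic, and that $\omega_1$ is preserved because the Prikry property plus $\kappa^+$-closure of $\leq^*$ give that $\mathbb{P}(\kappa,\omega_1)$ adds no new bounded subsets of $\kappa$.

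One tiny informal glitch in step $(\mathrm{i})$: a cofinal sequence in $\omega^{\omega_1}$ of length $<\omega_1$ would not itself have supremum $\omega^\beta$ for some $\beta<\omega_1$; rather, each of its entries lies below some $\omega^{\alpha_i}$ with $\alpha_i<\omega_1$, and regularity of $\omega_1$ puts $\sup_i\alpha_i=:\beta<\omega_1$, so the supremum of the sequence is at most $\omega^\beta<\omega^{\omega_1}$, a contradiction. This is what you mean, and it is what the continuity and normality of $\alpha\mapsto\omega^\alpha$ encode, but the phrasing as written is slightly off.
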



\smallskip

Let $G_\kappa$ be $V$-generic for $\mathbb{P}_\kappa$ and let $V^*=V[G_\kappa]$. By definition of the iteration $\mathbb{P}_\kappa$, for every $\alpha\in \dom_1(\vec{U})$, we have a $V[G_\alpha]$-generic sequence $b_\alpha$ for $\mathbb{P}(\alpha,o^{\vec{U}}(\alpha))$. Note that every $\xi\in b_\alpha$ with $\vec{U}$-order $0$ is in some $b_\gamma$ for $\gamma\leq\alpha$ with $o^{\vec{U}}(\gamma)=1$, and by definition, $b_\gamma$ is generic for $\mathbb{T}_{\bar{U}(\alpha,0)}$. It follows that $f_\xi$ (the $V_0$-generic functions for $\Add(\xi,1)$) is defined. In particular, we may assume that if $C_G=\l \kappa_i\mid i<\omega_1\r$ is a $V^*$-generic filter for $\mathbb{P}(\kappa,\omega_1)$, then for every $i<\omega_1$, $\kappa_{i+1}\in Y_0$. And in particular $f_{\kappa_{i+1}}:\kappa_{i+1}\rightarrow \kappa_{i+1}$.
\begin{theorem}\label{GitikProjects}
Let $G\subseteq \mathbb{P}(\kappa,\omega_1)$ be $V^*$-generic and let $C_G=\l \kappa_i\mid i<\omega_1\r$ be the generic club sequence. Then, 
$$\textstyle f^*:=f_0\restriction\kappa_0\cup\bigcup_{i<\omega_1}f_{\kappa_{i+1}}\restriction [\kappa_i,\kappa_{i+1})$$ is $V^*$-generic for $\Add(\kappa,1)^{V^*}$. 
\end{theorem}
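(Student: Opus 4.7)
The plan is to mirror the strategy of Lemma~\ref{Genericityincase2}, transferring the argument from the $\omega^2$-setting to the $\omega_1$-setting. Fix a maximal antichain $\mathcal{A}\in V^*$ for $\mathrm{Add}(\kappa,1)^{V^*}$ and define in $V^*$ a function $f\colon \kappa\to\kappa$ by
$$\textstyle f(\alpha):=\sup_{p\in \mathrm{Add}(\alpha,1)^{V^*}}\beta(p),$$
where $\beta(p)$ is the least ordinal such that $p$ is compatible with some element of $\mathcal{A}\cap \mathrm{Add}(\beta(p),1)^{V^*}$. Let $C\in V^*$ be the club of closure points of $f$. At any $\kappa_{i+1}\in C$ the set $\mathcal{A}\cap \mathrm{Add}(\kappa_{i+1},1)^{V^*}$ is a maximal antichain in $\mathrm{Add}(\kappa_{i+1},1)^{V^*}$.

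Next I would argue, using that each measure $U(\kappa,\beta,t)$ extends the $V^*$-club filter (exactly as for $\overline{W}_1(t)$ in Lemma~\ref{Genericityincase2}), that $C$ is $U(\kappa,\beta,t)$-large for every $\beta<\omega_1$ and every $t\in\mathcal{C}_{\kappa,\beta}$. Together with Gitik's Mathias-style criterion for $\mathbb{P}(\kappa,\omega_1)$, this forces a tail of the successor stages $\kappa_{i+1}\in C_G$ to lie in $C$. A second ingredient is the analogue of the claim inside Lemma~\ref{Genericityincase2}: the set
$$\{\alpha<\kappa\mid \mathcal{A}\cap \mathrm{Add}(\alpha,1)^{V^*[G_\alpha]}\in V^*[G_\alpha]\}$$
lies in every $U(\kappa,\beta,t)$. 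This uses the $\kappa^+$-cc of $\mathbb{G}_\kappa$ to place a name for $\mathcal{A}$ essentially in $V_\kappa$, together with the definition of $U(\kappa,\beta,t)$ via $j^\kappa_\beta$; since $j^\kappa_\beta(\dot{\mathcal{A}})\cap V_\kappa=\dot{\mathcal{A}}\in M_{U(\kappa,\beta)}$, the relevant reflection goes through. This pins down cofinally many $\kappa_{i+1}\in C_G\cap C$ at which both the antichain is maximal and lives in $V^*[G_{\kappa_{i+1}}]$; by construction of the preparation, such $\kappa_{i+1}$ have $o^{\vec{U}}(\kappa_{i+1})=1$, so the stage of the iteration at $\kappa_{i+1}$ is a Tree-Prikry forcing $\mathbb{T}_{\overline{U}(\kappa_{i+1},0)}$ meeting the assumptions of Setup~\ref{Setup}.

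Finally, by the very preparation in Setup~\ref{Setup} combined with the term-space argument of Lemma~\ref{Genericityincase2}, the function $f_{\kappa_{i+1}}$ amalgamated from the Tree-Prikry sequence $b_{\kappa_{i+1}}$ is $V^*[G_{\kappa_{i+1}}]$-generic for $\mathrm{Add}(\kappa_{i+1},1)^{V^*[G_{\kappa_{i+1}}]}$. Since $f^*\restriction\kappa_{i+1}$ and $f_{\kappa_{i+1}}$ agree on $[\kappa_i,\kappa_{i+1})$ and differ only on the bounded piece $[0,\kappa_i)$, the former is a bounded modification of the latter; Cohen genericity being preserved under such modifications, $f^*\restriction\kappa_{i+1}$ meets $\mathcal{A}\cap \mathrm{Add}(\kappa_{i+1},1)^{V^*[G_{\kappa_{i+1}}]}$, and hence $f^*$ meets $\mathcal{A}$.

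The principal obstacle is the intermediate-model claim in the second paragraph: verifying that $\mathcal{A}\cap \mathrm{Add}(\alpha,1)^{V^*[G_\alpha]}\in V^*[G_\alpha]$ on a $U(\kappa,\beta,t)$-large set, for every $\beta<\omega_1$ and every coherent $t$. Unlike the $\omega^2$-case, here one must control this simultaneously across all $\omega_1$-many levels of the coherent sequence $\vec{U}$, deal with the non-normal bottom measure $W_0$ in contrast with the normal $W_\beta$ for $\beta\geq 1$, and ensure the cofinal and well-ordering-based choices made in constructing the extensions $\overline{U}(\alpha,\bar{\beta})$ and $U(\kappa,\beta,t)$ are compatible with the reflection step. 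Provided this is established, the rest of the proof unfolds as above.
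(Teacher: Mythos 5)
Your proposal has a genuine gap, and it stems from a misidentification of the combinatorial objects $f_{\kappa_{i+1}}$ and of the $\vec{U}$-order of the successor points of $C_G$.

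First, a point $\kappa_{i+1}\in C_G$ with a successor index is a \emph{successor point} of the club, and such points have $o^{\vec{U}}(\kappa_{i+1})=0$, not $1$. Any $\alpha\in C_G$ with $o^{\vec{U}}(\alpha)\geq 1$ is a limit point of $C_G$, since the Gitik iteration $\mathbb{G}_\kappa$ forces non-trivially at $\alpha$ and produces the cofinal sequence $b_\alpha$ sitting inside $C_G$. Consequently there is no Tree-Prikry stage $\mathbb{T}_{\overline{U}(\kappa_{i+1},0)}$ at $\kappa_{i+1}$ and no Prikry sequence $b_{\kappa_{i+1}}$ to amalgamate, so the appeal to Setup~\ref{Setup} at these points does not apply. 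Second, and more fundamentally, the functions $f_{\kappa_{i+1}}$ appearing in the statement of the theorem are the Cohen generics introduced by the \emph{preparatory lottery iteration} $\mathbb{P}_\kappa$ at the stage $\kappa_{i+1}$; these are already members of $V=V_0[G]\subseteq V^*$, hence a fortiori of $V^*[G_{\kappa_{i+1}}]$, and so are certainly not $V^*[G_{\kappa_{i+1}}]$-generic. Your final paragraph asserts that $f^*\restriction\kappa_{i+1}$ is a bounded modification of a $V^*[G_{\kappa_{i+1}}]$-generic, but the ``generic'' $f_{\kappa_{i+1}}$ is actually in the ground model, so the bounded-modification step closes nothing. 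This is precisely why a direct, non-inductive pass at $\kappa$ is not what the paper does.

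The paper instead proves, by induction on $\gamma=o^{\vec{U}}(\alpha)\in[1,\omega_1]$, that $f^*_\alpha$ (the amalgamated function built along $C_G\cap \alpha$) is $V[G_\alpha]$-generic for $\Add(\alpha,1)^{V[G_\alpha]}$ for every $\alpha\in C_G\cup\{\kappa\}$ with that order. The base case $\gamma=1$ is where your ingredients legitimately appear: $\mathbb{P}(\alpha,1)$ is Tree-Prikry with $U(\alpha,0,\varnothing)$, the Mathias criterion yields $V$-genericity of the Prikry sequence, and the term-space argument pushes the amalgamated Cohen generic up to $V[G_\alpha]$. In the inductive step one fixes $\mathcal{A}\in V[G_\alpha]$, forms the club $C$ of closure points and the measure-one set $X$ of $\beta$ with $\mathcal{A}\cap\Add(\beta,1)^{V[G_\beta]}\in V[G_\beta]$, argues (via the subclaim using $j^\alpha_\gamma$ and the $\kappa$-cc of $\mathbb{G}_\alpha$) that $C\cap X\in\bigcap_{t}U(\alpha,\gamma,t)$, so a tail of $b_\alpha$ lies in $C\cap X$, and then uses the induction hypothesis at such $\beta$: $f^*_\beta$ is $V[G_\beta]$-generic, and $f^*_\alpha\restriction\beta$ is a bounded modification of it by a function in $V[G_\beta]$, hence also generic, hence meets $\mathcal{A}\cap\Add(\beta,1)^{V[G_\beta]}\subseteq\mathcal{A}$. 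You correctly reproduce the intermediate-model subclaim and the closure-point club; what is missing is the recursion that supplies genericity of the partial amalgamations $f^*_\beta$ at the intermediate $\beta$ of higher $\vec{U}$-order, without which you have no generic object to compare $f^*$ against.
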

\begin{proof}
Let us denote by $\mathrm{Succ}(C_G)$ the increasing sequence of successor points of $C_G$; namely $\langle \kappa_{i+1}\mid i<\omega_1\rangle$. For each $\alpha\in C_{G}\cup \{\kappa\}$ define
$$\textstyle f^*_\alpha:=f_0\restriction \kappa_0\cup \bigcup_{\beta\in\Succ(C_G)\cap\alpha}f_\beta\restriction[\beta^-,\beta)$$ where $\beta^{-}$ stands for the predecessor of $\beta$ in $C_G$. 

We will show that for every $\alpha\in C_G\cup\{\kappa\}$, $f^*_\alpha$ is $V[G_\alpha]$-generic for $\Add(\alpha,1)^{V[G_\alpha]}$. In particular,  $f^*=f^*_\kappa$ will be generic over $V^*=V[G_\kappa]$. 

The proof is by induction on $0<\gamma\leq\omega_1$, and the induction step is proved for all $\alpha\in C_G\cup\{\kappa\}$ with  $o^{\vec{U}}(\alpha)=\gamma$. 
For $o^{\vec{U}}(\alpha)=1$, $\mathbb{P}(\alpha,1)$ is just the Tree-Prikry forcing with $U(\alpha,0,\varnothing)\supseteq U(\alpha,0)$. In this case note that $$f^*_\alpha=f^*_{\alpha^*}\cup\bigcup_{ i<\omega}f_{\alpha_{i}}\restriction [\alpha_{i-1},\alpha_{i}),$$ where $\l \alpha_i\mid i<\omega\r$ is the Prikry sequence $b_\alpha$ added by $\mathbb{P}(\alpha,1)$, and $\alpha^*\in C_G\cap \alpha$ is the last  ordinal such that $o^{\vec{U}}(\alpha^*)\geq 0^{\vec{U}}(\alpha)$ and $o^{\vec{U}}(\beta)<o^{\vec{U}}(\alpha)$ for all $\beta\in C_G\cap (\alpha^*,\alpha)$. The sequence $\langle \alpha_n\mid n<\omega\rangle$ is also $V_0$-generic for $U(\alpha,0)$ (by the Mathias criterion) {and by the construction of $U(\alpha,0)$,}  $$\bigcup_{ n<\omega} f_{\alpha_n}\restriction [\alpha_{n-1},\alpha_n)$$  is $\Add(\alpha,1)^{V_0[G\restriction\alpha]}$-generic (see Setup~\ref{setup2}). Since $V$ is a generic extension of $V_0[G\restriction\alpha]$ by an $\alpha^+$-closed forcing (namely, the tail of the preliminary lottery iteration), this function is also generic for $\Add(\alpha,1)^V$. 
\begin{claim}
    $\bigcup_{ n<\omega} f_{\alpha_n}\restriction [\alpha_{n-1},\alpha_n)$
is  
 a $V[G_{\alpha}]$-generic for  $\Add(\alpha,1)^{V[G_{\alpha}]}$. 
 
 In particular,  $f^*_\alpha$ is $V[G_\alpha]$-generic for $\Add(\alpha,1)^{V[G_\alpha]}$.
\end{claim}
 \begin{proof}[Proof of Claim.] First we note that $V[G_\alpha]$ is a forcing extension of $V$  by $\mathbb{G}_{\alpha}$, which is an $\alpha$-c.c forcing of size $\alpha$. It follows that $\Add(\alpha,1)^V$ is isomorphic to the term-space forcing $\mathbb{A}(\mathbb{G}_\alpha, \dot{\Add}(\alpha,1))$ (see \cite[p.9]{CumGCH}). Thus, modulo isomorphism, $\bigcup_{ n<\omega} f_{\alpha_n}\restriction [\alpha_{n-1},\alpha_n)$ is $V$-generic for this latter poset. By standard arguments about the term space forcing, $\bigcup_{ n<\omega} f_{\alpha_n}\restriction [\alpha_{n-1},\alpha_n)$ and $G_{\alpha}$ together induce a $V[G_{\alpha}]$-generic filter for $\Add(\alpha,1)^{V[G_{\alpha}]}$.

 For the last claim, $f^*_\alpha$ is a bounded modification of $\bigcup_{ n<\omega} f_{\alpha_n}\restriction [\alpha_{n-1},\alpha_n)$ using a function in $V[G_\alpha]$ (i.e. $f^*_{\alpha^*}$) and so $f^*_\alpha$ is also $V[G_\alpha]$-generic. 
 \end{proof}

 \smallskip

Let us argue for the general case. Our induction hypothesis is
\begin{equation*}
   \text{$\forall\beta\in C_G\cap \alpha (o^{\vec{U}}(\beta)<o^{\vec{U}}(\alpha)\,\Rightarrow\, f^*_\beta$ is $\Add(\beta,1)^{V[G_\beta]}$-generic over $V[G_\beta])$. }
\end{equation*}

\begin{claim}\label{falphaisgeneric}
    $f^*_\alpha$ is $V[G_\alpha]$-generic.
\end{claim}
\begin{proof}[Proof of claim]
     Let $\mathcal{A}\in V[G_\alpha]$ be a maximal antichain for $\Add(\alpha,1)^{V[G_\alpha]}$.  Consider the function $f\colon\alpha\rightarrow \alpha$ defined in $V[G_\alpha]$ as follows. For each $\beta<\alpha$ and $p\in \Add(\beta,1)^{V[G_\alpha]}$ let $\beta(p)<\alpha$ be the least for which there is a condition  $q_p\in \mathcal{A}\cap \Add(\beta(p),1)^{V[G]}$ compatible with $p$. Set $$\textstyle f(\beta):=\sup_{p\in \Add(\beta,1)^{V[G_\alpha]}}\beta(p).$$
      Let $C$ be the club of closure points of $f$. 
     Note that for each $\beta<\alpha$ regular, $$\mathcal{A}\cap \Add(\beta,1)^{V[G_\alpha]}=\mathcal{A}\cap \Add(\beta,1)^{V[G_{\beta}]},$$ and no bounded subsets of $\alpha$ are introduced by the forcing passing from $V[G_\beta]$ to $V[G_\alpha]$.  Clearly, if $\beta\in C$ then $\mathcal{A}\cap \Add(\beta,1)^{V[G_{\beta}]}$ is a maximal antichain for $\Add(\beta,1)^{V[G_{\beta}]}$. Let us prove that for a measure-one set of $\beta$'s, $\mathcal{A}\cap \Add(\beta,1)^{V[G_{\beta}]}\in V[G_{\beta}]$. Once this is established we will  be mostly done.


    \begin{sclaim}
$X=\{\nu<\alpha\mid \mathcal{A}\cap \Add(\nu,1)^{V[G_\nu]}\in V[G_\nu]\}\in U(\alpha,\gamma,t)$ for all $\gamma<o^{\vec{U}}(\alpha)$ and all $\gamma$-coherent sequence $t\in [\alpha]^{<\omega}$.
    \end{sclaim}
    \begin{proof}[Proof of subclaim]
    Fix an arbitrary $t$. Let $\dot{\mathcal{A}}$ and $\dot{X}$ be a $\mathbb{G}_\alpha$-names for $\mathcal{A}$ and the above-displayed set, respectively. Let $p\in G_\alpha$ forcing the above about $\dot{\mathcal{A}}$ and $\dot{X}.$
    We can moreover assume  that $\dot{\mathcal{A}}\s V_\alpha$ - this is possible because $\mathbb{G}_\alpha$ is $\alpha$-cc and $\mathbb{G}_\alpha\s V_\alpha.$ In particular, $j^\alpha_\gamma(\dot{\mathcal{A}})\cap V_\alpha=\dot{\mathcal{A}}\in M_{U(\alpha,\gamma)}$. Thus, $\dot{\mathcal{A}}_{G_\alpha}\in M_{U(\alpha,\gamma)}[G_\alpha]$. This is still true in any generic extension of $M_{U(\alpha,\gamma)}[G_\alpha]$ by $j^\alpha_\gamma(\mathbb{P}_\alpha)/G_\alpha$. Therefore, there are  $q\in G_\alpha$ ($q\leq p$), $\{\langle t, \dot{T}\rangle\}\in \mathbb{P}(\alpha,\gamma)$ and $p_\nu$ such that $$q\cup\{\langle t, \dot{T}\rangle\}\cup p_\nu\forces_{j^\alpha_\gamma(\mathbb{G}_\alpha)}j^\alpha_\gamma(\dot{A})\cap \Add(\alpha,1)^{V[\dot{G}_\alpha]}\in M_{U(\alpha,\gamma)}[\dot{G}_\alpha].$$
    Since $j^\alpha_\gamma(q)=q$ forces the same connection between $j(\dot{\mathcal{A}})$ and $j(\dot{X})$, the above shows that $q\cup\{\langle t, \dot{T}\rangle\}\cup p_\nu$ forces $``\alpha\in j^\alpha_\gamma(\dot{X})$''. By definition, this is the same as saying that $\dot{X}_{G_\alpha}\in U(\alpha,\gamma,t)$.
    \end{proof}
    Since $C\cap X\in \bigcap_{t\in[\kappa]^{<\omega}}U(\alpha,\beta,t)$ it follows that there is $\alpha_0<\alpha$ such that $b_\alpha\setminus\alpha_0\s C$. For each $\beta\in b_\alpha\setminus \alpha_0$, 
    $\mathcal{A}\cap\Add(\beta,1)^{V[G_\beta]}\in V[G_\beta]$ is a maximal antichain. Hence, $\mathcal{A}\cap\Add(\beta,1)^{V[G_\beta]}$ must include a restriction of the function $f^*_\alpha\restriction\beta$, as this function is a bounded modification of $f^*_{\beta}$ which is $V[G_{\beta}]$-generic by the induction hypothesis. All in all, $\mathcal{A}$ includes a restriction of $f^*_\alpha$ and we are done.
    \end{proof}
    The proof of Claim~\ref{falphaisgeneric} completes the inductive verification and establishes the proof of Theorem~\ref{GitikProjects}.
    \end{proof}
    \begin{corollary}
        Working in $V^*$, $\mathbb{P}(\kappa,\omega_1)$ projects onto $\Add(\kappa,1)^{V^*}.$
    \end{corollary}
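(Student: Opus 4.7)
The plan is to extract the corollary directly from Theorem~\ref{GitikProjects} by invoking the standard equivalence between ``adding a $V^*$-generic filter'' and ``projecting onto'' in the context of forcing posets. Specifically, recall that a forcing $\mathbb{P}$ projects onto $\mathbb{Q}$ (over $V^*$) if and only if there is a $\mathbb{P}$-name $\dot{H}$ for a $V^*$-generic filter on $\mathbb{Q}$; equivalently, working in $V^*$, there is a map $\pi\colon \mathbb{P}\to \mathrm{r.o.}(\mathbb{Q})$ (the regular-open algebra) that is order-preserving and maps maximal antichains to maximal antichains.

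First I would introduce the canonical $\mathbb{P}(\kappa,\omega_1)$-name $\dot f^*$ for the function
\[
f_0\restriction \kappa_0 \;\cup\; \bigcup_{i<\omega_1}f_{\kappa_{i+1}}\restriction [\kappa_i,\kappa_{i+1}),
\]
where $\langle \kappa_i\mid i<\omega_1\rangle$ names the increasing enumeration of the generic club $\dot{C}_{\dot G}$ and each $f_\xi\in V^*$ is the $V_0$-generic function for $\mathrm{Add}(\xi,1)$ supplied by the preparatory iteration (as discussed right before Theorem~\ref{GitikProjects}). This $\dot f^*$ is a legitimate $\mathbb{P}(\kappa,\omega_1)$-name in $V^*$ since the ingredients depend only on the generic club of $\mathbb{P}(\kappa,\omega_1)$ together with objects in $V^*$.

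Next I would apply Theorem~\ref{GitikProjects}, which asserts precisely that $\one_{\mathbb{P}(\kappa,\omega_1)}\Vdash \dot f^*$ is $V^*$-generic for $\Add(\kappa,1)^{V^*}$.  From $\dot f^*$ one then defines, in $V^*$, a $\mathbb{P}(\kappa,\omega_1)$-name $\dot H$ for the filter on $\Add(\kappa,1)^{V^*}$ generated by $\dot f^*$; namely, $\dot H:=\{\check q\mid q\subseteq \dot f^*\}$. The theorem guarantees that $\dot H$ is forced to be a $V^*$-generic filter for $\Add(\kappa,1)^{V^*}$. Translating this into a projection map, for each condition $p\in\mathbb{P}(\kappa,\omega_1)$ we set
\[
\pi(p):=\bigvee\{q\in \Add(\kappa,1)^{V^*}\mid p\Vdash \check q\in \dot H\},
\]
the supremum being taken in $\mathrm{r.o.}(\Add(\kappa,1)^{V^*})$. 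Standard boolean-valued arguments then show $\pi$ is order- and antichain-preserving, hence a projection in the usual sense.

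The only step that is not entirely formal is checking that $\dot f^*$ is indeed forced to be a function with domain $\kappa$ and size ${<}\kappa$-many conditions dense below every element — but this is immediate from $\cf^{V^*[G]}(\kappa)=\omega_1$ and the fact that the pieces $f_{\kappa_{i+1}}\restriction[\kappa_i,\kappa_{i+1})$ coherently tile $\kappa$. Thus there is no real obstacle: the corollary is essentially a rephrasing of Theorem~\ref{GitikProjects} via the equivalence between ``introducing a generic'' and ``projecting onto.''
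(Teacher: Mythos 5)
Your proposal is correct and is essentially the intended argument: the paper gives no separate proof because the corollary follows immediately from Theorem~\ref{GitikProjects} by the standard equivalence between ``$\mathbb{P}$ introduces a $V^*$-generic filter for $\mathbb{Q}$'' and ``$\mathbb{P}$ projects onto $\mathrm{r.o.}(\mathbb{Q})$''. One small caution in your explicit formula: under the Israeli ordering convention used in the paper, $\pi(p)$ should be the strongest common consequence of $\{q\mid p\Vdash\check q\in\dot H\}$, i.e.\ the Boolean meet $\bigwedge$ rather than $\bigvee$, since enlarging that set should make $\pi(p)$ stronger for $\pi$ to be order-preserving.
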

\section{Further Directions}\label{sec: open questions}
In this last section we should like to draw a few  future directions in which the present work could be applied. Our first proposed direction regards the existence of a minimal \emph{Sacks-like} poset that singularizes a measurable cardinal to  uncountable cofinalities. This (if feasible at all) will be analogous to the main poset devised in \cite{MinimalPrikry}. Thus, we ask:
\begin{question}
    Is there a Prikry-type forcing   that changes the cofinality of a measurable cardinal to $\omega_1$ whose generic extension does not have proper intermediate inner models?
\end{question}
It is not far-fetched that a tree-like variation of the {\color{blue} Gitik forcing} non-normal Magidor/Radin forcing presented here may work in this respect. 

\smallskip

There is another question that regards the preparation of  Lemma~\ref{Preparation} (first described in \cite{TomMaster}). This preparation forces with the lottery sum of $\{\Add(\alpha,1), \{\one\}\}$ for every inaccessible $\alpha<\kappa$ and yields  a non-normal $\kappa$-complete ultrafilter \emph{concentrating on Cohens} (Clause~(2) in Setup~\ref{Setup}). Namely, if $C=\langle \kappa_n\mid n<\omega\rangle$ is a Prikry sequence for the Tree Prikry forcing $\mathbb{T}_U$ then $f_C:=\bigcup_{n<\omega} f_{\kappa_n}\restriction [\kappa_{n-1},\kappa_n)$ is $\Add(\kappa,1)$-generic over $V$ where $f_{\kappa_n}$ are $\Add(\kappa_n,1)$-generics over an inner model of $V$ arising from the  forcing preparation. A natural  inquiry is what can be said about $f_C$ and $f_{C´}$ whenever $C$ and $C'$ are mutually generic $\mathbb{T}_U$-generic sequences. 
\begin{question}
Suppose $C_1,C_2$ partition $C$ into two infinite sets, are $f_{C_1},f_{C_2}$ mutually generic over $V$?
\end{question}


Similar techniques to the ones developed in this paper permitt to construct Mitchell increasing sequences with several non-normal ultrafilters, each of which concentrating on Cohens. 
\begin{question}
    Suppose that $U_0\triangleleft U_1$ are two non-normal ultrafilters on $\kappa$ concentrating on Cohens. What is the relation between the corresponding Cohen generic functions? 
\end{question}

\bibliographystyle{alpha}
\bibliography{citations}
\end{document}